\newcommand{\Z}{\mathbb{Z}}
\newcommand{\Q}{\mathbb{Q}}
\newcommand{\R}{\mathbb{R}}
\newcommand{\T}{\mathbb{T}}
\newcommand{\F}{\mathbb{F}}
\newcommand{\spinc}{\text{spin}^{c}}
\theoremstyle{plain}
\newtheorem{theorem}{Theorem}[section]
\newtheorem{conjecture}[theorem]{Conjecture}
\newtheorem{lemma}[theorem]{Lemma}
\newtheorem{corollary}[theorem]{Corollary}
\theoremstyle{definition}
\newtheorem{definition}[theorem]{Definition}
\newtheorem{proposition}[theorem]{Proposition}
\theoremstyle{remark}
\newtheorem*{remark}{Remark}
\begin{document}

\title{Thin knots and the Cabling Conjecture}

\author{Robert DeYeso III}

\begin{abstract}
The Cabling Conjecture of González-Acuña and Short holds that only cable knots admit Dehn surgery to a manifold containing an essential sphere. We approach this conjecture for thin knots using Heegaard Floer homology, primarily via immersed curves techniques inspired by Hanselman's work on the Cosmetic Surgery Conjecture. We show that almost all thin knots satisfy the Cabling Conjecture, with possible exception coming from a (conjecturally non-existent) collection of thin, hyperbolic, L-space knots. This result serves as a reproof that the Cabling Conjecture is satisfied by alternating knots.
\end{abstract}

\maketitle

\section{Introduction}

For a knot $K$ in $S^3$, let $S^3_r(K)$ denote $r$-sloped Dehn surgery along $K$. If $S^3_r(K)$ is a reducible manifold, meaning it contains an essential $2$-sphere, we will call $r$ a reducing slope. The primary example of a reducible surgery to keep in mind is when $K$ is the $(p,q)$-cable of some knot $K'$ and $r$ is given by the cabling annulus. In this case, we have $S^3_{pq}(K) \cong L(p,q) \# S^3_{\frac{q}{p}}(K')$. The Cabling Conjecture asserts that this is the only example of a reducible surgery.

\begin{conjecture}[Cabling Conjecture, Gonzalez-Acu\~na -- Short \cite{GAS86}]
If $K$ is a knot in $S^3$ which has a reducible surgery, then $K$ is a cabled knot and the reducing slope is given by the cabling annulus.
\end{conjecture}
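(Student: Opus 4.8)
The plan is to merge the classical constraints on reducible Dehn surgeries with the Heegaard Floer package, in the spirit of Hanselman's immersed-curves attack on the Cosmetic Surgery Conjecture: first cut the possible reducing slopes and connected-sum decompositions down to a short combinatorial list, and then attempt to eliminate each surviving possibility with a Floer-theoretic intersection count.

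\textbf{Step 1 (classical reduction).} By Gordon--Luecke and Gabai's sutured-manifold theory, a reducing slope of a nontrivial knot $K$ is a nonzero integer $n$, and the connected-sum decomposition of $S^3_n(K)$ is severely constrained (Gordon--Luecke, Howie, Sayari, Hoffman--Matignon): there are few prime summands (conjecturally exactly two), none is $S^1\times S^2$, and when $K$ is not a cable the slope obeys $|n|\le 2g(K)-1$. The expected picture is $S^3_n(K)\cong L(p,q)\#Y$ with $p\mid n$ and $|H_1(Y)| = |n|/p$. It thus suffices to show that for every such putative splitting, either no knot realizes it, or the realizing knot is the cable $C_{p,q}(K')$ with $Y\cong S^3_{q/p}(K')$ --- so that $n = pq$ and the reducing slope is the cabling annulus, as claimed.

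\textbf{Step 2 (Floer obstruction).} Since $\widehat{HF}$ of a connected sum is the tensor product of the summands' groups (up to an overall Maslov shift), reducibility forces $\dim\widehat{HF}(S^3_n(K)) = p\cdot\dim\widehat{HF}(Y)$, and the $\spinc$-refined, graded statement pins down the $d$-invariant vector of $S^3_n(K)$. Translating into immersed curves (Hanselman--Rasmussen--Watson), $\widehat{HF}(S^3_n(K))$ is the geometric intersection number of the multicurve $\gamma(K)\subset T^2\sm\{z\}$ with the slope-$n$ line, and the $d$-invariants of $S^3_n(K)$ are read off from the geometry of $\gamma(K)$. The cabling construction is visible here: $\gamma(C_{p,q}(K'))$ is built from $\gamma(K')$ by an explicit operation mirroring the satellite, under which the slope-$n$ pairing genuinely splits as the claimed connected sum. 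The target then becomes: if the slope-$n$ pairing of $\gamma(K)$ has the split $\widehat{HF}$ and $d$-invariants of some $L(p,q)\#Y$, then $\gamma(K)$ already has the cabled shape --- equivalently $CFK^\infty(K)$ is that of a cable knot --- whereupon one invokes, and where necessary extends, the known characterizations of cable knots and their companions in terms of $CFK^\infty$.

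\textbf{The main obstacle.} Step 2 is where the argument stalls for a general knot, and this is exactly why the conjecture is open: it presupposes genuine control over $CFK^\infty(K)$ --- equivalently over $\gamma(K)$ --- for an \emph{arbitrary} $K$, and no such control exists. A general $\gamma(K)$ can be an essentially unconstrained immersed multicurve, the numerical and $d$-invariant data coming from a single reducible surgery are far too coarse to force the cabled configuration, and there is no known substitute for the missing structure. The present paper therefore retreats to \emph{thin} knots, for which $CFK^\infty(K)$ is completely determined by the Alexander polynomial and the signature: $\gamma(K)$ then collapses to a standard bouquet of figure-eight-type components together with a single distinguished component, and on that restricted class Hanselman's intersection-number bookkeeping can be carried through --- leaving as the sole surviving gap the (conjecturally empty) family of thin, hyperbolic, L-space knots, where the distinguished component winds so tightly that the count loses its slack. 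Barring a genuinely new idea for controlling $CFK^\infty$ of arbitrary knots, this thin-knot statement is the strongest honest conclusion.
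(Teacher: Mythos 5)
The statement you were asked to prove is the Cabling Conjecture itself, which the paper states as an open conjecture and does not prove; there is no proof in the paper to compare your proposal against. You have correctly recognized this, and your write-up is an honest account of why no proof currently exists rather than a proof. Your Step 1 accurately reproduces the classical reductions the paper also relies on (Gabai's $r\neq 0$, Gordon--Luecke integrality, the lens-space summand, the bound on the number of summands, and the Matignon--Sayari bound $1<|r|\le 2g(K)-1$ for non-cabled knots), and your Step 2 correctly describes the immersed-curves pairing and the role of $d$-invariants that the paper actually deploys. The obstacle you name --- that for an arbitrary knot there is no control over $CFK^\infty(K)$, equivalently over the immersed multicurve $\gamma(K)$, so the coarse numerical and grading data extracted from a single reducible surgery cannot force the cabled shape --- is precisely why the paper retreats to thin knots, where $CFK^\infty$ is determined by the Alexander polynomial and signature and the curve invariant degenerates to one distinguished component plus figure-eights. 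Your closing observation that even there the argument leaks exactly at thin, hyperbolic, L-space knots matches the paper's Theorem 1.2 and Lemma 5.2.

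The only caveat worth flagging is that your Step 2 slightly oversells what the paper does: the paper never attempts to show that a split $\widehat{\textit{HF}}$ and $d$-invariant profile forces $\gamma(K)$ into a ``cabled shape.'' Instead it runs a purely negative argument --- using relative Maslov grading multisets and the periodicity of Lemma 2.1 to show that the required $\text{spin}^c$-summand repetition cannot occur for thin non-L-space knots --- and never needs to recognize cables in $CFK^\infty$ at all. That distinction does not affect your conclusion, since you are not claiming a proof, but if you were to pursue the strategy you describe, the ``characterize cables via $CFK^\infty$'' step would itself be a substantial open problem layered on top of the one you already identified.
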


The Cabling Conjecture is satisfied by many classes of knots. Torus knots, as cables of the unknot, were shown to satisfy the conjecture in \cite{Mos71}, and satellite knots \cite{Sch90} and alternating knots \cite{MT92} satisfy the conjecture as well. Additionally, genus 1 knots \cite{BZ96}, strongly invertible knots \cite{EMn92}, symmetric knots \cite{HS98}, and knots with low bridge number \cite{Gro16} satisfy the conjecture (for a survey of known results and techniques see \cite{Boy02}.) Since the conjecture is satisfied by torus and satellite knots, it remains to consider hyperbolic knots. Our aim is narrower than this however, as we will look at hyperbolic knots that are considered ``thin" due to the simpler structure of their knot Floer complexes.

We will present knot Floer homology in more detail in Section \ref{sec:background}, but for now recall that $\widehat{\textit{HFK}}(K)$ with coefficients in $\F_2$ is a bigraded vector space with Alexander and Maslov gradings, respectively $A$ and $M$. A knot $K$ is \textit{Floer homologically thin} if the generators of $\widehat{\textit{HFK}}(K)$ all have the same $\delta = A-M$ grading. This family contains alternating knots \cite{OS03c}, and the more generalized quasi-alternating knots \cite{OS05c}. We say $K$ is an $L$-\textit{space knot} if it admits a surgery to a (Heegaard Floer) $L$-space, which is a manifold with the simplest Heegaard Floer homology. Using Heegaard Floer homology via immersed curves techniques, we show that

\begin{theorem}
If a thin, hyperbolic knot $K$ in $S^3$ admits a reducible surgery, then $K$ is an L-space knot and the reducing slope must be $r = 2g(K)-1$ after mirroring $K$ if necessary.
\label{thm:main}
\end{theorem}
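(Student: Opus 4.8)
The plan is to combine two kinds of constraints: topological restrictions on reducible surgeries coming from the work of Gordon--Luecke, Gabai, and others, and Heegaard Floer obstructions that become especially rigid for thin knots. First I would recall the known structural facts: if $S^3_r(K)$ is reducible then $r$ must be an integer (Gordon--Luecke), the knot $K$ must have Seifert genus at least $2$ when it is hyperbolic (since genus-one knots are handled by other means), and a reducing slope satisfies $|r| \le 2g(K)-1$ by a result bounding reducing slopes in terms of genus (Gabai/Howie-type arguments, or Ozsv\'ath--Szab\'o's genus bounds from Floer homology). The upshot is that a reducible surgery on a thin hyperbolic knot is realized by an integer slope $r$ with $|r| \le 2g(K)-1$, and after possibly mirroring we may assume $r = n > 0$.

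Next I would bring in the immersed-curve formalism for $\widehat{\mathit{CFD}}$ of the knot complement, following Hanselman--Rasmussen--Watson and Hanselman's cosmetic surgery work. The invariant $\widehat{\mathit{HF}}(S^3_n(K))$ is computed by counting intersections of the immersed multicurve $\gamma(K)$ with a line of slope $n$ in the punctured torus. If the surgery is reducible, then $S^3_n(K) \cong Y_1 \# Y_2$ with neither summand $S^3$, so $\widehat{\mathit{HF}}(S^3_n(K))$ has rank at least $4$ beyond what a prime manifold would force, and moreover $d$-invariant additivity under connected sum constrains the correction terms. The key point for thinness: a thin knot has $\gamma(K)$ consisting of a single ``nontrivial'' component (the one carrying the $\tau$-invariant, essentially a figure-eight-like arc determined by $\tau(K)$ and $g(K)$) together with a collection of ``trivial'' round-the-puncture circles dictated by the Casson--Walker/Alexander polynomial data. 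I would quantify the intersection number of this curve with slope-$n$ lines and compare with the rank and $d$-invariant obligations of a nontrivial connected sum.

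The heart of the argument is then a pigeonhole/rigidity step: the only way the thin immersed curve can produce a reducible surgery is if the ``trivial'' components are absent or minimal, which forces $\widehat{\mathit{HFK}}(K)$ to be that of an L-space knot, and simultaneously pins the slope to the boundary value $r = 2g(K)-1$ (the surgery slope at which a thin L-space knot's curve is ``most degenerate'' and can split off an $L(2g-1, \ast)$ summand, mimicking the cabling picture). Concretely I would show: (i) if $K$ is thin but not an L-space knot, the shape of $\gamma(K)$ forces $S^3_n(K)$ to have the Floer homology of a prime manifold (or to violate $d$-invariant additivity) for every integer $n$ with $|n|\le 2g-1$, contradicting reducibility; and (ii) if $K$ is a thin L-space knot, only $n = 2g-1$ survives the same analysis. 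Step (ii) leans on the classification of L-space-knot Alexander polynomials and the fact that $2g(K)-1$ is the smallest slope admitting an L-space surgery, which is exactly where a lens-space connected summand can appear.

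The main obstacle I anticipate is making the intersection-number bookkeeping in the punctured torus genuinely sharp rather than merely qualitative: I need that the ``extra rank'' forced by the trivial components of $\gamma(K)$ is strictly incompatible with a connected-sum decomposition at every relevant slope, and handling the $d$-invariant side requires knowing the correction terms of $S^3_n(K)$ precisely enough to detect that $S^3 \# S^3$ is impossible --- this is where the thinness hypothesis does its real work, since for a thin knot the full package $(\widehat{\mathit{HFK}}, \tau, \text{and hence } \gamma(K))$ is determined by the Alexander polynomial and signature, so all these quantities are explicitly computable. A secondary subtlety is confirming that $g(K) \ge 2$ in the hyperbolic thin case and that no reducible surgery can occur at a slope with $|r| \le 1$ or at a non-integer slope, so that the reduction to integer slopes in $[2, 2g-1]$ (up to mirror) is airtight before the Floer machinery is invoked.
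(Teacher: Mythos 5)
Your outline reproduces the paper's starting point (an integral reducing slope, a lens-space summand, and the genus bound $1 < r \leq 2g(K)-1$ --- which is the Matignon--Sayari bound, applicable because a hyperbolic knot is not cabled) and correctly identifies immersed curves for thin knots as the main tool, but the heart of your argument is a gap rather than a proof. The obstruction to reducibility is not rank: a reducible surgery $S^3_r(K) \cong L(p,q) \# Y$ can have small total rank (every reducible surgery on a cable knot is a connected sum of lens spaces, hence an L-space), so your claim that reducibility forces ``rank at least $4$ beyond what a prime manifold would force,'' and your step (i) that a non-L-space thin knot's curve makes every $S^3_n(K)$ look like ``the Floer homology of a prime manifold,'' cannot work as stated. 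What the connected-sum decomposition actually gives is a periodicity constraint across $\text{spin}^c$ structures: since the lens-space summand is an L-space, the K\"unneth formula yields $\widehat{\textit{HF}}(S^3_r(K),[s+k\alpha]) \cong \widehat{\textit{HF}}(S^3_r(K),[s])$ as relatively graded groups, where $k = |H^2(Y)|$ properly divides $r$ (the Hom--Lidman--Zufelt periodicity, Lemma \ref{lem:periodicity}). Your proposal never formulates this, and without it the ``pigeonhole/rigidity step'' has no concrete content.

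The substance of the proof is exactly the verification you defer: for each $\text{spin}^c$ structure one computes the multiset of relative Maslov gradings of intersections of the thin knot's curve with the slope-$r$ lines (via the bigon/grading-arrow formula), and shows by a case analysis on $\tau(K)$ versus $g(K)$, the parity of $\tau(K)$, and the size of $r$ that the summand meeting height $g(K)-1$ has strictly larger width, or incompatible multiplicities, than the others --- contradicting periodicity in all cases except $\tau(K)=g(K)$, $r=2g(K)-1$, where one can only conclude that $K$ is an L-space knot. A few residual cases ($r=2(g(K)-1)$ with $\tau(K) \geq g(K)-2$, and $\tau(K)=g(K)=r=3$) need absolute gradings via $d(S^3_r(K),[s]) = d(L(r,1),[s]) - 2\max\{V_s, V_{r-s}\}$ and $d$-invariant additivity, not the ``$S^3 \# S^3$'' consideration you mention (that manifold is $S^3$ and is already excluded since $H_1(S^3_r(K)) \neq 0$). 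Your step (ii) also asserts more than is true or needed: nothing about the classification of L-space-knot Alexander polynomials pins the slope; $r=2g(K)-1$ emerges only because every smaller slope is excluded by the grading comparison (and, for hyperbolic L-space knots, by Hom--Lidman--Zufelt). So the proposal has the right toolbox but is missing the key lemma (the $\text{spin}^c$ periodicity forced by the lens-space summand) and the entire quantitative grading comparison that constitutes the argument.
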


While stated for thin, hyperbolic knots, this theorem holds more generally for non-cabled knots. This is because we use the Matignon-Sayari genus bound, stated below, allowing us to consider only $r \leq 2g(K)-1$. The case where $r > 2g(K)-1$ can be handled using the techniques in this paper to conclude that $K=T(2,n)$, but perhaps more immediate is the result of Dey that cables of non-trivial knots are not thin \cite{Dey21}. Since the only alternating, $L$-space knots are the $T(2, n)$ torus knots \cite{OS05b}, Theorem \ref{thm:main} provides an immersed curves reproof that alternating knots satisfy the Cabling Conjecture.

\begin{corollary}
Alternating knots satisfy the Cabling Conjecture.
\end{corollary}

It is conjectured that the only thin, L-space knots are the torus knots $T(2,n)$. Provided this is true, there would not exist thin, hyperbolic, L-space knots and so Theorem \ref{thm:main} would show that all thin knots satisfy the Cabling Conjecture. Regardless, Bodish and the author have since generalized the absolute grading computations in Subsection \ref{sec:dinvt} to circumvent this condition to show that

\begin{theorem}[\cite{DB22}]
Thin knots satisfy the Cabling Conjecture.
\label{thm:DB}
\end{theorem}

Part of the proof strategy for Theorem \ref{thm:main} involves obstructing an $\R P^3$ connected summand, and so we get the following corollary with identical proof to that of \cite[Corollary 1.5]{HLZ15}.

\begin{corollary}
If $K$ is a thin, hyperbolic knot, then $S^3 \setminus \nu K$ does not contain properly embedded punctured projective planes.
\end{corollary}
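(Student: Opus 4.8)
The plan is to cap off a hypothetical properly embedded punctured projective plane against a Dehn filling, producing an embedded $\R P^2$ inside a surgery on $K$, and then to rule this out using Theorem \ref{thm:main} together with the obstruction to an $\R P^3$ connected summand that enters its proof. This runs exactly as in the proof of \cite[Corollary 1.5]{HLZ15}.

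Suppose, for contradiction, that $X = S^3 \sm \nu K$ contains a properly embedded once-punctured projective plane $P$, that is, a Möbius band. Its boundary $\partial P$ is a single simple closed curve on $\partial\nu K$, and it cannot bound a disk there, since otherwise $P$ capped off by that disk would be a closed $\R P^2$ embedded in $S^3$, which is impossible. Hence $\partial P$ is a slope $r = p/q$, and since $[\partial P] = 2[c]$ in $H_1(P)$ for $c$ a core of the band, the image of $[\partial P]$ in $H_1(X) \cong \Z$ is even; as this image is the meridional coefficient $p$, the integer $p$ is even. Now $\partial P$ bounds the meridional disk of the filling solid torus in $S^3_r(K)$, so $P$ together with that disk is an embedded $\R P^2 \subset S^3_r(K)$. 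A regular neighborhood of this $\R P^2$ is the twisted $I$-bundle over $\R P^2$, a punctured $\R P^3$, and its boundary $2$-sphere either bounds a ball in the complement — whence $S^3_r(K) \cong \R P^3$ — or it does not, whence $S^3_r(K)$ is reducible with $\R P^3$ as a connected summand.

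In the reducible case, Theorem \ref{thm:main} forces $r = \pm(2g(K)-1)$, an odd integer, which is incompatible with $p$ being even; alternatively, one invokes directly the step in the proof of Theorem \ref{thm:main} that obstructs an $\R P^3$ summand. In the case $S^3_r(K) \cong \R P^3$, note that $\R P^3$ is an L-space, so after possibly mirroring $K$ we may take $r > 0$ and conclude that $K$ is an L-space knot; then $2/q = r \ge 2g(K)-1$, since every positive L-space surgery slope of an L-space knot is at least $2g(K)-1$ and $|H_1(\R P^3)| = 2$ gives $p = 2$. Hence $g(K) \le 1$, so $K$ is the unknot or a trefoil, neither of which is hyperbolic — a contradiction. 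Therefore $X$ contains no such $P$. A punctured projective plane with several boundary components is handled the same way after first capping off any boundary curves that bound disks in $\partial\nu K$ (the remaining curves share a single slope $r$, and the $\R P^3$-summand step needs no parity input).

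Granted Theorem \ref{thm:main}, there is no single hard step; the points requiring a little care are the standard fact that an embedded $\R P^2$ forces the ambient closed orientable $3$-manifold to be $\R P^3$ or reducible, and the exclusion of $\R P^3$ surgery on a hyperbolic knot via the L-space surgery genus bound. I would foreground the observation that a Möbius band has even meridional boundary coefficient, so that the reducible case closes against the odd slope $2g(K)-1$ of Theorem \ref{thm:main} with essentially no further argument.
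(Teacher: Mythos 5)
Your argument is correct and is essentially the paper's proof: the paper states the corollary follows verbatim as in \cite[Corollary 1.5]{HLZ15}, i.e.\ cap the M\"obius band with a meridian disk to get an embedded $\R P^2$ in $S^3_r(K)$, rule out the reducible case by the $\R P^3$-summand/parity obstruction against the odd slope $2g(K)-1$ of Theorem \ref{thm:main}, and rule out $S^3_r(K) \cong \R P^3$ via the L-space surgery genus bound. (Only your closing aside is shaky---capping several boundary components yields a higher-genus closed non-orientable surface rather than an $\R P^2$, which by itself gives neither reducibility nor an $\R P^3$ summand---but ``punctured projective plane'' here means once-punctured, so this does not affect the proof.)
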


When $K$ is a non-trivial knot in $S^3$ with reducible surgery $S^3_r(K)$, the surgery decomposes as a connected sum and the reducing slope satisfies $r \neq 0$ due to \cite{Gab87}. We saw from the cabled knot example that the reducing slope is an integer and one of the connected summands is a lens space. The former and latter conditions occur for all reducible surgeries due to \cite{GL87} and \cite{GL89}, respectively. A reducible surgery can admit at most three connected summands due to the combined efforts of \cite{Say98}\cite{Val99}\cite{How02}, in which case two summands are lens spaces and the remaining summand is an integer homology sphere. Since $S^3_r(K)$ must have a non-trivial lens space summand, the integral reducing slope $r$ satisifes $r \neq -1, 0, 1$. In \cite{MS03}, Matignon and Sayari provide the following genus bound if $K$ is non-cabled:
\[
1 < |r| \leq 2g(K)-1.
\]

Heegaard Floer homology satisfies a K\"unneth formula for connected sums, and has proved very useful in general for studying Dehn surgery. If surgery along $K$ produces a connected sum of precisely two lens spaces, then $K$ must be a cabled knot due to \cite{Gre15}. Further, \cite{Gre15} together with \cite{BZ98} shows that a hyperbolic knot in $S^3$ cannot admit both a lens space surgery and a reducible surgery. Hom, Lidman, and Zufelt showed that a hyperbolic, $L$-space knot can admit at most one reducing slope, and the slope must be $2g(K)-1$ after mirroring the knot to make the slope positive \cite{HLZ15}. They also established a periodicity structure to the Heegaard Floer homology of a reducible surgery, which is invaluable to the proof strategy of Theorem \ref{thm:main}. We will involve these constraints via bordered invariants in the form of immersed curves. 

Lipshitz, Ozsv\'ath, and Thurston introduced bordered Heegaard Floer invariants for manifolds with boundary in \cite{LOT18b}. With $M_1 \cup_h M_2$ denoting a gluing of two such manifolds, they prove a pairing theorem involving the two bordered invariants that recovers the Heegaard Floer homology of the glued-together manifold (see Subsection \ref{subsec:ICs} for more details). In the torus boundary case, Hanselman, Rasmussen, and Watson reinterpreted these bordered invariants as collections of immersed curves in the punctured torus, and proved an analogous pairing theorem. That is, they show that the hat-flavor of Heegaard Floer homology of $M_1 \cup_h M_2$ is the Lagrangian intersection Floer homology of the immersed curves invariants for $M_1$ and $M_2$. In \cite{Han23a}, Hanselman used this package to obtain obstructions for cosmetic surgeries along knots in $S^3$, and our approach in this paper is largely inspired by this work.

\section*{Organization}

We only consider surgeries with positive slopes, and mirror knots to achieve this whenever necessary. All manifolds are assumed to be compact, connected, oriented 3-manifolds, and the coefficients in Floer homology are taken to belong to $\F = \F_2$. We will denote closed manifolds by $X$ or $Y$, and manifolds with (typically torus) boundary by $M$. Figures containing immersed curves invariants will have the curves for $S^3 \setminus \nu K$ in red and the curves for the filling solid torus in blue or purple.\\
\indent Section 2 summarizes the relevant background from knot Floer homology and Heegaard Floer homology. It also contains an overview of immersed curves invariants, their general properties and form for thin knots, as well as their associated pairing theorem and how to compute Maslov grading differences.\\
\indent Section 3 expands on the relative Maslov grading for immersed curves invariants of complements of thin knots. Along the way we set up formulas for components of the grading difference formula in terms of $\tau(K)$.\\
\indent Section 4 uses these relations to generate obstructions to periodicity for various cases of $r$ in relation to $\tau(K)$ and $g(K)$. It hosts a sizable collection of lemmas for the cases with $|\tau(K)| < g(K)$, for which referencing Figure \ref{fig:Sec4Cases} is highly advised. \\
\indent Section 5 resolves the remaining cases where $|\tau(K)| = g(K)$, including some that use absolute grading information. Once again, Figure \ref{fig:Sec5Cases} may be useful for following the arguments. Afterward, all lemmas are collected to handle the proof of the theorem.

\section*{Acknowledgements}

I am grateful to Tye Lidman for their unending encouragement, patience, and insight as an advisor. I would also like to thank Steven Sivek for pointing out an oversight in the regions used to compute the $H$'s and $V$'s of knots with $\tau(K) < 0$. I was partially supported by NSF grant DMS-1709702.

\section{Background Material}
\label{sec:background}

We will assume the reader is familiar with the $\widehat{\textit{HF}}$ and $\textit{HF}^+$ constructions of Heegaard Floer homology for 3-manifolds \cite {OS04d}, and knot Floer homology $\widehat{\textit{HFK}}$ for knots in $S^3$ (with associated full knot Floer complex $\textit{CFK}^{\infty}$) \cite{OS04b, Ras03}.

\subsection{$\widehat{\textit{HF}}$ for reducible surgeries}
\label{subsec:reducible}

Let us identify $\text{Spin}^{c}(S^3_r(K))$ with $\Z/r\Z$ as in \cite[Subsection 2.4]{OS08}, and denote the correspondence using $[s] \in \text{Spin}^{c}(S^3_r(K))$ for $[s] \in \Z/r\Z$. We will also choose equivalence classes for elements of $\Z/r\Z$ as centered about $0$, so that for example $\Z/r\Z = \left\{-\dfrac{r-1}{2}, \dots, 0, \dots, \dfrac{r-1}{2}\right\}$ if $r$ is odd. As an abuse of notation, we will commonly use $s$ for the representative of $[s]$ that falls within this range.

The following lemma is a simplified version of a more general Floer homology periodicity result for $\textit{HF}^+$ of a general reducible 3-manifold from \cite{HLZ15}. Basically, we should expect to see repeated behavior among the $\text{spin}^{c}$ summands of $\widehat{\textit{HF}}(S^3_r(K))$ if the surgery is reducible. 

\begin{lemma}[Periodicity]
Suppose $S^3_r(K) \cong X \# Y$, where $X$ is an L-space and $|H^2(Y)| = k < \infty$. Then for any $[s] \in \text{Spin}^{c}(S^3_r(K))$ and $\alpha \in H^2(S^3_r(K)) \cong \Z/r\Z$, we have $\widehat{\textit{HF}}(S^3_r(K), [s + k\alpha]) \cong \widehat{\textit{HF}}(S^3_r(K), [s])$ as relatively-graded $\F$ vector spaces.
\label{lem:periodicity}
\end{lemma}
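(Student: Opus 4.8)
The plan is to deduce the periodicity statement from the Künneth formula for Heegaard Floer homology under connected sum, together with the fact that an $L$-space has the simplest possible $\widehat{\textit{HF}}$. First I would recall that for a connected sum $X \# Y$ there is an identification $\text{Spin}^c(X \# Y) \cong \text{Spin}^c(X) \times \text{Spin}^c(Y)$, and the Künneth formula gives, for a $\spinc$ structure $\t = \t_X \# \t_Y$,
\[
\widehat{\textit{HF}}(X \# Y, \t_X \# \t_Y) \cong \widehat{\textit{HF}}(X, \t_X) \otimes_{\F} \widehat{\textit{HF}}(Y, \t_Y)
\]
as relatively graded $\F$-vector spaces (the tensor product being over $\F = \F_2$, so there are no $\operatorname{Tor}$ terms). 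Since $X$ is an $L$-space, $\widehat{\textit{HF}}(X, \t_X) \cong \F$ for \emph{every} $\t_X \in \text{Spin}^c(X)$, so the right-hand side depends only on $\t_Y$, up to an overall grading shift which is irrelevant for a relatively graded statement. Thus $\widehat{\textit{HF}}(X \# Y, \t)$ depends only on the image of $\t$ under the projection $\text{Spin}^c(X \# Y) \to \text{Spin}^c(Y)$.

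Next I would translate this into the arithmetic of $\Z/r\Z$. On second cohomology the connected sum gives $H^2(X \# Y) \cong H^2(X) \oplus H^2(Y)$, and since $S^3_r(K)$ has $|H^2| = r$ and $|H^2(Y)| = k$, we get $|H^2(X)| = r/k$ in particular $k \mid r$. Under the identification $H^2(S^3_r(K)) \cong \Z/r\Z$ of \cite[Subsection 2.4]{OS08}, the subgroup coming from $H^2(X)$ — which is where the ``simple'' factor lives — is the unique subgroup of order $r/k$, namely $k \cdot (\Z/r\Z)$. The content of the previous paragraph is precisely that $\widehat{\textit{HF}}(S^3_r(K), [s])$ is constant on cosets of $k(\Z/r\Z)$: translating $[s]$ by $k\alpha$ for any $\alpha \in \Z/r\Z$ changes only the $H^2(X)$-component of the $\spinc$ structure, hence does not change $\widehat{\textit{HF}}$ up to relative grading. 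This is exactly the claimed isomorphism $\widehat{\textit{HF}}(S^3_r(K), [s + k\alpha]) \cong \widehat{\textit{HF}}(S^3_r(K), [s])$.

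The main point requiring care — and the place I would expect to spend the most effort — is verifying that the subgroup of $\text{Spin}^c(S^3_r(K))$ along which one translates really is $k(\Z/r\Z)$, i.e.\ that the affine $\text{Spin}^c$-identifications are compatible with the algebraic splitting of $H^2$ coming from the connected sum decomposition. Concretely one must check that the $H^2(X)$-summand of $H^2(X \# Y) \cong H^2(S^3_r(K))$ corresponds under the chosen parametrization to the index-$k$ subgroup $k(\Z/r\Z)$, and that the $H^2$-action on $\text{Spin}^c$ is the obvious translation action. This is a standard but slightly fiddly bookkeeping check; alternatively, since only a \emph{relatively} graded statement is needed, one can sidestep the absolute identifications entirely and argue that for each fixed residue of $[s]$ modulo $k$ the groups $\widehat{\textit{HF}}(S^3_r(K), [s+k\alpha])$ are all isomorphic because they are all computed by the Künneth formula as $\F \otimes \widehat{\textit{HF}}(Y, \t_Y)$ for one and the same $\t_Y$, as $\alpha$ ranges over $\Z/r\Z$. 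I would then remark that this is the specialization to $\widehat{\textit{HF}}$ of the $\textit{HF}^+$ periodicity result of \cite{HLZ15}, which handles the general reducible $3$-manifold, and note that here $Y$ need not be an integer homology sphere — only $|H^2(Y)| < \infty$ is used.
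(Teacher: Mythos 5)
Your proposal is correct and follows essentially the same argument as the paper: apply the K\"unneth formula for $\widehat{\textit{HF}}$ under connected sum, use $\widehat{\textit{HF}}(X,\mathfrak{t}_X)\cong\F$ for the $L$-space summand so that the answer depends only on the restriction to $\text{Spin}^c(Y)$, and observe that translating $[s]$ by $k\alpha$ leaves that restriction unchanged. The extra care you flag about identifying $k(\Z/r\Z)$ with the $H^2(X)$-summand is exactly the step the paper states without elaboration, so there is no substantive difference.
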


\begin{proof}
Let $[s] \in \text{Spin}^{c}(S^3_r(K))$ restrict to $[s_i] \in \text{Spin}^{c}(X)$ and $[s_j] \in \text{Spin}^{c}(Y)$. We see that $\widehat{\textit{HF}}(X, [s_i]) \cong \F$ since $X$ is an L-space, and so the K\"unneth formula for $\widehat{\textit{HF}}$ \cite[Theorem 1.5]{OS04c} implies
\begin{align*}
\widehat{\textit{HF}}(S^3_r(K), [s]) &\cong H_{\ast}(\widehat{CF}(X,[s_i]) \otimes_{\F} \widehat{CF}(Y, [s_j])) \\
&\cong \widehat{\textit{HF}}(Y, [s_j]).
\end{align*}
For any $\alpha \in \Z/r\Z$, we have that $[s+k\alpha]$ restricts to $[s_j]$ in $\text{Spin}^{c}(Y)$. Then because $\widehat{\textit{HF}}(S^3_r(K), [s])$ is independent of $[s_i]$, we obtain

\[
\widehat{\textit{HF}}(S^3_r(K), [s+k\alpha]) \cong \widehat{\textit{HF}}(Y, [s_j]) \cong \widehat{\textit{HF}}(S^3_r(K), [s])
\]
as relatively-graded $\F$ vector spaces.
\end{proof}

We also need to gather some integral invariants of $K$ involved with the mapping cone formula that relates $\textit{CFK}^{\infty}(K)$ to $\textit{HF}^+(S^3_r(K))$ \cite{OS08}. For $s \in \Z$, recall the subcomplexes and quotient complexes of the $\Z \oplus \Z$-filtered full knot Floer complex $\textit{CFK}^{\infty}(K)$
\begin{align*}
\mathcal{A}^+_s &= C\left\{\text{max}\left\{i, j-s\right\} \geq 0\right\}, \\
\mathcal{B}^+_s &= C\left\{i \geq 0\right\}.
\end{align*}

Notice $\mathcal{B}^+_s \cong \textit{CF}^+(S^3)$ by definition. There are also chain maps $\mathfrak{v}^+_s: \mathcal{A}^+_s \rightarrow \mathcal{B}^+_s$ and $\mathfrak{h}^+_s: \mathcal{A}^+_s \rightarrow \mathcal{B}^+_{s+r}$ between these  subcomplexes. Take homology to obtain $A^+_s = H_{\ast}(\mathcal{A}^+_s)$ and $B^+_s = H_{\ast}(\mathcal{B}^+_s) \cong \textit{HF}^+(S^3)$, and induced maps $v^+_s$ and $h^+_s$. Let $\mathcal{T}^+$ denote $\textit{HF}^+(S^3)$, and notice that $U^N(A^+_s) \cong \mathcal{T}^+$ for sufficiently large $N$. By restricting both $v^+_s$ and $h^+_s$ to this submodule, we obtain $\overline{v}^+_s$ and $\overline{h}^+_s$. The integral invariants of $K$ that we desire are due to \cite{NW15}, and are defined by
\begin{align*}
V_s &= \text{rank}(\text{ker}\overline{v}^+_s), \\
H_s &= \text{rank}(\text{ker}\overline{h}^+_s).
\end{align*}

These terms have simple behaviour when $K$ is alternating because of the ``staircase" part of $\textit{CFK}^{\infty}(K)$ due to \cite{OS03c}. This holds more generally for thin knots due to \cite{Pet13}, but we will have an alternative geometric way of computing these terms later in Subsection \ref{subsec:ICs}. By \cite[Lemma 2.3]{HLZ15}, the maps $v^+_s$ and $h^+_{-s}$ agree on homology after identifying $A_s^+ \cong A_{-s}^+$ (essentially reversing the roles of $i$ and $j$ above) so that $V_s = H_{-s}$. These integer invariants are by definition non-negative, and also satisfy the following lemma.

\begin{lemma}[{\cite[Lemma 2.4]{NW15}}]
The $V_s$ form a non-increasing sequence and the $H_s$ form a non-decreasing sequence, so that
\[
V_s \geq V_{s+1} \,\, \text{and} \,\, H_{s} \leq H_{s+1} \,\, \text{for all} \,\, s \in \Z.
\]
\end{lemma}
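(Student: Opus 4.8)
The plan is to deduce the statement about the $H_s$ from the one about the $V_s$ using the symmetry $V_s = H_{-s}$ recalled just above: granting $V_s \geq V_{s+1}$ for all $s$, we obtain $H_s = V_{-s} \leq V_{-s-1} = H_{s+1}$ since $-s-1 < -s$. So it suffices to show the $V_s$ are non-increasing, and for this I would compare $\overline{v}^+_s$ and $\overline{v}^+_{s+1}$ by means of a map relating $\mathcal{A}^+_s$ to $\mathcal{A}^+_{s+1}$.

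First I would observe that there is a canonical surjective chain map $\pi_s \colon \mathcal{A}^+_s \to \mathcal{A}^+_{s+1}$. Indeed, the region $\{\max\{i, j-s-1\} \geq 0\}$ defining $\mathcal{A}^+_{s+1}$ is contained in the region $\{\max\{i, j-s\} \geq 0\}$ defining $\mathcal{A}^+_s$; writing each complex as $\textit{CFK}^{\infty}(K)$ modulo its complementary (downward-closed) subcomplex, the subcomplex used for $\mathcal{A}^+_{s+1}$ then contains the one used for $\mathcal{A}^+_s$, the difference being the subcomplex $C\{i < 0,\ j = s\}$. This difference is finite-dimensional over $\F$ — it consists of exactly the element $U^{A(x)-s}x$ for each generator $x$ of $\textit{CFK}^{\infty}(K)$ over $\F[U,U^{-1}]$ with $A(x) \geq s+1$ — so $\pi_s$ is onto with finite-dimensional kernel. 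Since $\mathcal{B}^+_s = C\{i\geq 0\} = \mathcal{B}^+_{s+1}$ and $\mathfrak{v}^+_s$ is the further quotient of $\mathcal{A}^+_s$ onto $C\{i\geq 0\}$, we have $\mathfrak{v}^+_{s+1}\circ \pi_s = \mathfrak{v}^+_s$, and the same identity holds after passing to homology.

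Next I would restrict everything to the $U$-towers. Because $\ker \pi_s$ is finite-dimensional, the long exact sequence of the quotient shows the induced map $A^+_s \to A^+_{s+1}$ is an isomorphism in all sufficiently high Maslov gradings, where each of $A^+_s$ and $A^+_{s+1}$ agrees with its $\mathcal{T}^+$-summand. Restricting to $\mathcal{T}^+ \cong U^N A^+_s$ for $N$ large therefore gives a nonzero, $U$-equivariant, grading-preserving map $\overline{\pi}_s \colon \mathcal{T}^+ \to \mathcal{T}^+$; any such map is surjective with kernel of some finite rank $a_s \geq 0$. Restricting the homological identity to these towers gives $\overline{v}^+_{s+1}\circ \overline{\pi}_s = \overline{v}^+_s$, and comparing $\F$-ranks of kernels along a composition of surjections (so that the ranks add) yields $V_s = V_{s+1} + a_s \geq V_{s+1}$. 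Together with the reduction for $H$ above, this finishes the proof.

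The argument is essentially formal once the maps are in place; the one step deserving care is matching the two copies of $\mathcal{T}^+$, which in general sit at different bottom Maslov gradings, together with the verification that $\overline{\pi}_s \neq 0$ — were it zero, $\overline{v}^+_s$ would vanish on the tower, forcing the impossible $V_s = \infty$. Once this bookkeeping is settled, the rank count, and hence monotonicity of both $V$ and $H$, is immediate.
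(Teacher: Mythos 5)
The paper does not actually prove this lemma; it imports it verbatim from Ni--Wu, so there is no internal argument to compare against. Your proposal is a correct, self-contained proof, and it is essentially the standard one: the quotient chain map $\pi_s\colon \mathcal{A}^+_s \to \mathcal{A}^+_{s+1}$ exists exactly as you say (the complementary subcomplex grows by the finite-dimensional piece $C\{i<0,\ j=s\}$), it satisfies $\mathfrak{v}^+_{s+1}\circ\pi_s=\mathfrak{v}^+_s$ since both sides are the quotient onto $C\{i\geq 0\}$, and your finite-kernel/long-exact-sequence step shows the induced map is an isomorphism in high gradings, which in particular gives the nonvanishing of $\overline{\pi}_s$ on the towers without needing the finiteness of $V_s$ as a separate input. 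The remaining bookkeeping is sound: a nonzero $U$-equivariant endomorphism of $\mathcal{T}^+$ is surjective with finite kernel, and since $\overline{\pi}_s$ is surjective, $\ker\overline{v}^+_s=\overline{\pi}_s^{-1}\bigl(\ker\overline{v}^+_{s+1}\bigr)$ gives $V_s=V_{s+1}+\dim\ker\overline{\pi}_s\geq V_{s+1}$; the reduction $H_s=V_{-s}$ then yields the statement for the $H_s$. This matches the Rasmussen/Ni--Wu argument in substance, so nothing further is needed.
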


For a rational homology sphere $Y$, we can write $\textit{HF}^+(Y, \mathfrak{s}) \cong \mathcal{T}^+ \oplus \textit{HF}_{red}(Y, \mathfrak{s})$, where $\mathcal{T}^+ \cong \F[U, U^{-1}]/\F[U]$ denotes the ``tower" submodule. The $d$-\textit{invariants} $d(Y, \mathfrak{s})$, sometimes called the \textit{Heegaard Floer correction terms}, record the smallest absolutely graded element of $\mathcal{T}^+ \subseteq \textit{HF}(Y, \mathfrak{s})$ \cite{OS03a}. These invariants satisfy a few symmetries, such as $\text{spin}^{c}$ conjugation symmetry $d(Y, \mathfrak{s}) = d(Y, \overline{\mathfrak{s}})$ and orientation-reversal $d(-Y, \mathfrak{s}) = -d(Y, \mathfrak{s})$, as well as additivity for connected sums. It is normalized so that $d(S^3, \mathfrak{s}_0) = 0$, and is recursively determined for lens spaces in \cite[Proposition 4.8]{OS03a}. In \cite{NW15}, the $d$-invariants of rational surgeries are shown to be determined by the invariants $V_s$ and $H_s$ together with the $d$-invariants of a lens space that depends on homological data. We state a special case of the more general result for our purposes.

\begin{proposition}[{\cite[Proposition 1.6]{NW15}}]
Suppose $r$ is integral and positive, and fix $0 \leq s < r-1$. Then 
\[
d(S^3_r(K),[s]) = d(L(r,1),[s])-2 \, \text{max}\left\{V_s, V_{r-s}\right\}.
\]
\label{prop:dinvtsurg}
\end{proposition}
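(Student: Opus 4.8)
The plan is to read the correction term directly off the Ozsv\'ath--Szab\'o integer surgery formula, following the argument of Ni--Wu. Write $n = r$. By \cite{OS08}, $\textit{HF}^+(S^3_n(K),[s])$ is the homology of the mapping cone $\mathbb{X}^+(n,[s])$ of
\[
D^+ \colon \bigoplus_{i\in\Z}\mathcal{A}^+_{s+ni}\longrightarrow\bigoplus_{i\in\Z}\mathcal{B}^+_{s+ni},\qquad D^+|_{\mathcal{A}^+_k}=\mathfrak{v}^+_k+\mathfrak{h}^+_k ,
\]
where $\mathfrak{v}^+_k\colon\mathcal{A}^+_k\to\mathcal{B}^+_k$ and $\mathfrak{h}^+_k\colon\mathcal{A}^+_k\to\mathcal{B}^+_{k+n}$. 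The first step is to truncate this to a finite complex: since $\mathfrak{v}^+_k$ is a quasi-isomorphism for $k\geq g(K)$ and $\mathfrak{h}^+_k$ is a quasi-isomorphism for $k\leq -g(K)$, the two half-infinite tails of $\mathbb{X}^+(n,[s])$ form acyclic subcomplexes, and passing to the quotient leaves a finite zig-zag $\mathcal{A}^+\to\mathcal{B}^+\leftarrow\mathcal{A}^+\to\cdots\leftarrow\mathcal{A}^+$ in which the vertex $\mathcal{B}^+_s$ appears and is flanked by $\mathcal{A}^+_s$ and $\mathcal{A}^+_{s-n}$; here the hypothesis $0\leq s<r-1$ is what places these particular representatives at the center.

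The second step reduces the computation of $d$ to the ``tower'' part of this truncated complex. Since $S^3_n(K)$ is a rational homology sphere, $\textit{HF}^+(S^3_n(K),[s])$ carries a single $\mathcal{T}^+$-tower, and $d(S^3_n(K),[s])$ is its bottom grading. Replacing each $B^+_k\cong\mathcal{T}^+$ by itself and each $A^+_k$ by the submodule $U^N(A^+_k)\cong\mathcal{T}^+$ turns $v^+_k$ and $h^+_k$ into multiplication by $U^{V_k}$ and $U^{H_k}$ respectively, with $H_k=V_{-k}$ by \cite[Lemma 2.3]{HLZ15}; one has to check that the discarded $\textit{HF}_{red}$ summands of the $A^+_k$ cannot lower the bottom of the surviving tower, which holds because those summands are supported in bounded gradings while the tower survives arbitrarily high powers of $U$. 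What is left is a finite complex of copies of $\mathcal{T}^+$ with differentials given by powers of $U$, whose homology again carries a single tower.

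The third step is to locate the bottom of that tower by a diagram chase. The key observation is that every $\mathcal{A}^+$-vertex other than $\mathcal{A}^+_s$ and $\mathcal{A}^+_{s-n}$ has one of its two outgoing maps equal to the identity after the tower reduction, so it can be cancelled against a neighboring $\mathcal{B}^+$-vertex and imposes no constraint; the only binding data is that $\mathcal{B}^+_s$ receives $U^{V_s}$ from $\mathcal{A}^+_s$ and $U^{H_{s-n}}=U^{V_{n-s}}$ from $\mathcal{A}^+_{s-n}$. Running the remaining cancellations through shows the surviving tower sits exactly $2\max\{V_s,V_{n-s}\}$ below where it sits when $K$ is replaced by the unknot; and for the unknot all $V_k$ and $H_k$ vanish, so the same cone then computes $\textit{HF}^+(L(n,1),[s])$ and the reference tower lies at grading $d(L(n,1),[s])$. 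Combining these gives the stated identity. Once the immersed-curve model of Subsection \ref{subsec:ICs} is in hand one may instead read both sides of the formula off the curve for $S^3\setminus\nu K$ together with the filling slope, which gives an alternative route.

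The step I expect to be the main obstacle is the bookkeeping in the third one: carrying the absolute gradings correctly through the truncation and the string of cancellations, and confirming that it is the \emph{maximum} of $V_s$ and $V_{n-s}$ (equivalently $V_{\min\{s,\,n-s\}}$, since the $V_k$ are non-increasing) that controls the tower, rather than a sum, a minimum, or $V_s$ alone. A useful sanity check along the way is the large-surgery regime $n\geq 2g(K)-1$, where the truncated cone collapses to the single vertex $\mathcal{A}^+_s$, one has $\textit{HF}^+(S^3_n(K),[s])\cong A^+_s$, and the statement reduces to the familiar $d(S^3_n(K),[s])=d(L(n,1),[s])-2V_s$.
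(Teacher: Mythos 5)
First, context: the paper does not prove this proposition at all --- it is quoted verbatim from Ni--Wu \cite{NW15} --- so there is no internal argument to compare against, and what you are really doing is reconstructing the proof from the cited source. Your overall strategy (the integer surgery mapping cone, truncation of the two acyclic tails, passing to the tower parts where $\mathfrak{v}^+_k$ and $\mathfrak{h}^+_k$ become $U^{V_k}$ and $U^{H_k}$, and normalizing against the unknot to produce the $d(L(r,1),[s])$ term) is indeed the right route and is the one underlying \cite{NW15}; your first two steps are fine modulo the standard justification that the tower of the homology of the cone is the image of arbitrarily large powers of $U$ and hence is computed by the subcomplex of towers.

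The genuine gap is exactly where you flagged it, in the third step, and it is not just bookkeeping: the claim that every $\mathcal{A}^+$-vertex other than $\mathcal{A}^+_s$ and $\mathcal{A}^+_{s-n}$ has one of its two outgoing maps equal to the identity after the tower reduction is false in general. For an interior vertex $\mathcal{A}^+_{s+in}$ with $i \geq 1$ one has $H_{s+in} = V_{s+in} + (s+in) > 0$ automatically (Lemma \ref{lem:HminusV}), and $V_{s+in} > 0$ whenever $s+in$ is still below the threshold where the $V$'s vanish. A concrete example inside the paper's own class: for the (thin) knot $T(2,7)$ with $n=2$, $s=0$, the truncated cone retains $\mathcal{A}^+_2$, which has $V_2 = 1$ and $H_2 = 3$, so both outgoing maps are positive powers of $U$; since $U^m$ with $m>0$ is not an isomorphism of $\mathcal{T}^+$, this vertex cannot be cancelled against a neighboring $\mathcal{B}^+$, and the reduction to ``only $\mathcal{B}^+_s$ imposes constraints'' collapses. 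The whole content of the proposition is that, despite these intermediate non-invertible maps, the bottom of the surviving tower is governed by $\max\{V_s, H_{s-n}\} = \max\{V_s, V_{n-s}\}$ alone; establishing this requires an actual analysis of the kernel of the tower-reduced $D^+$ (or, as in Ni--Wu's treatment, grading comparisons coming from the two-handle cobordism maps that realize $v^+_s$ and $h^+_s$, bootstrapped from the large-surgery case), not a cancellation. As written, your argument would yield the same formula for any values of the intermediate $V$'s for no stated reason, which is precisely the point that needs proof. Your large-surgery sanity check is correct, and is in fact the natural base case from which the general argument should be run.
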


Among many of its applications, this result enables the following lemma.

\begin{lemma}[{\cite[Lemma 2.5]{HLZ15}}]
For all $s \in \Z$, the integers $V_s$ and $H_s$ are related by
\[
H_s - V_s = s.
\]
\label{lem:HminusV}
\end{lemma}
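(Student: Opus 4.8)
The plan is to read the identity off the chain-level definitions of $V_s$ and $H_s$, by computing the bottom grading of the tower in $A^+_s$ in two different ways: once through the vertical map and once through the horizontal map. I will use throughout that $\mathcal{A}^+_s = C\{\max\{i,j-s\}\geq 0\}$ carries the absolute Maslov grading inherited from $\textit{CFK}^{\infty}(K)$, so that $A^+_s$ has a well-defined tower $U^N A^+_s \cong \mathcal{T}^+$ with a bottom class; write $d(A^+_s)$ for the grading of that class. The only general fact needed is the elementary one that a $U$-equivariant, grading-preserving surjection of towers $\mathcal{T}^+ \to \mathcal{T}^+$ is multiplication by $U^k$ with $k$ the rank of its kernel, so that it raises the bottom grading by exactly $2k$.

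First I would show $d(A^+_s) = -2V_s$. At the chain level $\mathfrak{v}^+_s \colon \mathcal{A}^+_s \to \mathcal{B}^+_s = C\{i\geq 0\}$ is the quotient by the subcomplex $C\{i<0 \leq j-s\}$; it is a degree-$0$ surjection, hence so is $v^+_s$ on homology and so is $\overline{v}^+_s = U^{V_s}$ on towers. The general fact then gives $d(\mathcal{B}^+_s) = d(A^+_s) + 2V_s$, and $d(\mathcal{B}^+_s) = 0$ since $\mathcal{B}^+_s \cong \textit{CF}^+(S^3)$ with its standard grading.

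Next I would show $d(A^+_s) = 2s - 2H_s$. Working with the unshifted model, $\mathfrak{h}^+_s$ is the degree-$0$ surjective quotient of $\mathcal{A}^+_s$ onto $C\{j\geq s\}$; the subsequent identification of $C\{j\geq s\}$ with a copy of $\textit{CF}^+(S^3)$ is an isomorphism and so does not change $H_s = \text{rank}(\ker\overline{h}^+_s)$. Thus the same argument gives $d\bigl(H_*(C\{j\geq s\})\bigr) = d(A^+_s) + 2H_s$, and it remains to compute $d\bigl(H_*(C\{j\geq s\})\bigr) = 2s$. This is where I would invoke the conjugation symmetry of $\textit{CFK}^{\infty}(K)$: the flip $[\mathbf{x},i,j]\mapsto[\mathbf{x}',j,i]$ is a Maslov-grading-preserving chain homotopy equivalence $C\{j\geq s\}\simeq C\{i\geq s\}$ (grading preservation follows from $A(\mathbf{x}')=-A(\mathbf{x})$, $M(\mathbf{x}')=M(\mathbf{x})-2A(\mathbf{x})$ and $j-i=A(\mathbf{x})$), after which multiplication by $U^s$ carries $C\{i\geq s\}$ isomorphically onto $C\{i\geq 0\}=\textit{CF}^+(S^3)$ while lowering Maslov grading by $2s$; tracking the bottom of the tower through these two maps yields $d\bigl(H_*(C\{j\geq s\})\bigr)=2s$. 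Combining the two computations, $-2V_s = d(A^+_s) = 2s-2H_s$, i.e.\ $H_s - V_s = s$.

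The step I expect to be the main obstacle is the absolute-grading bookkeeping rather than anything conceptual: one must be careful that $\mathfrak{v}^+_s$ and $\mathfrak{h}^+_s$ really are degree $0$ for the gradings inherited from $\textit{CFK}^{\infty}(K)$, that $\overline{v}^+_s = U^{V_s}$ and $\overline{h}^+_s = U^{H_s}$ are the right way to read the definitions of $V_s$ and $H_s$ on towers, and — most delicately — that the conjugation symmetry used above is genuinely Maslov-grading-preserving. As a sanity check one can verify the claim directly for the unknot, where $V_s=\max\{0,-s\}$ and $H_s=\max\{0,s\}$, so that $H_s-V_s=s$. (Alternatively one could try to extract the identity from Proposition \ref{prop:dinvtsurg} at a large integral slope, but that would require inserting the explicit values of $d(L(r,1),[s])$, whereas the argument above stays entirely at the level of $\textit{CFK}^{\infty}(K)$.)
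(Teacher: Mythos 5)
Your argument is correct, and it is genuinely different in route from what the paper does: the paper offers no proof at all, quoting the identity from \cite[Lemma 2.5]{HLZ15} and remarking that it is ``enabled'' by the $d$-invariant surgery formula of Proposition \ref{prop:dinvtsurg}, whereas you stay entirely inside $\textit{CFK}^{\infty}(K)$ and never touch lens-space correction terms. The two key computations check out: the projection $\mathcal{A}^+_s \to \mathcal{B}^+_s$ is grading-preserving with finitely generated kernel complex, so on towers it is $U^{V_s}$ and forces $d(A^+_s) = -2V_s$; and for the horizontal side, the flip $[\mathbf{x},i,j]\mapsto[\mathbf{x}',j,i]$ preserves Maslov grading exactly as you verify ($M(\mathbf{x}')+2j = M(\mathbf{x})-2A(\mathbf{x})+2j = M(\mathbf{x})+2i$), and $U^s$ drops grading by $2s$, giving $d\bigl(H_*(C\{j\geq s\})\bigr)=2s$ and hence $H_s-V_s=s$. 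The one point to state carefully if you wrote this up is that the ``flip'' is the filtered chain homotopy equivalence coming from the conjugation symmetry of $\textit{CFK}^{\infty}$ \cite{OS04b}, which is only a quasi-isomorphism on the subquotient $C\{j\geq s\}\simeq C\{i\geq s\}$ rather than a literal isomorphism; since it induces an isomorphism on homology (and on towers), it indeed does not alter $\operatorname{rank}\ker\overline{h}^+_s$, so your bookkeeping is unaffected. Compared with the citation/$d$-invariant route (which in \cite{HLZ15} amounts to tracking absolute grading shifts through the large-surgery formula), your argument is more self-contained and makes transparent where the asymmetry $s$ comes from, at the cost of having to justify the grading-preservation of the symmetry and the tower formalism by hand; both give the same identity, as your unknot sanity check confirms.
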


\noindent We will involve the $d$-invariants later in Section \ref{sec:dinvt} when necessary.

\subsection{$\widehat{\textit{HF}}$ via immersed curves}
\label{subsec:ICs}

\begin{wrapfigure}{r}{0.4\linewidth}
\centering
\includegraphics[scale=1]{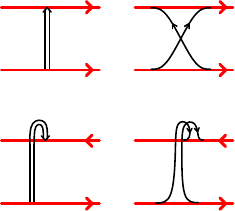}
\caption{Edges of a grading arrow either follow or oppose the orientations of the attached curve components.}
\label{fig:gradingarrows}
\end{wrapfigure}

Bordered Heegaard Floer homology, introduced by Lipshitz, Ozsv\'ath, and Thurston, provides a relative version of the hat-flavor of Heegaard Floer homology for a compact manifold $M$ with boundary. As our only manifolds with boundary in this paper have torus boundary, some of the subtleties of the general bordered theory will be glossed over - please refer to \cite{LOT18b} for further detail. A \textit{bordered} manifold $(M, \phi)$ is a compact manifold $M$ with boundary and a orientation-preserving diffeomorphism $\phi: \T^2 \rightarrow \partial M$. They associate an algebra $\mathcal{A}$ to $\T^2$, and define two bordered invariants related to $(M, \phi)$: a Type $D$ structure $\widehat{\textit{CFD}}(M, \phi)$ that is a left differential module over $\mathcal{A}$, and a Type $A$ structure $\widehat{\textit{CFA}}(M, \phi)$ that is a right $\mathcal{A}_{\infty}$ module. 

These two bordered invariants may be ``paired" together via the box tensor product, a computable model for the $\mathcal{A}_{\infty}$ tensor product, providing a cut-and-paste style of recovering $\widehat{\textit{HF}}$ for a 3-manifold $Y$ by decomposing $Y$ along a surface. Dubbed the pairing theorem, we will invoke it on bordered invariants in immersed curves form (see Theorem \ref{thm:ICpairing}) due to Hanselman, Rasmussen, and Watson \cite{HRW24, HRW22}. For a bordered manifold $(M, \phi)$ with torus boundary, we will specify $\phi$ by choosing a parameterization $(\alpha, \beta)$ of $\partial M$, and also fix a basepoint $z \in \partial M$. They recast the Type $D$ structure $\widehat{\textit{CFD}}(M, \alpha, \beta)$ as $\widehat{\textit{HF}}(M)$ - a collection of immersed curves in $T_M = \partial M \setminus z$, possibly decorated with local systems, defined up to regular homotopy of the curves. When $M = S^3 \setminus \nu K$, we will often take $\phi$ described by the Seifert-framed meridian-longitude basis $\left\{\mu, \lambda\right\}$.

\begin{remark}
We caution the reader regarding the similarity of the notation $\widehat{\textit{HF}}(Y)$ and $\widehat{\textit{HF}}(M)$, with $Y$ a closed manifold and $M$ a compact manifold with boundary. The former invariant is a graded vector space over $\F$, whereas the latter is a (possibly decorated) immersed curve in $\partial M \setminus \{z\}$.
\end{remark}

The manifolds in this paper all happen to be \textit{loop type} (see \cite{HW23}), which means that their associated immersed curves invariant has trivial local systems. If the invariant has multiple curve components, then they are connected by pairs of edges which we denote with a grading arrow as in \cite[Definition 28]{HRW22}. These are presented in Figure \ref{fig:gradingarrows}, and while domains involving grading arrows do not contribute to the differential, they are considered when determining Maslov grading differences. When $M=S^3 \setminus \nu K$, we can lift $\widehat{\textit{HF}}(M)$ to the infinite cylindrical cover $\overline{T}_M$, where each lifted marked point resides within a neighborhood of the lift of the meridian $\overline{\mu}$. The lifts of the marked points will be taken to lie at purely half-integral heights, and we will isotope the lifted curve components so that their horizontal tangencies occur at integral heights. Precisely one of the curves wraps around the cylinder, and we will use $\overline{\gamma}$ to denote this component. While $\overline{\gamma}$ is generally immersed, we will see that $\overline{\gamma}$ is embedded for thin knot complements. Figure \ref{fig:g2t1ex} shows a centered lift of the invariant for the complement of a hypothetical example of a thin knot $K$.

\begin{wrapfigure}{r}{0.3\linewidth}
\labellist
\small\hair 2pt
\pinlabel $2$ at -5 216
\pinlabel $1$ at -5 168
\pinlabel $0$ at -5 110
\pinlabel $-1$ at -5 57
\pinlabel $-2$ at -5 6
\pinlabel $\overline{\gamma}$ at 35 150
\pinlabel $\overline{\mu}$ at 65 225
\endlabellist
\centering
\includegraphics[scale=0.9]{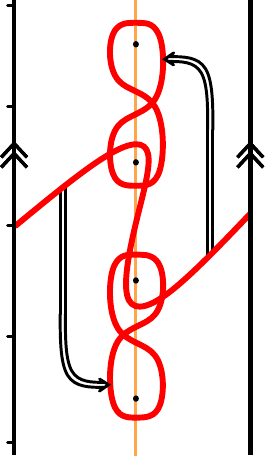}
\caption{An example of $\widehat{\textit{HF}}(M)$ for a hypothetical thin knot $K$ in $\overline{T}_M$. Integral heights are indicated, showing that the curves capture $g(K)=2$, $\tau(K)=1$, and $\epsilon(K)=1$.}
\vspace{-2\intextsep}
\label{fig:g2t1ex}
\end{wrapfigure}

Recall that $\widehat{\textit{HFK}}(K)$ detects $g(K)$ due to \cite{OS04a}. Looking in $\overline{T}_M$, genus detection manifests itself in $\widehat{\textit{HF}}(M)$ by ensuring that some curve component crosses at height $g(K)$. The immersed curves also satisfy a very powerful constraint related to a conjugation symmetry. For invariants of knot complements of $S^3$, this means that the curves are invariant under rotation by $\pi$.

\begin{theorem}[{\cite[Theorem 7]{HRW22}}] 
The invariant $\widehat{\textit{HF}}(M)$ is symmetric under the elliptic involution of $\partial M$. Here, the involution is chosen so that $z$ is a fixed point.
\label{thm:involution}
\end{theorem}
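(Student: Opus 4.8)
The plan is to derive the symmetry from the conjugation symmetry of closed Heegaard Floer homology by way of the pairing theorem, exploiting two elementary geometric features of the elliptic involution $\tau$ of $\partial M$ with $\tau(z)=z$. First, $\tau$ acts on $H_1(\partial M)$ as $-\mathrm{Id}$, so it fixes the isotopy class of every essential simple closed curve on $\partial M$, hence every slope. Second, $\tau$ extends over any solid torus $V=S^1\times D^2$ glued to $M$: the map $(\theta,w)\mapsto(\overline{\theta},\overline{w})$ is an orientation-preserving self-diffeomorphism of $V$ restricting on $\partial V$ to a representative of $\tau$. Consequently, for every slope $\gamma$ the closed manifold obtained by filling $M$ along $\gamma$, and the one obtained by filling along $\gamma$ after precomposing the parametrization of $\partial M$ with $\tau$, are diffeomorphic via the map that is the identity on $M$ and the above involution on $V$; together with the conjugation symmetry $\widehat{\textit{HF}}(Y,\mathfrak{s})\cong\widehat{\textit{HF}}(Y,\overline{\mathfrak{s}})$ this shows the two fillings have isomorphic $\mathrm{spin}^c$-decomposed, relatively graded Floer homology.

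Next I would pass to immersed curves. Precomposing the boundary parametrization of $M$ with a self-diffeomorphism of $(\partial M,z)$ amounts to applying that diffeomorphism to the multicurve in $T_M$, so, writing $\gamma(M)$ for $\widehat{\textit{HF}}(M)$, reparametrizing by $\tau$ replaces $\gamma(M)$ by $\tau_\ast\gamma(M)$ (the direction immaterial since $\tau$ is an involution), and the claim is precisely that $\tau_\ast\gamma(M)$ is regularly homotopic to $\gamma(M)$. By the pairing theorem, pairing $\tau_\ast\gamma(M)$ with the immersed curve of the filling solid torus of a slope $\gamma$ computes $\widehat{\textit{HF}}$ of the $\tau$-reparametrized $\gamma$-filling, which by the first paragraph matches the pairing of $\gamma(M)$ with the same curve, summand by summand over $\mathrm{spin}^c$ structures and with relative gradings. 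Hence $\tau_\ast\gamma(M)$ and $\gamma(M)$ have the same $\mathrm{spin}^c$-refined, relatively graded intersection data with every rational line in $T_M$.

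The remaining and genuinely delicate step is to promote this coincidence to a regular homotopy. Here I would invoke the rigidity of honest immersed-curve invariants: a loop-type multicurve with trivial local systems, together with its relative grading and $\mathrm{spin}^c$ refinement, is determined up to regular homotopy by its graded, $\mathrm{spin}^c$-decomposed intersection numbers with all rational lines --- a statement in the spirit of the classical fact that a simple closed curve on the torus is pinned down by its intersection numbers with a few slopes, and one that (via minimal-position arguments) underlies the pairing theorem itself. An alternative route, bypassing this rigidity input, is to argue directly at the chain level that $\widehat{\textit{CFD}}(M)$ is invariant under the conjugation of the torus-algebra labels corresponding to $\tau$ under the Hanselman--Rasmussen--Watson dictionary, this being the bordered shadow of the ordinary conjugation symmetry (obtained, in the closed case, by flipping a bordered Heegaard diagram). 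Either way the crux is this final identification; the two geometric observations of the first paragraph then close the argument and, in coordinates making $\tau$ rotation by $\pi$, specialize to the $\pi$-rotational symmetry of the lifted invariant invoked later for knot complements.
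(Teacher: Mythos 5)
This theorem is imported verbatim from Hanselman--Rasmussen--Watson (the paper gives no proof, only the citation), so your argument must stand on its own, and as written it has a genuine gap precisely at the step you yourself flag as delicate. The first two paragraphs are fine: the elliptic involution preserves every slope, extends over the filling solid torus, and together with $\text{spin}^{c}$ conjugation symmetry of closed $\widehat{\textit{HF}}$ this shows that $\tau_\ast\gamma(M)$ and $\gamma(M)$ have isomorphic relatively graded, $\text{spin}^{c}$-decomposed pairings with every rational line $l^s_r$. But promoting this to a regular homotopy rests entirely on the asserted rigidity principle that a multicurve (even loop type with trivial local systems) is determined up to regular homotopy by its graded, $\text{spin}^{c}$-refined intersection Floer homology with all rational lines. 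That is not a known theorem, and the analogy with an embedded curve on the torus being pinned down by intersection numbers with a few slopes does not carry weight here: the invariant also records component multiplicities, grading-arrow weights, and in the generality of the theorem nontrivial local systems, and the pairing computes Lagrangian intersection Floer homology after cancelling inessential intersections rather than geometric intersection numbers. Moreover, the identification of the $\text{spin}^{c}$ summands of a filling with the lines $l^s_r$ is itself canonical only up to conjugation --- exactly the ambiguity in play --- so extracting the curve from filling data risks circularity with the very symmetry being proved. Establishing such a determinacy statement would be at least as hard as the theorem itself, so the main route does not close.

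Your alternative route --- chain-level invariance of $\widehat{\textit{CFD}}(M)$ under the torus-algebra automorphism corresponding to the elliptic involution, as the bordered shadow of conjugation symmetry (flipping a bordered Heegaard diagram) --- is indeed the right idea and is essentially how the result is established at the source, but in your write-up it is only gestured at: there is no identification of the algebra automorphism, no diagram manipulation, and no argument that the twisted type D structure is homotopy equivalent to the original. Note also that the theorem as stated covers arbitrary $M$ with torus boundary, including invariants carrying nontrivial local systems, whereas your rigidity claim is restricted to the loop-type, trivial-local-system case, so even granting it you would recover only a special case. As it stands, the proposal reduces the statement either to an unproved rigidity principle or to an unexecuted chain-level argument, and so it does not constitute a proof.
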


With a horizontally or vertically simplified basis for $\textit{CFK}^-(K)$ (see \cite[Section 3]{Hom17} for specifics regarding these bases that all knots admit), the procedure of \cite[Proposition 47]{HRW22}, which is the immersed curves version of \cite[Theorem 11.31]{LOT18b}, allows one to construct $\widehat{\textit{HF}}(M)$ from $\textit{CFK}^-(K)$. In the special case when $\textit{CFK}^-(K)$ is simultaneously horizontally and vertically simplified, $\textit{HFK}^{-}(K)$ is generated by pairing (see Theorem \ref{thm:ICpairing} below) $\widehat{\textit{HF}}(M)$ with $\overline{\mu}$ in $\overline{T}$, and the differentials are recovered using bigons containing modified lifts of the marked point. This is not much of a constraint for us, since thin knots always admit a simultaneously horizontally and vertically simplified basis due to \cite[Lemma 7]{Pet13}. In this Lemma, Petkova shows that when the vertical and horizontal arrows in $\textit{CFK}^-(K)$ have length one, then $\textit{CFK}^-(K)$ consists of acyclic box complexes $C$ and a staircase complex $C_l$. The following is a restatement of these conditions in immersed curves form.
\newpage

\begin{lemma}[{\cite[Lemma 7]{Pet13}}]\
If $K$ is thin, then the lifted curve invariant associated to $S^3 \setminus \nu K$ satisfies:
\begin{itemize}
\item The essential component $\overline{\gamma}$ winds between adjacent basepoints, the height of which is determined by $\tau(K)$, before ultimately wrapping around the cylinder (corresponding to the staircase complex $C_l$).
\item Every other component is a simple figure-eight, enclosing vertically adjacent basepoints (corresponding to the acyclic box complexes $C$).
\end{itemize}
\label{prop:PetkovaThin}
\end{lemma}

This lifted curve invariant also encodes numerical and concordance invariants of $K$. For example, the Seifert genus is given by the height of the tallest curve component by genus detection above. After following $\overline{\gamma}$ around the cylinder, the height of the first intersection that $\overline{\gamma}$ makes with $\overline{\mu}$ is precisely the Oszv\'ath-Szab\'o invariant $\tau(K)$. This is because this intersection corresponds to the distinguished generator of vertical homology whose Alexander grading is $\tau(K)$. Hom's $\epsilon$ invariant may also be determined by observing what $\overline{\gamma}$ does next. The essential curve either turns downwards, upwards, or continues straight corresponding to $\epsilon(K)$ being $1, -1,$ and $0$, respectively. These two invariants determine the slope $\overline{\gamma}$ outside of a thin vertical strip surrounding the lifts of the marked point, given by $2\tau(K) - \epsilon(K)$.

\begin{definition}
Let $e_n$ denote the number of simple figure-eight components at height $n$ of $\widehat{\textit{HF}}(M)$, viewed in $\overline{T}_M$.
\end{definition}

We have $e_{-n}=e_n$ due to Theorem \ref{thm:involution}, and Figure \ref{fig:g2t1ex} provides an example with $e_0 = 0$ and $e_{-1}=e_1=1$. Equipped with their properties, we now turn to the main reason for involving bordered invariants in the form of immersed curves. The following is the immersed curves reformulation of the bordered pairing theorem.

\begin{theorem}[{\cite[Theorem 2]{HRW22}}] Consider the gluing $M_1 \cup_h M_2$, where the $M_i$ are compact, oriented 3-manifolds with torus boundary and $h: \partial M_2 \rightarrow \partial M_1$ is an orientation reversing homeomorphism for which $h(z_2) = z_1$. Then
\[
\widehat{\textit{HF}}(M_1 \cup_h M_2) \cong \textit{HF}(\widehat{\textit{HF}}(M_1), h(\widehat{\textit{HF}}(M_2))),
\]
where intersection Floer homology is computed in $T_{M_1}$ and the isomorphism is one of relatively graded vector spaces that respects the $\text{Spin}^{c}$ decomposition. 
\label{thm:ICpairing}
\end{theorem}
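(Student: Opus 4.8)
The plan is to deduce this from the algebraic pairing theorem for bordered Heegaard Floer homology of Lipshitz, Ozsv\'ath, and Thurston \cite{LOT18b}, combined with the dictionary between loop-type bordered invariants and immersed multicurves from \cite{HRW16}. Recall that \cite{LOT18b} gives $\widehat{\textit{HF}}(M_1 \cup_h M_2) \cong H_*\bigl(\widehat{\textit{CFA}}(M_1) \boxtimes \widehat{\textit{CFD}}(M_2)\bigr)$ as relatively graded $\F$ vector spaces, and that after matching $\text{Spin}^{c}$ structures on the glued manifold with the appropriate idempotent and grading data on the two pieces this isomorphism respects the $\text{Spin}^{c}$ decomposition. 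So it suffices to reinterpret the right-hand side geometrically.

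First I would recall the structure theorem: since $M_1$ and $M_2$ are loop type, $\widehat{\textit{CFD}}(M_2)$ is a direct sum of ``loops'' over the torus algebra, and each loop is exactly the type D structure associated to an immersed curve (with trivial local system) in $T_{M_2} = \partial M_2 \setminus z$; the homeomorphism $h$ identifies $T_{M_2}$ with $T_{M_1}$, and similarly $\widehat{\textit{CFA}}(M_1)$ is recovered from the curve collection $\widehat{\textit{HF}}(M_1)$. To make this canonical one checks that the correspondence is independent of the chosen parameterization and invariant under regular homotopy of the curves, which on the algebraic side is reflected by homotopy equivalence of bordered invariants.

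The heart of the argument is the comparison of the box tensor product with Lagrangian intersection Floer homology in the punctured torus. Put the two multicurves $\widehat{\textit{HF}}(M_1)$ and $h(\widehat{\textit{HF}}(M_2))$ in minimal (taut) position, which is achieved by regular homotopy and hence changes neither side. One then sets up a bijection between generators of $\widehat{\textit{CFA}}(M_1) \boxtimes \widehat{\textit{CFD}}(M_2)$ and transverse intersection points of the two curves, by tracking which elementary pieces of each curve meet and reading off the associated idempotents; the induced differential on the box tensor product is identified with the count of immersed bigons between intersection points, so that $H_*\bigl(\widehat{\textit{CFA}}(M_1) \boxtimes \widehat{\textit{CFD}}(M_2)\bigr)$ computes precisely $\textit{HF}\bigl(\widehat{\textit{HF}}(M_1), h(\widehat{\textit{HF}}(M_2))\bigr)$. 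Finally one checks that the relative Maslov grading transported through the box tensor product agrees with the grading on intersection Floer homology coming from the relative index of intersection points (this is where the grading arrows and the winding of $\overline{\gamma}$ enter), and that the $\text{Spin}^{c}$ splitting on both sides matches the decomposition of the intersection points by relative homology class in the torus.

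The main obstacle is exactly this last comparison: converting the $\mathcal A_\infty$ module operations on $\widehat{\textit{CFA}}(M_1)$ and the coefficient maps of $\widehat{\textit{CFD}}(M_2)$ into a count of bigons between honest immersed curves. This forces one to choose sufficiently nice (``reduced'') representatives of the curves so that each algebra generator appearing in the box tensor product corresponds to an elementary arc of a bigon, and then to verify that concatenating these arcs produces all and only the index-one discs contributing to the intersection differential. Keeping the gradings and $\text{Spin}^{c}$ bookkeeping consistent across this translation is the delicate part; once it is in place, the isomorphism and all its refinements follow formally.
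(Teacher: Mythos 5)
The paper does not prove this statement: it is quoted verbatim as \cite[Theorem 2]{HRW18} and used as a black box, so there is no internal proof to compare against, only the original argument of Hanselman--Rasmussen--Watson. Your outline follows essentially the same route as theirs: start from the Lipshitz--Ozsv\'ath--Thurston pairing $\widehat{\textit{HF}}(M_1 \cup_h M_2) \cong H_*(\widehat{\textit{CFA}}(M_1) \boxtimes \widehat{\textit{CFD}}(M_2))$, invoke the structure theorem translating the bordered invariants into immersed multicurves in the punctured torus, and then identify the box tensor product with the Lagrangian intersection Floer complex of the two curve collections, matching differentials with bigon counts and tracking $\text{Spin}^{c}$ and relative Maslov data. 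So as a reconstruction of the cited proof's strategy it is faithful.

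Two caveats keep it from standing as a proof of the theorem as stated. First, you assume from the outset that $M_1$ and $M_2$ are loop type with trivial local systems; the theorem holds for all compact oriented $3$-manifolds with torus boundary, and the general case requires curves decorated with (possibly nontrivial) local systems, which your generator/bigon dictionary does not address. (For the knot complements and solid tori used in this paper the loop-type hypothesis is harmless, but the statement being proved is the general one.) Second, the step you yourself flag as the ``heart of the argument''---that after choosing reduced representatives every term of the box tensor differential corresponds to exactly one index-one immersed bigon and conversely, together with the fact that the reduced curves are unobstructed so that their intersection Floer homology is well defined and invariant under the regular homotopies you use to reach minimal position---is asserted rather than carried out, and this is precisely where the real content of the HRW proof lies. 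As a roadmap your proposal is correct; as a proof it defers the essential verification.
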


More precisely, $\textit{HF}(\widehat{\textit{HF}}(M_1), h(\widehat{\textit{HF}}(M_2)))$ decomposes over $\text{spin}^{c}$ structures and carries a relative Maslov grading on each $\text{spin}^{c}$ summand. Theorem \ref{thm:ICpairing} places these in correspondence with the $\text{spin}^{c}$ decomposition on $\widehat{\textit{HF}}(M_1 \cup_h M_2)$, and also ensures the relative Maslov gradings agree. This is best seen when viewing Dehn surgery as such a gluing, continuing to use $M$ for $S^3 \setminus \nu K$. We have $S^3_r(K) = M \cup_{h_r} (D^2 \times S^1)$ with $h_r$ the slope-$r$ gluing map. Then Theorem \ref{thm:ICpairing} provides 
\[
\widehat{\textit{HF}}(S^3_r(K)) \cong \textit{HF}(\widehat{\textit{HF}}(M), h_r(\widehat{\textit{HF}}(D^2 \times S^1))).
\]

The $\text{spin}^{c}$ decomposition is recovered by using $r$ vertically-adjacent lifts of $h_r(\widehat{\textit{HF}}(D^2 \times S^1)$, which is the precise number required to lift every intersection from $T_M$ to $\overline{T}_M$ without duplicates. This is motivated by the example in Figure \ref{fig:4surgeryT25example}, showing the pairing of curves that recovers $\widehat{\textit{HF}}(S^3_4(T(2,5)))$. The invariant for the solid torus simply consists of a horizontal essential curve, and so $h_4(\widehat{\textit{HF}}(D^2 \times S^1)$ is a slope $4$ curve in the punctured torus. We have four lifts of $h_4(\widehat{\textit{HF}}(D^2 \times S^1)$, each generating intersections in correspondence with the four $\text{spin}^{c}$ summands of $\widehat{\textit{HF}}(S^3_4(K))$. These lifts are selected at heights in correspondence with the selected representatives of $\Z/r\Z$ from Section \ref{sec:background}. These are $-1, 0, 1$, and $2$ for the example in Figure \ref{fig:4surgeryT25example}, and motivate the following definition when lifting further to the tiled-plane cover $\widetilde{T}$.

\begin{figure}[!ht]
\vspace{1\intextsep}
\labellist
\small\hair 2pt
\pinlabel $l^2_4$ at 305 235
\pinlabel $l^1_4$ at 340 235
\pinlabel $l^0_4$ at 375 210
\pinlabel $l^{-1}_4$ at 380 150
\pinlabel $\xrightarrow{\text{lift to} \,\, \overline{T}_M}$ at 190 120
\endlabellist
\centering
\includegraphics[scale=0.7]{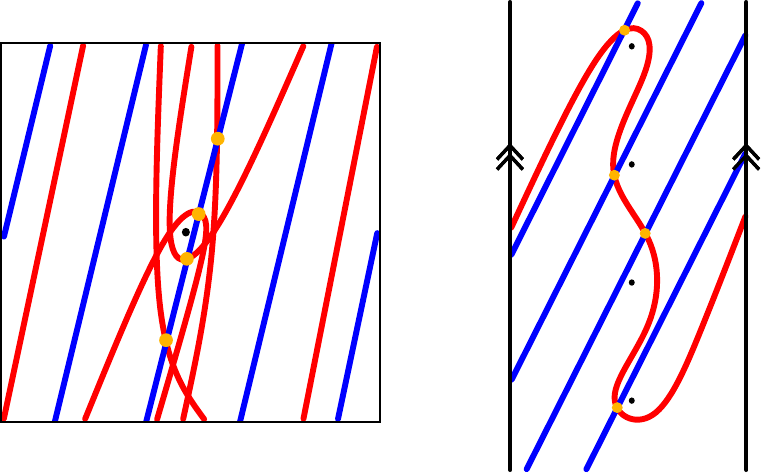}
\caption{The pairing of $\widehat{\textit{HF}}(S^3 \setminus \nu T(2,5))$ and $h(\widehat{\textit{HF}}(D^2 \times S^1))$, whose intersection Floer homology is $\widehat{\textit{HF}}(S^3_4(T(2,5))$.} 
\label{fig:4surgeryT25example}
\end{figure}

\begin{definition}
Let $l^s_r = h_r(\widehat{\textit{HF}}(D^2 \times S^1)$ denote the slope-$r$ line in $\widetilde{T}$ that crosses lifts $\widetilde{\mu}$ at heights congruent to $s \,\, (\text{mod} \,\, r)$. These are selected so that each $l^s_r$ crosses at height $s$ in the same column of $\widetilde{T}$, with $s$ taken to be the representative of $[s]$ that falls within the $\Z/r\Z$ range.
\label{def:spinclines}
\end{definition}

In this way, Theorem \ref{thm:ICpairing} implies
\[
\widehat{\textit{HF}}(S^3_r(K), [s]) \cong \textit{HF}(\widehat{\textit{HF}}(S^3 \setminus \nu K), l^s_r).
\]
As in the discussion following \cite[Theorem 14]{Han23a}, the Lagrangian intersection Floer homology has dimension equal to the minimal geometric intersection number of the immersed curves. In particular, using length-minimizing, or `pulled-tight', representatives for curve invariants by regular homotopy that avoid basepoints forces the differential to be identically zero, and so we may determine dim $(\widehat{\textit{HF}}(S^3_r(K),[s])$ by counting intersections between $\widehat{\textit{HF}}(M)$ and $l^s_r$. In general this count is modified by any immersed annuli cobounded by the paired curves, but this is only possible if $r=0$ since $S^3 \setminus \nu K$ is Seifert-framed. As $0$-surgery cannot yield a reducible manifold, no immersed annuli appear.

To incorportate the relative Maslov grading, we may compute grading differences between generators belonging to the same $\spinc$ structure using a formula from \cite{Han23a}. Suppose $x$ and $y$ are two intersections belonging to the same $[s] \in \text{Spin}^{c}(S^3_r(K))$, arising from intersections between $\widehat{\textit{HF}}(M)$ and $l^s_r$. Further, let $P$ be the bigon from $y$ to $x$ whose boundary consists of a (not necessarily smooth) path from $y$ to $x$ in $\widehat{\textit{HF}}(M)$, concatenated with a path from $x$ to $y$ in $l^s_r$. Defined this way, the boundary of $P$ is a closed path that is smooth apart from right corners at $x$ and $y$, and possibly one or more cusps (possible when traversing grading arrows between components of $\widehat{\textit{HF}}(S^3 \setminus \nu K)$. The following formula follows from the conversion of bordered invariants into immersed curves, keeping track of grading contributions from relevant Reeb chords \cite[Section 2.2]{HRW22}.

\begin{proposition}
Suppose $x, y$, and $P$ are defined as above. Let $\text{Rot}(P)$ denote $\frac{1}{2\pi}$ times the total counterclockwise rotation along the smooth sections of $P$. Alternatively this is $\frac{1}{2\pi}(2\pi - a\frac{\pi}{2} - c\pi)$, where $a$ denotes the number of corners and $c$ the number of cusps traversed. Let $\text{Wind}(P)$ denote the net winding number of $P$ around enclosed basepoints, and finally let $\text{Wght}(P)$ be the sum of weights (counted with sign) of all grading arrows traversed by $P$. Then
\[
M(x)-M(y) = 2\text{Wind}(P) + 2\text{Wght}(P) - 2\text{Rot}(P).
\]
\label{prop:grformula}
\end{proposition}

If $l^s_r$ intersects a simple figure-eight component at height $n$, it generates a \textit{right intersection} $y^n$ and a \textit{left intersection} $x^n$. Figure \ref{fig:arroworientations} shows off the three types of bigons that will typically appear. The first type has $P$ connecting a right and left intersection of the same simple figure-eight. The bigon encloses a single basepoint with positive winding number, total counterclockwise rotation along smooth sections as $\pi$, and no contribution from traversed grading arrows. These traits imply $M(x^n)-M(y^n)=1$. The second and third types are the more interesting ones, and have the same winding number of enclosed basepoints, but the rotation and grading arrow contributions to $M(y^n)-M(a^s)$ initially appear to be different. We will see later that for these bigons, the $2\text{Wght}(P)-2\text{Rot}(P)$ component of the grading difference is the same.

\begin{figure}[!ht]
\labellist
\small\hair 2pt
\pinlabel $(a)$ at 35 -10
\pinlabel $x^n$ at -2 60
\pinlabel $y^n$ at 78 60
\pinlabel $(b)$ at 155 -10
\pinlabel $a^s$ at 105 87
\pinlabel $y^n$ at 160 157
\pinlabel $(c)$ at 285 -10
\pinlabel $a^s$ at 235 87
\pinlabel $y^n$ at 290 157
\endlabellist
\centering
\includegraphics[scale=0.8]{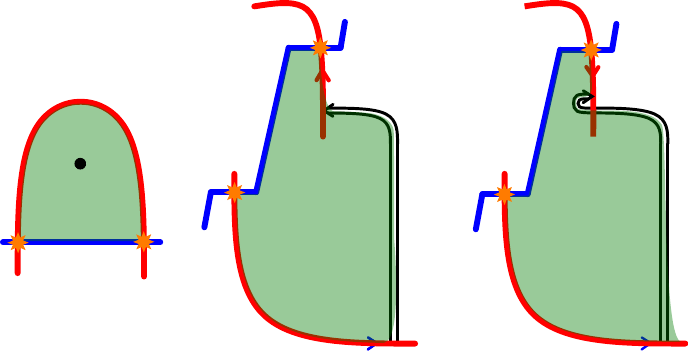}
\vspace{1\intextsep}
\caption{Bigons between intersections of $\widehat{\textit{HF}}(M)$ (in red) and $l^s_r$ (in blue) that are used to determine the relative Maslov grading. Example (a) does not involve a grading arrow, while (b) (without a cusp) and (c) (with a cusp) do.} 
\label{fig:arroworientations}
\end{figure}

\section{Thin knots and Maslov grading differences}
\label{sec:gradings}

Throughout this section, let $K$ be a thin knot and let $M$ denote $S^3 \setminus \nu K$. To enable swift grading comparisons later on, let us designate a reference intersection associated to $[s] \in \text{Spin}^{c}(S^3_r(K))$. We will define a \textit{vertical intersection} to be an intersection between $l^s_r$ and a vertical segment of $\overline{\gamma}$ within the neighborhood of $\overline{\mu}$, provided they exist. If $s$ satisfies $0 \leq |s| < |\tau(K)|$, then such an intersection occurs and we will denote it using $a^s$. Alternatively, if $|s| \geq \tau(K) \geq 0$ then any intersection between $l^s_r$ and $\overline{\gamma}$ is outside any neighborhood of the lifts of the marked point in $\overline{T}_M$. In this case $l^s_r$ intersects $\overline{\gamma}$ once if $\tau(K) \geq 0$, and so $a^s$ will denote this lone intersection. When $\tau(K) < 0$ and $s \geq 0$, we let $a^s$ denote the intersection between $l^r_s$ and $\overline{\gamma}$ to the left of $\overline{\mu}$. Analogously when $\tau(K) < 0$ and $s < 0$, we will have $a^s$ be the intersection between $l^r_s$ and $\overline{\gamma}$ to the right of $\overline{\mu}$. It is likely helpful to reference Figure \ref{fig:referenceintersections} for these different possibilities. While cumbersome, this scheme allows us to label the interesction that often corresponds via the pairing theorem to a generator with the the least Maslov grading.

\begin{figure}[!ht]
\labellist
\small\hair 2pt
\pinlabel $(a:\,\tau(K)=0)$ at 53 -15
\pinlabel $(b:\,\tau(K)>0)$ at 200 -15
\pinlabel $(c:\,\tau(K)<0)$ at 348 -15
\pinlabel $\overline{\gamma}$ at -5 170
\pinlabel $\overline{\gamma}$ at 130 170
\pinlabel $\overline{\gamma}$ at 282 170
\pinlabel $l^s_r$ at 100 310
\pinlabel $l^s_r$ at 242 310
\pinlabel $l^s_r$ at 395 310
\pinlabel $l^{-s}_r$ at 415 260
\endlabellist
\centering
\includegraphics[scale=0.6]{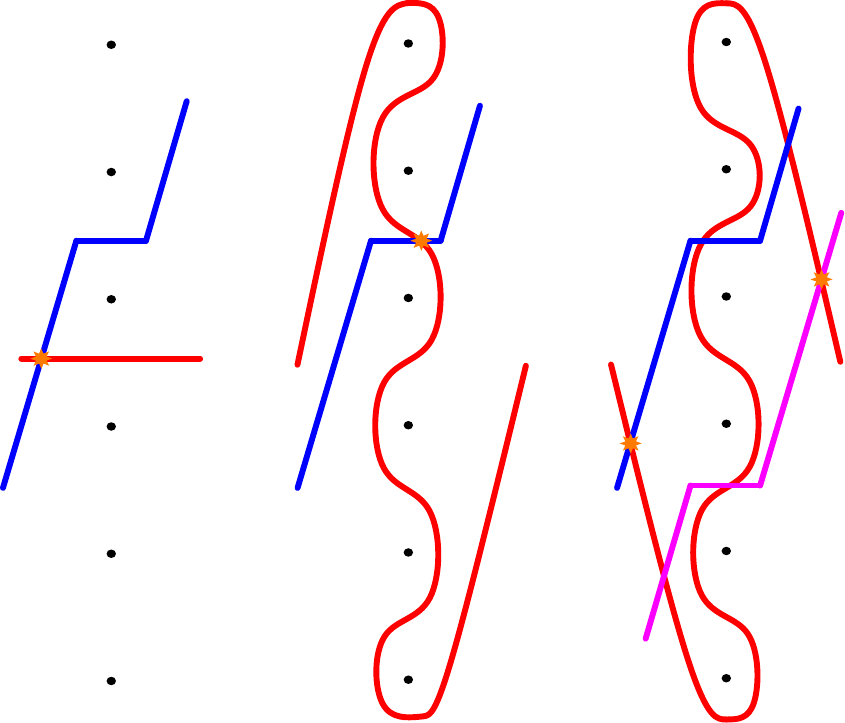}
\vspace{1\intextsep}
\caption{The possibilities for the reference intersection $a^s$ (denoted by stars). (c) has two curves representing $s \geq 0$ (intersection with the blue curve) and $s<0$ (intersection with the purple curve). The case when $\tau(K) > 0$ and $|s| \geq \tau(K)$ is similar to (a).}
\label{fig:referenceintersections}
\end{figure} 

It will also be particularly useful to know the winding number of enclosed lifts of the marked point of specific regions. Consider the neighborhood of $\overline{\mu}$ in $\overline{T}_M$ that contains the lifts of the marked points, which is also wide enough to enclose the vertical segments of $\overline{\gamma}$. Intersect $\overline{\gamma}$ with a horizontal line $l^s$ slightly longer than this neighborhood at height $s$, so that these segments together bound regions enclosing basepoints.

When $\tau(K) > 0$, we will define $H_s$ to be the number of enclosed lifts of the marked point in the region bounded above by $l^s$, on the side(s) by the neighborhood of $\overline{\mu}$, and below by $\overline{\gamma}$. If the region is empty, then $H_s=0$. Analogously, there is often a region where $l^s$ bounds from below and $\overline{\gamma}$ bounds from above, and so we will denote the number of enclosed lifts of the marked point of such a region by $V_s$. These regions are depicted in parts $a$ and $b$ of Figure \ref{fig:HVregions}, where green regions correspond to $H$'s and pink regions correspond to $V$'s. Due to Theorem \ref{thm:involution}, we have both $H_{-s} = V_s$ and $H_s-V_s = H_s-H_{-s} = \frac{1}{2}(s-(-s)) = s$.

\begin{figure}[!ht]
\labellist
\small\hair 2pt
\pinlabel $\overline{\gamma}$ at -15 250
\pinlabel $\overline{\gamma}$ at 150 305
\pinlabel $\overline{\gamma}$ at 315 185
\pinlabel $(a:\,\tau(K)=0)$ at 46 -20
\pinlabel $(b:\,\tau(K)>0)$ at 207 -20
\pinlabel $(c:\,\tau(K)<0)$ at 368 -20
\pinlabel $(d)$ at 529 -20
\endlabellist
\centering
\includegraphics[scale=0.45]{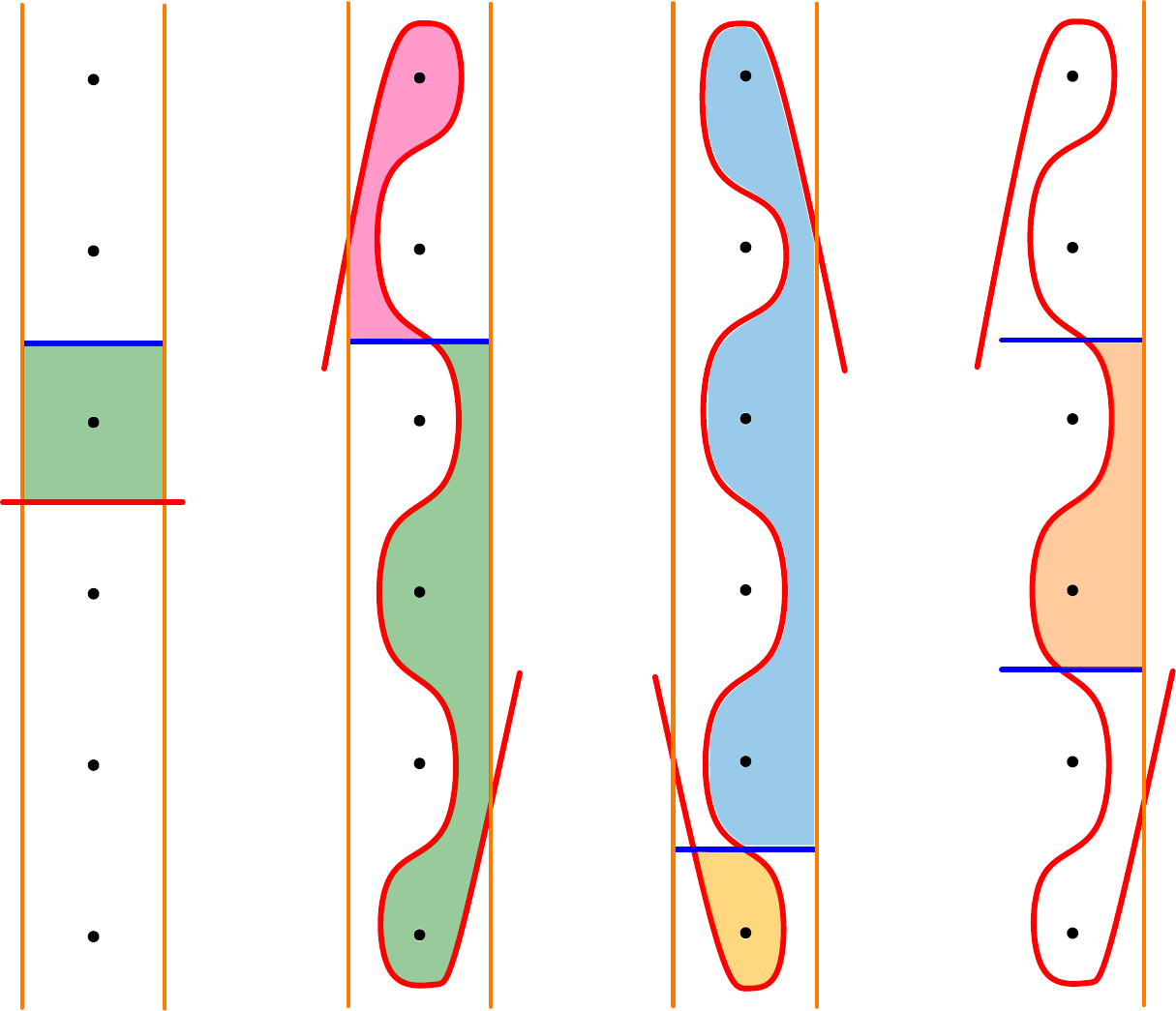}
\vspace{1\intextsep}
\caption{The regions in the discussion above whose winding numbers determine the $H$'s and $V$'s of a knot. Green regions correspond to $H$'s and pink regions correspond to $V$'s when $\tau(K) > 0$. Otherwise when $\tau(K) < 0$, the difference in winding numbers between the cyan and yellow regions are used. With $L_s$ counting the winding numbers for the yellow region and $U_s$ counting the winding numbers for the cyan region, we have $H_s = L_s-U_s$ and $V_s = 0$ for $s \geq 0$. The region in (d) exhibits $H_s-V_s=s$.}
\label{fig:HVregions}
\end{figure} 

\begin{remark}
This relationship between the $H$'s and $V$'s is no coincidence. In \cite{Han23b}, Hanselman establishes the $\textit{HF}^+$ immersed curves theory for knot complements of $S^3$, recovering the $+$-flavored mapping cone diagram. With simple enough curve invariants ($\overline{\gamma}$ makes no self-intersections - see \cite[Corollary 12.6]{Han23b}), the tower summands $\tau^+$ of the $A_s$'s and $B_s$'s correspond to specific intersections between $\overline{\gamma}$ and $l^s_r$. Additionally, the $V$'s and $H$'s then correspond to the number of lifts of the marked point in bigons between these specific intersections. In our case, slight pointed-homotopies of curves yield equivalent intersections that provide the regions above (see Figure \ref{fig:CurveCone}).
\end{remark}

\begin{figure}[!ht]
\centering
\includegraphics[scale=0.7]{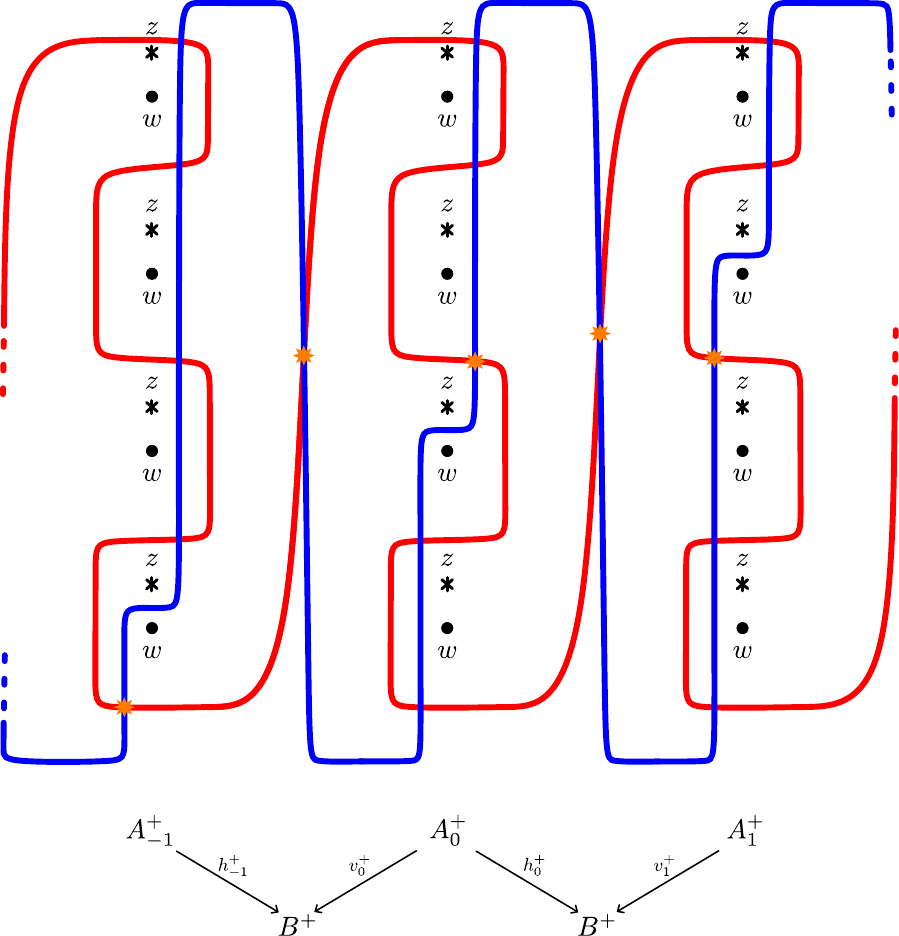}
\vspace{1\intextsep}
\caption{Modified curves $\widehat{\textit{HF}}(S^3 \setminus (T(2,5))))$ (in red) and $l_1$ (in blue) in $\widetilde{T}$ to recover the complexes and maps between them associated to $\textit{CFK}^{\infty}(T(2,5))$ in the mapping cone calculating $\widehat{\textit{HF}}(S^3_1(T(2,5)))$. Intersections corresponding to surviving generators in homology are represented with orange asterisks.}
\label{fig:CurveCone}
\end{figure} 

When $\tau(K) < 0$, multiple regions are needed to compute $H$'s and $V$'s since the base of the tower in $A^+_s$ no longer corresponds to an intersection $x$ with $A(x) = s$. The intersection corresponding to the base of the tower is similar to the reference intersection defined before Figure \ref{fig:referenceintersections}. When $s \geq 0$, the base of the tower in $A^+_s$ corresponds to a generator $x$ with $A(x) = -\tau(K)$. The bigon between it and the non-vertical intersection corresponding to the tower $B^+$ in the codomain of $v^+_s$ contains no marked points. On the other side, we traverse two bigons (split when the filling curve crosses $\overline{\mu}$ at height $s$) to reach the non-vertical intersection corresponding to the tower $B^+$ in the codomain of $h^+_s$. This agrees with the yellow and cyan bigons in Figure \ref{fig:HVregions}, and in short $V_s = 0$ and $H_s = s$ when $s \geq 0$. Alternatively when $s < 0$, the base of the tower in $A^+_s$ corresponds to a generator $x$ with $A(x) = \tau(K)$, and we likewise have $V_s = -s$ and $H_s = 0$. In proofs to come, we may use $U_s$ and $L_s$ to denote the number of enclosed marked points in the upper (cyan) bigon or the lower (yellow) bigon, where $H_s = L_s-U_s$ and $V_s = 0$ for $s \geq 0$. Also, it is clear that $L_s$ is an increasing function of $s$ and $U_s$ is a decreasing function of $s$ when their respective bigons are defined.

From the discussion in the previous section, we know that the form of $\widehat{\textit{HF}}(M)$ is very restricted. Our goal is to leverage this to constrain gradings on $\widehat{\textit{HF}}(S^3_r(K), [s]) \cong \textit{HF}(\widehat{\textit{HF}}(M), l^s_r)$ to obstruct reducible surgeries. We use multisets, which are sets with repitition allowed, to collect these relative Maslov gradings. As mentioned after Definition \ref{def:spinclines}, we will think of intersections $y \in \widehat{\textit{HF}}(M) \pitchfork l^s_r$ and generators $y$ of $\textit{HF}(\widehat{\textit{HF}}(M), l^s_r)$ interchangeably.

\begin{definition} Let $[s] \in \text{Spin}^{c}(S^3_r(K))$ be arbitrary with reference intersection $a^s$. For any generator $y$ of $\textit{HF}(\widehat{\textit{HF}}(M), l^s_r)$, let $M_{rel}(y)$ denote the grading difference $M(y)-M(a^s)$. We define the desired multiset by

\[
MR^{[s]} \coloneqq \left\{M_{rel}(y) \mid y \in \widehat{\textit{HF}}(M) \pitchfork l^s_r \right\}.
\]

Further, let $\text{Width}(MR^{[s]})$ denote the difference between the largest and smallest elements of this multiset.
\label{def:multiset}
\end{definition}

\begin{remark}
As defined, $MR^{[s]}$ is a collection of integral Maslov gradings differences, and so in general is not an invariant of the pair $(S^3_r(K), [s])$. However, $\text{Width}(MR^{[s]})$ is an invariant of the pair $(S^3_r(K), [s])$ since $MR^{[s]}$ can be made to agree with the multiset containing absolute Maslov gradings by uniformly translating all elements by some element of $\Q$. Likewise, the cardinality of $MR^{[s]}$ and multiplicities of its elements (after uniformly translating so that $0$ is the smallest element) are also invariants of the pair $(S^3_r(K), [s])$.
\end{remark}

Next, we establish lemmas that enable us to swiftly compute grading differences. For a bigon $P$ between intersections of $\widehat{\textit{HF}}(M)$ and $l^s_r$, we will determine the grading difference contribution of $2\text{Wght}(P)-2\text{Rot}(P)$. This is done by considering an analogous, regularly homotopic bigon $P_K$ between intersections of $\widehat{\textit{HF}}(M)$ and $\overline{\mu}$ that correspond to generators of $\widehat{\textit{HFK}}$ under pairing. We show that the quantities $2\text{Wght}(P)-2\text{Rot}(P)$ and $2\text{Wght}(P_K)-2\text{Rot}(P_K)$ agree, and computing the latter in terms of the knot Floer invariant $\tau(K)$. 

\begin{lemma}
Let $y^n$ be a right intersection belonging to a simple figure-eight at height $n$ of $\widehat{\textit{HF}}(M)$, let $a$ be an intersection from a different component of $\widehat{\textit{HF}}(M)$ and $l^s_r$, and suppose $P$ is a bigon between them. If $K$ is thin, then $2\text{Wght}(P)-2\text{Rot}(P) = -1 - \tau(K) - |n|$.
\label{lem:grweights}
\end{lemma}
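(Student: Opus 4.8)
The plan is to reduce the claim about the immersed-curve bigon $P$ to an entirely combinatorial statement about a model bigon $P_K$ attached to the knot Floer complex, exactly as the paragraph preceding the lemma advertises. First I would set up $P_K$: since $K$ is thin, $\textit{CFK}^-(K)$ is simultaneously horizontally and vertically simplified with all differentials of length one, so by \cite[Proposition 47]{HRW18} the curve $\widehat{\textit{HF}}(M)$ is assembled from the standard boxes (figure-eights) and the winding $\overline{\gamma}$ in a completely explicit way. The right intersection $y^n$ sits on a figure-eight enclosing vertically adjacent lifts of the marked point at heights roughly $n$ and $n\pm1$, and $a$ is a reference intersection on $\overline{\gamma}$. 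I would describe $P_K$ as the bigon in the $\widehat{\textit{HFK}}$-picture whose boundary runs along the generators/arrows of $\textit{CFK}^-$ realizing the path from $a$ to $y^n$; because every arrow has length one, the only rotation comes from the corners at $x$, $y$ and the cusps, and $\text{Wght}(P_K)$ is read off from how many length-one arrows (grading arrows) are traversed, each contributing weight according to Figure \ref{fig:gradingarrows}.

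Next I would carry out the bookkeeping for $P_K$. The contribution $2\text{Wght}(P_K)-2\text{Rot}(P_K)$ is independent of the winding-number term $2\text{Wind}(P_K)$, so it can be computed by following the boundary of $P_K$ from the reference point $a^s$ (on $\overline{\gamma}$, at the height dictated by $\tau(K)$) out to the figure-eight at height $n$. Traveling from height $\tau(K)$-ish up or down to height $n$ forces the path to traverse $|n|-|\tau(K)|$-many unit cells, and in each the smooth rotation and the grading-arrow weights cancel in pairs except for a fixed defect; stacking these, together with the $-1$ coming from the single right corner at $y^n$ versus $a^s$ (as in the model computation giving $M(x^n)-M(y^n)=1$ for case (a) of Figure \ref{fig:arroworientations}), yields the target value $-1-\tau(K)-|n|$. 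I would organize this as: (i) compute it for the base case $n=\tau(K)$ (or the smallest relevant $|n|$) directly from the local picture near $\overline{\mu}$; (ii) show each step increasing $|n|$ by one subtracts exactly $1$ from $2\text{Wght}-2\text{Rot}$, by a local analysis of one figure-eight cell; (iii) conclude by induction. The sign conventions — counterclockwise rotation positive, weights counted with sign, and the orientation of $\overline{\gamma}$ relative to the figure-eights — are where the $-\tau(K)$ (as opposed to $-|\tau(K)|$ or $+\tau(K)$) enters, so I would pin those down using Theorem \ref{thm:involution} to cross-check that the answer is consistent under $n \mapsto -n$ combined with the conjugation symmetry.

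Finally I would promote the statement from $P_K$ to $P$. The point of \cite[Proposition 47]{HRW18} and \cite[Section 2.2]{HRW18} is that passing from the knot Floer complex to the immersed multicurve does not change the Reeb-chord grading contributions: the rotation and the grading-arrow weights of the immersed bigon $P$ agree with those of $P_K$, because the regular homotopy that straightens the curves only alters $\text{Wind}(P)$, not $\text{Wght}(P)-\text{Rot}(P)$. So $2\text{Wght}(P)-2\text{Rot}(P) = 2\text{Wght}(P_K)-2\text{Rot}(P_K) = -1-\tau(K)-|n|$, which is the claim. I expect the main obstacle to be step (ii)–(iii) of the middle paragraph: carefully checking that the per-cell contribution to $2\text{Wght}-2\text{Rot}$ is a clean $-1$ regardless of whether the figure-eight sits above or below $\tau(K)$ and regardless of the local orientation of $\overline{\gamma}$, i.e. that no stray $\pm1$ from a cusp or a corner is being miscounted. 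The cleanest way to neutralize that risk is to verify the formula against the worked example in Figure \ref{fig:g2t1ex} (where $g(K)=2$, $\tau(K)=1$, and the single figure-eight sits at height $\pm1$) and against the $T(2,5)$ surgery picture in Figure \ref{fig:4surgeryT25example}, and then argue the general case is the same local move repeated.
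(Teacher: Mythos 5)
There is a genuine gap in the middle step, and it is precisely where the lemma's content lives. Your plan is to compute $2\text{Wght}(P_K)-2\text{Rot}(P_K)$ by a local, cell-by-cell induction (``each step increasing $|n|$ by one subtracts exactly $1$''). But $\text{Wght}(P)$ is not an accumulating geometric quantity: it is the signed weight of the grading arrow(s) the path traverses, and those integral weights are extra decorations on the multicurve determined by the Maslov gradings of the bordered invariant --- they are not visible in the local picture of a figure-eight cell, and no amount of regular-homotopy or rotation bookkeeping determines them. So the claimed per-cell defect of $-1$ is exactly the unknown you are trying to compute, and the induction has no base-case mechanism for producing it. Checking the formula on Figure \ref{fig:g2t1ex} or the $T(2,5)$ picture would only confirm examples, not supply the missing argument. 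Relatedly, you invoke thinness only through the structural statement that $\textit{CFK}^-(K)$ is simplified with length-one differentials; you never use the defining property $\delta = A - M$ constant, yet that is the only place where the specific dependence $-1-\tau(K)-|n|$ on $\tau(K)$ and $n$ can come from.

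The paper's proof gets around this by computing the combination $2\text{Wght}-2\text{Rot}$ \emph{indirectly}: it pairs $\widehat{\textit{HF}}(M)$ with the lifted meridian $\overline{\mu}$, so that by \cite[Theorem 51]{HRW18} the intersections compute $\widehat{\textit{HFK}}(K)$; the figure-eight at height $n$ produces a generator $\eta$ with $A(\eta)=n+1$, and thinness gives the Maslov difference for free, $M(\eta)-M(a^{-\tau(K)})=A(\eta)-A(a^{-\tau(K)})=A(\eta)+\tau(K)$. Since the winding term $\text{Wind}_w(P_K)=A(\eta)+\tau(K)$ is computable from the picture, the grading-difference formula can be solved for $2\text{Wght}(P_K)-2\text{Rot}(P_K)=-1-\tau(K)-n$ without ever knowing the arrow weights individually (with a separate bookkeeping case when $\tau(K)<0$ and the bigon and arrow are traversed in reverse, where $\text{Rot}(P_K)=0$ because of the cusp). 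Only after that does one transfer to the surgery bigon $P$, noting that $P$ traverses the same grading arrow (so $\text{Wght}$ agrees) and that tilting shows $\text{Rot}(P)=\text{Rot}(P_K)$; your final paragraph captures this last transfer step adequately, but without the meridian-pairing-plus-thinness computation feeding into it, the value being transferred is unestablished.
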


\begin{proof}
In the infinite cylinder $\overline{T}_M$, we can represent $\overline{\mu}$, the lift of the meridian of $T_M$, as the vertical line that pierces each lift of the marked point in $\overline{T}_M$. Let $a^{-\tau(K)}$ be the last intersection that $\overline{\gamma}$ makes with $\overline{\mu}$ before wrapping around $\overline{T}_M$. Because $\widehat{\textit{HF}}(M)$ is invariant under the action by the hyperelliptic involution, the weights of the grading arrows connecting $\overline{\gamma}$ to the simple figure-eights at heights $n$ and $-n$ are equivalent. From this we can assume that $n$ is non-negative, and use $|n|$ in future formulas otherwise. 

Lift $\widehat{\textit{HF}}(M)$ to $\overline{T}$ for convenience, and intersect it with $\overline{\mu}$. If we place $z$ and $w$ basepoints to the left and right, respectively, of every lift of the marked point, then $\widehat{\textit{HFK}}(K) \cong \textit{HF}(\widehat{\textit{HF}}(M), \overline{\mu})$ due to \cite[Theorem 51]{HRW22}. This pairing is depicted in Figure \ref{fig:arrowweightsall}. The formula in Proposition \ref{prop:grformula} still holds with the adjustment that $\text{Wind}$ is modified to count the net winding number of enclosed $w$ basepoints, denoted $\text{Wind}_{w}$.

\begin{figure}[!ht]
\labellist
\small\hair 2pt
\pinlabel $(a)$ at 85 -25
\pinlabel $(b)$ at 300 -25
\pinlabel $(c)$ at 505 -25
\pinlabel $\eta$ at 95 530
\pinlabel $a^{-\tau(K)}$ at 125 10
\pinlabel $\eta$ at 310 530
\pinlabel $a^{-\tau(K)}$ at 265 400
\pinlabel $\eta$ at 525 260
\pinlabel $a^{-\tau(K)}$ at 550 400
\endlabellist
\centering
\includegraphics[scale=0.45]{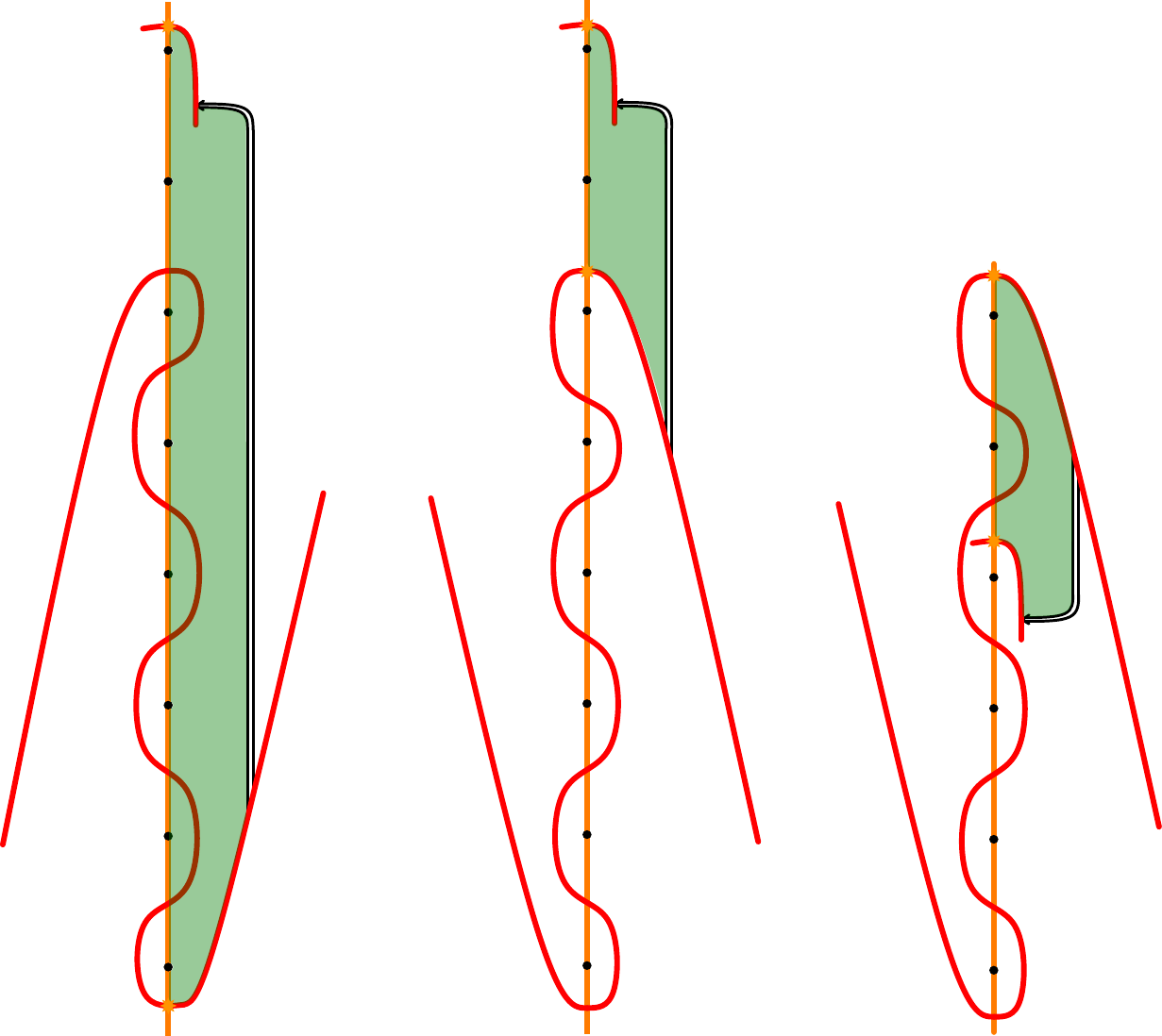}
\vspace{1\intextsep}
\caption{The bigon $P_K$ between $a^{-\tau(K)}$ and $\eta$, formed from path components in $\widehat{\textit{HF}}(M)$ and $\overline{\mu}$. $(a)$ shows this for $\tau(K) \geq 0$ and $(b)$ shows this for $A(\eta) > -\tau(K) > 0$. However for $(c)$ with $A(\eta) \geq -\tau(K) > 0$, the bigon $P_K$ runs from $\eta$ to $a^{-\tau(K)}$.}
\label{fig:arrowweightsall}
\end{figure}

Since $\widehat{\textit{HF}}(M)$ has a simple figure-eight component at height $n$, there must be a generator $\eta$ of $\widehat{\textit{HFK}}(K)$ with $A(\eta)=n+1$. Let $P_K$ be the bigon from $a^{-\tau(K)}$ to $\eta$ that traverses the grading arrow connecting the relevant components of $\widehat{\textit{HF}}(M)$, visible in Figure \ref{fig:arrowweightsall} with $\tau(K) \geq 0$ and $\tau(K) < 0$, respectively. To determine $\text{Wght}(P_K)$ directly would require care for the orientations of the grading arrow. However since we are after a different term, we can abuse notation by having every grading arrow connect to the right side of a simple figure-eight, regardless of its orientation. Essentially, any change that $\text{Wght}(P_K)$ experiences between the two ways of attaching the grading arrow is inverted and absorbed by $\text{Rot}(P)$, so that $2\text{Wght}(P_K)-2\text{Rot}(P_K)$ remains unchanged. 

If $\tau(K) \geq 0$ so that $A(a^{-\tau(K)}) < n$, we have
\[
M(\eta)-M(a^{-\tau(K)}) =  2\text{Wind}_{w}(P_K)+2\text{Wght}(P_K)-2\text{Rot}(P_K).
\]
However since $K$ is thin, it follows that
\[
M(\eta)-M(a^{-\tau(K)}) =A(\eta)-A(a^{-\tau(K)}) = A(\eta) + \tau(K).
\]
Then $2\text{Wght}(P_K)-2\text{Rot}(P_K) = A(\eta) - 2\text{Wind}(P_K) + \tau(K)$. Since $\text{Wind}(P_K) = A(\eta) + \tau(K)$, we have $2\text{Wght}(P_K)-2\text{Rot}(P_K) = -A(\eta) - \tau(K) = -1 - \tau(K) - n$. 

If $\tau(K) < 0$, the above computation follows through for $A(\eta) > -\tau(K)$, but the case for $A(\eta) \leq -\tau(K)$ differs slightly. In this situation $P_K$ is a bigon from $\eta$ to $a^{-\tau(K)}$ that also traverses the grading arrow in reverse, visible in Figure \ref{fig:arrowweightsall}. Traveling the grading arrow in reverse means that we have $M(a^{-\tau(K)})-M(\eta) = 2\text{Wind}_{w}(P_K)-2\text{Wght}(P_K)-2\text{Rot}(P_K)$, and so 
\begin{align*}
-2\text{Wght}(P_K) -2\text{Rot}(P_K) &= M(a^{-\tau(K)}) - M(a^{\eta}) - 2\text{Wind}_{w}(P)\\
&= -\tau(K) - (n+1) -2(-\tau(K) -(n+1))\\
&= 1+\tau(K)+n.
\end{align*}
Due to the shape of $P_K$, the bigon has a cusp near the grading arrow regardless of how it connects these components, and so $\text{Rot}(P_K)=0$. Then we have $2\text{Wght}(P_K)-2\text{Rot}(P_K) = 2\text{Wght}(P_K)+2\text{Rot}(P_K) = -1-\tau(K)-n$, as claimed.

With the formula established for $P_K$, we will now show that it is satisfied for a bigon between generators of $\textit{HF}(\widehat{\textit{HF}}(M), l^s_r)$ with similar attributes. Let $y^n$ be a right intersection from the simple figure-eight at height $n$ and let $a$ be an intersection from a vertical segment of $\overline{\gamma}$ and $l^s_r$. With $P$ denoting the bigon from $a$ to $y^n$, we see that $P$ must traverse the same grading arrow that $P_K$ traversed, and so $\text{Wght}(P)=\text{Wght}(P_K)$. Additionally, it is straightforward to see that $\text{Rot}(P)=\text{Rot}(P_K)$ after tilting the bigons as well, with visual given in Figure \ref{fig:tiltingbigons}. This completes the proof.
\end{proof}

\begin{figure}[!ht]
\labellist
\small\hair 2pt
\pinlabel $(a)$ at 35 -15
\pinlabel $(b)$ at 200 -15
\pinlabel $\eta$ at 5 185
\pinlabel $a^{-\tau(K)}$ at -1 25
\pinlabel $y^n$ at 240 140
\pinlabel $a$ at 142 80
\endlabellist
\centering
\includegraphics[scale=0.8]{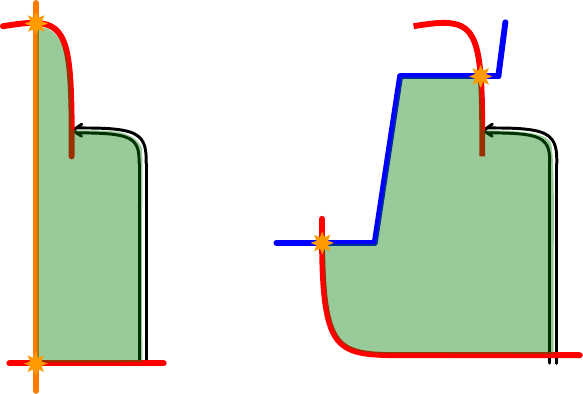}
\vspace{1\intextsep}
\caption{Tilting bigons to show they have equivalent net clockwise rotation along their boundaries. (a) The bigon $P_K$ from $a^{-\tau(K)}$ to $\eta$. (b) The bigon $P$ from $a$ to $y^n$.}
\label{fig:tiltingbigons}
\end{figure}

The following proposition considers left and right intersections of a simple figure-eight whose height $n$ is less than $|\tau(K)|$. There is then a nearby vertical intersection $a^n$, and we will see that these three intersections have little difference in grading.

\begin{proposition}
Let $K$ be thin and have $M$ denote $S^3 \setminus \nu K$. Further, let $x^n$ and $y^n$ be left and right intersections belonging to a simple figure-eight of $\widehat{\textit{HF}}(M)$ with height $0 \leq n < |\tau(K)|$, and let $a^n$ be the nearby vertical generator. Then $-1 \leq M(y^n) - M(a^n) \leq 0$ and $0 \leq M(x^n)-M(a^n) \leq 1$. 
\label{prop:closegens}
\end{proposition}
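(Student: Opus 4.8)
The plan is to collapse the statement to a single Maslov grading difference and then to a count of enclosed lifts of the marked point.

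\emph{First, a reduction.} The type-$(a)$ bigon of Figure~\ref{fig:arroworientations} has boundary running between $x^n$ and $y^n$ along the figure-eight and back along $l^s_r$, encloses a single lift of the marked point with winding number $+1$, has $\text{Rot}=\tfrac12$, and crosses no grading arrow; Proposition~\ref{def:grformula} then gives $M(x^n)-M(y^n)=1$, exactly as recorded in the text following that figure. Hence $M(x^n)-M(a^n)=\bigl(M(y^n)-M(a^n)\bigr)+1$, so the bounds on $x^n$ follow from those on $y^n$, and it suffices to prove $-1\le M(y^n)-M(a^n)\le 0$. I would also invoke Theorem~\ref{thm:involution}: the elliptic involution interchanges the figure-eight at height $n$ with the one at height $-n$, carries $a^n$ to $a^{-n}$ and $x^n,y^n$ to the intersections at height $-n$, and preserves relative Maslov gradings, so we may assume $n\ge 0$; the case $\tau(K)=0$ is then vacuous, and we are left with $0\le n<\tau(K)$ or $0\le n<-\tau(K)$.

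\emph{Next, pass from grading to a winding number.} Let $P$ be the bigon from $a^n$ to $y^n$ whose boundary leaves $a^n$ along $\overline{\gamma}$, crosses the unique grading arrow joining $\overline{\gamma}$ to the figure-eight at height $n$, follows that figure-eight to $y^n$, and returns to $a^n$ along $l^s_r$; as in the proof of Lemma~\ref{lem:grweights} I would take the grading arrow to meet the right side of the figure-eight. Since $y^n$ is a right intersection of the figure-eight at height $n$ and $a^n$ lies on a different component, Lemma~\ref{lem:grweights} gives $2\text{Wght}(P)-2\text{Rot}(P)=-1-\tau(K)-n$, and Proposition~\ref{def:grformula} then yields
\[
M(y^n)-M(a^n)=2\text{Wind}(P)-1-\tau(K)-n .
\]
So the entire statement reduces to the single inequality $\tau(K)+n\le 2\text{Wind}(P)\le\tau(K)+n+1$, i.e.\ to showing $2\text{Wind}(P)\in\{\tau(K)+n,\ \tau(K)+n+1\}$.

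\emph{The core, and the main obstacle.} What remains is to evaluate $\text{Wind}(P)$, the net winding number of $\partial P$ about the lifts of the marked point it encloses. Because $0\le n<|\tau(K)|$, both the figure-eight at height $n$ and the intersection $a^n$ sit inside the winding region of $\overline{\gamma}$, whose shape near $\overline{\mu}$ depends only on $\tau(K)$ — in the band of heights relevant here it is a nested family of arcs of $\overline{\gamma}$ crossing $\overline{\mu}$, and the line $l^s_r$, having slope $r\ge 2$, threads steeply through it. I would read off the region bounded by $\partial P$ (the part of the winding region lying between $a^n$ and the grading arrow, together with one lobe of the figure-eight at height $n$) and count its enclosed lifts of the marked point directly from this picture; the count depends only on $n$, on $\tau(K)$, and on the sign of $\tau(K)$, and I expect it to agree with one of the $\widetilde{H}_{n+1}$-type regions, giving $2\text{Wind}(P)=\tau(K)+n$ when $\tau(K)+n$ is even and $\tau(K)+n+1$ when it is odd, with $\widetilde{H}_s-\widetilde{V}_s=s$ serving as a useful cross-check. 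The hard part will be exactly this: pinning down the winding region of $\overline{\gamma}$ and which lifts of the marked point $\partial P$ encloses, carried out separately for the two signs of $\tau(K)$ and the two parities of $\tau(K)+n$, and checking that the cosmetic choice of which side of the figure-eight the grading arrow meets (made in order to invoke Lemma~\ref{lem:grweights}) leaves $\text{Wind}(P)$ unchanged. Everything else is formal bookkeeping with Proposition~\ref{def:grformula}, Lemma~\ref{lem:grweights}, and Theorem~\ref{thm:involution}.
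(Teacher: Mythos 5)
Your setup coincides with the paper's: reduce to the $y^n$ bounds via $M(x^n)-M(y^n)=1$ from the type-$(a)$ bigon, use Theorem \ref{thm:involution} to take $n\ge 0$, and combine Proposition \ref{def:grformula} with Lemma \ref{lem:grweights} so that everything hinges on $\text{Wind}(P)$. But the decisive step is exactly the one you defer. The paper's proof is not ``formal bookkeeping plus a picture to be read off later'': its content is the identification $\text{Wind}(P)=\widetilde{H}_n$ (for $\tau(K)\ge 0$) together with the explicit thin-knot count
\[
\widetilde{H}_n=\frac{n+\tau(K)}{2}\ \text{ or }\ \frac{n+\tau(K)+1}{2}
\]
according to whether $\text{parity}(n)=\text{parity}(\tau(K))$, which comes from the staircase shape of $\overline{\gamma}$ in the winding region. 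You predict the right numerical outcome ($2\text{Wind}(P)\in\{\tau(K)+n,\ \tau(K)+n+1\}$, governed by parity), but you only ``expect'' it and explicitly flag the count as the hard part; as written the proposal establishes the reduction and not the proposition.

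There is also a genuine wrinkle in the $\tau(K)<0$ case that your uniform formula glosses over. If $\tau(K)<0$ and $0\le n<|\tau(K)|$, then $\tau(K)+n<0$, so the claimed identity $M(y^n)-M(a^n)=2\text{Wind}(P)-1-\tau(K)-n$ with a nonnegative winding number cannot produce values in $\{-1,0\}$. The reason is that for $\tau(K)<0$ the bigon runs the other way, from $y^n$ to $a^n$: it traverses the grading arrow in reverse (flipping the sign of the $\text{Wght}$ contribution in Proposition \ref{def:grformula}), has $\text{Rot}(P)=0$ because of the cusp, and encloses $\widetilde{V}_n=\widetilde{H}_{-n}$ lifts of the marked point, computed with $-\tau(K)$ in the formula above. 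The paper then gets $M(a^n)-M(y^n)=2\widetilde{H}_{-n}+1+\tau(K)+n\in\{0,1\}$, which gives the stated bounds. So your plan to ``carry out the count separately for the two signs of $\tau(K)$'' must also revise the reduction itself in the negative case, not just the region being counted; and the check that attaching the grading arrow to the right lobe does not affect $2\text{Wght}-2\text{Rot}$ is already built into Lemma \ref{lem:grweights}, so you need not redo it, but the bigon orientation is not a cosmetic choice and does change which difference the formula computes.
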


\begin{proof}
If $P$ is the bigon between $a^n$ and $y^n$, we have $2\text{Wght}(P)-2\text{Rot}(P) = -1 - \tau(K) - |n|$ due to Lemma \ref{lem:grweights}. Due to the hyperelliptic involution invariance of $\widehat{\textit{HF}}(M)$, we can take $0 \leq n < |\tau(K)|$. We have $\text{Wind}(P)$ is $H_n$ if $\tau(K) \geq 0$ or $U_n$ if $\tau(K) < 0$, the values of which depend on the parity of $n$ and $\tau(K)$ when $K$ is thin. The simple structure of $\overline{\gamma}$ for a thin knot together with a counting argument for $\tau(K) > 0$ yields

\begin{align*}
H_n = \left\{
	\begin{array}{l c}
	\dfrac{n+\tau(K)}{2} & \text{if parity}(n) = \, \text{parity}(\tau(K))\\
	\dfrac{n+\tau(K)+1}{2} & \text{if parity}(n) \neq \, \text{parity}(\tau(K)).\\
	\end{array}
	\right.
\end{align*}

Then for $\tau(K) > 0$ we have $M(y^n)-M(a^n) = 2H_n - 1 - \tau(K) - n$ implies $M(y^n)-M(a^n)$ is either $-1$ or $0$. Since $M(x^n)-M(y^n) = 1$, we see that $M(x^n)-M(a^n)$ is either $0$ or $1$, handling the $\tau(K) > 0$ case. 

When $\tau(K) < 0$, the bigon $P$ runs from $y^n$ to $a^n$, encloses $U_n$ lifts of the marked points, traverses the grading arrow in reverse, and has $\text{Rot}(P)=0$. Figure \ref{fig:HVregions} shows that $U_n$ with $\tau(K) < 0$ is the same as $V_{n} = H_{-n}$ with $\tau(K) \geq 0$, except using $-\tau(K)$ or $-\tau(K)-1$ in the formula above. Using Lemma \ref{lem:grweights} and the $-\tau(K)$ modified formula for $H_{-n}$, we have $M(a^n)-M(y^n) = 2H_{-n}+1+\tau(K)+n$. This is either $1$ or $0$, and so $M(y^n)-M(a^n)$ is either $-1$ or $0$ and analogously $M(x^n)-M(a^n)$ is either $0$ or $1$.
\end{proof}

Because $M(x^n)-M(y^n)=1$, these possibilities happen in pairs. A simple figure-eight at height $n<|\tau(K)|$ contributes either $\left\{M_{rel}(a^n), M_{rel}(a^n)-1, M_{rel}(a^n)\right\} \subseteq MR^{[s]}$ or\\
$\left\{M_{rel}(a^n), M_{rel}(a^n), M_{rel}(a^n)+1\right\} \subseteq MR^{[s]}$. An example of this to keep in mind is when looking at large surgery on the figure-eight knot $4_1$. In this situation we have $\left\{0, -1, 0\right\} = MR^{[0]}$, and the right intersection contributing $-1$ to $MR^{[0]}$ actually has the smallest relative Maslov grading. Proposition \ref{prop:closegens} then allows us to determine which intersection associated to $[s] \in \text{Spin}^{c}(S^3_r(K))$ has the smallest relative Maslov grading depending on $\text{parity}(\tau(K))$:\\

\begin{itemize}
\item If $\tau(K) \geq 0$, $\text{parity}(s) = \text{parity}(\tau(K))$, and there is a right intersection $y^s$, then $M_{rel}(y^s)=-1$ is the smallest relative grading of $MR^{[s]}$.\\
\item If $\tau(K) \geq 0$, $\text{parity}(s) = \text{parity}(\tau(K))$, and there is no simple figure-eight at height $s$, then $M_{rel}(a^s) = 0$ is the smallest relative grading of $MR^{[s]}$.\\
\item If $\tau(K) \geq 0$ and $\text{parity}(s) \neq \text{parity}(\tau(K))$, then $M_{rel}(a^s)=0$ is the smallest relative grading of $MR^{[s]}$.\\
\item If $\tau(K) < 0$, then $M_{rel}(a^s) = 0$ is the smallest relative grading of $MR^{[s]}$.\\
\end{itemize}

The last component of the grading difference formula in Proposition \ref{prop:grformula} to determine is $\text{Wind}(P)$. Lift both $\widehat{\textit{HF}}(M)$ and each $l^s_r$ to the tiled plane $\widetilde{T}$, and let the $0$th column be the neighborhood of the lift $\widetilde{\mu}$ for which each $l^s_r$ intersects $\widetilde{\mu}$ at height $[s]$. For $[s] \in \Z/r\Z$ define $w_s = \dfrac{n-[s]}{r}$, with $n$ the largest natural number satisfying $0 \leq n \leq g(K)-1$ and $n \equiv s \,\, (\text{mod} \,\, r)$. This number represents the number of columns of marked points in $\widetilde{T}$ between $a^s$ and a potential furthest right intersection $y^n$. Further, because the slopes we consider satisfy $r \leq 2g(K)-1$, we have $w_s \geq 0$. While it is certainly possible that a simple figure-eight component may not exist at this height, it is still sufficient for the following strategy to suppose otherwise.

\begin{proposition}
For a given $[s] \in \Z/r\Z$, let $a^s$ be the chosen reference intersection and $y^n$ be a right intersection of a furthest possible figure-eight component. If $\tau(K) \geq 0$, then
\[
\displaystyle \text{Wind}(P) = H_s + \sum_{i=1}^{w_s} (s+ir).
\]

\noindent If $\tau(K) < 0$, then
\begin{align*}
\displaystyle \text{Wind}(P) = \left\{
	\begin{array}{l c}
	\displaystyle \sum_{i=0}^{w_s} (s+ir) & [s] \geq 0\\
	\displaystyle \sum_{i=1}^{w_s} (s+ir) & [s] < 0,\\
	\end{array}
	\right.
\end{align*}
where all sums are taken to be zero if empty.
\label{prop:windformula}
\end{proposition}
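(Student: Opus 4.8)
The plan is to work entirely in the tiled plane $\widetilde{T}$, where $\widehat{\textit{HF}}(M)$ and the line $l^s_r$ have been lifted, and to compute $\text{Wind}(P)$ by slicing the bigon $P$ along the meridian lifts $\widetilde{\mu}_0, \widetilde{\mu}_1, \dots, \widetilde{\mu}_{w_s}$ that $P$ spans, writing $P_i$ for the piece of $P$ lying in the $i$th column. The boundary of $P$ consists of an arc in $\widehat{\textit{HF}}(M)$ (a sub-arc of $\overline{\gamma}$, then a grading arrow, then a sub-arc of the height-$n$ figure-eight) and a straight arc of slope $r$ along $l^s_r$; since both of these are monotone across columns, the enclosed lifts of the marked point in distinct columns are counted with the same sign, so $\text{Wind}(P) = \sum_i (\text{number of lifts of the marked point enclosed by } P_i)$. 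It then suffices to evaluate each column's contribution.

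For $\tau(K) \geq 0$ the $i=0$ term is immediate: inside the thin neighborhood of $\overline{\mu}$ the line $l^s_r$ only tilts by a negligible amount away from height $s$, so $P_0$ is exactly the region whose lifts of the marked point are counted by $\widetilde{H}_s$ (compare Figure \ref{fig:HVregions}(b)), giving the leading $\widetilde{H}_s$. For $\tau(K) < 0$ the reference intersection $a^s$ lies on $\overline{\gamma}$ just to the left of $\overline{\mu}$ when $[s]\geq 0$ and just to the right when $[s] < 0$; in the former case $P$ genuinely sweeps across column $0$ and, from the explicit thin form of $\overline{\gamma}$, that column contributes $s$ (the $i=0$ term of the sum), while in the latter case $P$ never meets column $0$, which is precisely why the range of summation drops to $i\geq 1$. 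The orientation and cusp bookkeeping in these $\tau(K)<0$ configurations is identical to that carried out for the bigon $P_K$ in the proof of Lemma \ref{lem:grweights}; alternatively one keeps the uniform leading term $\widetilde{H}_s$ throughout and afterwards evaluates $\widetilde{H}_s = s$ (for $[s]\geq 0$) or $\widetilde{H}_s = 0$ (for $[s]<0$) using $\widetilde{H}_s - \widetilde{H}_{-s} = s$ together with the shape of $\overline{\gamma}$.

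For the columns $i = 1, \dots, w_s$ the slope-$r$ line $l^s_r$ crosses $\widetilde{\mu}_i$ at height $s + ir$, and the claim is that $P_i$ wraps once around the full vertical run of lifts of the marked point lying below that height in column $i$, namely those at heights $\tfrac12, \tfrac32, \dots, s+ir-\tfrac12$, so $P_i$ contributes $s+ir$. Granting this, summation over $i$ yields $\text{Wind}(P) = \widetilde{H}_s + \sum_{i=1}^{w_s}(s+ir)$ for $\tau(K)\geq 0$, and the two displayed formulas for $\tau(K)<0$ follow as above; note the consistency check that when $w_s=0$ (so $y^n$ sits in the same column as $a^s$) this reduces to $\text{Wind}(P)=\widetilde H_s$, matching Proposition \ref{prop:closegens}. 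That each intermediate column contributes exactly $s+ir$, with no correction term, is the one substantive point: it amounts to showing that the $\overline{\gamma}$-arc of $\partial P$ passes beneath that entire run of lifts in every column it crosses. \textbf{This is the main obstacle}, and it is where both hypotheses enter essentially — thinness, so that $\overline{\gamma}$ has the rigid ``winds between $\overline{\mu}$-lifts, then straight of slope $2\tau(K)-\epsilon(K)$'' shape and every other component is a two-basepoint figure-eight, and the Matignon--Sayari bound $r\leq 2g(K)-1$, which guarantees $w_s\geq 0$ and that the furthest figure-eight height $n\leq g(K)-1$ does not overshoot the relevant part of $\overline{\gamma}$. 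Carrying this out is a counting argument of the same flavour as the one establishing the $\widetilde{H}_n$ formula in Proposition \ref{prop:closegens}, now run column by column rather than in a single column.
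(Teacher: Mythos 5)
Your overall skeleton (slice $P$ into column pieces $P_i$ and sum the contributions) is the same as the paper's, and your treatment of the $0$th column, including the $\tau(K)<0$ case split according to which side of $\overline{\mu}$ the reference intersection $a^s$ lies on, matches. But the one step you yourself flag as the main obstacle is both left unproven and, as stated, not fixable: you claim that in each intermediate column the $\overline{\gamma}$-arc of $\partial P$ passes \emph{beneath} the entire run of marked points at heights $\tfrac12,\dots,s+ir-\tfrac12$, so that $P_i$ encloses exactly those points with winding $+1$. This is false whenever $\tau(K)\neq 0$: the lift of $\overline{\gamma}$ to $\widetilde{T}$ is periodic under the column translation, so it re-enters its vertical winding region (heights between $-|\tau(K)|$ and $|\tau(K)|$) near \emph{every} meridian lift $\widetilde{\mu}_i$, not just in the $0$th column. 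Consequently, in an intermediate column some of the marked points below height $s+ir$ lie below the line but also below the relevant strands of $\overline{\gamma}$ (winding $0$), the positively enclosed set is the $\widetilde{H}_{s+ir}$-region of that column rather than the bottom run (e.g.\ it may contain the point at height $-\tfrac12$ instead of the one at $\tfrac12$), and when $s+ir<|\tau(K)|$ --- which does occur for intermediate columns when $\tau(K)$ is large relative to $r$ --- some points are enclosed with winding $-1$. So a ``pass beneath the whole run'' argument cannot produce the formula; the agreement of your count with the truth is an accident of cancellation.

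The paper's (very short) argument supplies exactly the missing ingredient: in each column $i\geq 1$ the boundary of $P$ consists of the full pass of $\overline{\gamma}$ through that column together with $l^s_r$ crossing at height $s+ir$, so the column contributes the \emph{net} count $\widetilde{H}_{s+ir}-\widetilde{V}_{s+ir}$, which equals $s+ir$ by the symmetry identity $\widetilde{H}_t-\widetilde{V}_t=t$ coming from Theorem \ref{thm:involution} (see Figure \ref{fig:HVregions}(d) and Figure \ref{fig:windexamples}); the same identity gives the $0$th-column contribution $s$ in the $\tau(K)<0$, $s\geq 0$ case. Relatedly, your parenthetical alternative ``$\widetilde{H}_s=s$ for $[s]\geq 0$ when $\tau(K)<0$'' is incorrect for $0\leq s<|\tau(K)|$; only the difference $\widetilde{H}_s-\widetilde{V}_s=s$ is valid there. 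Replacing your per-column claim by this $\widetilde{H}-\widetilde{V}$ bookkeeping closes the gap and recovers the paper's proof.
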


When $\tau(K) \geq 0$, the contribution to $\text{Wind}(P)$ from the $0$th column of $\widetilde{T}$ is $H_s$. The contribution from the $i$th column is $H_{s+ir}-V_{s+ir} = s+ir$, and is shown in Figure \ref{fig:windexamples}. When $\tau(K) < 0$, we have the two different reference intersections $a^s$ depending on $s$ influencing whether there is a contribution from the $0$th column. Regardless, in every column the contribution to $\text{Wind}(P)$ is $H_{s+ir}-V_{s+ir} = H_{s+ir} = s+ir$. Since these terms are always non-negative, it follows that the smallest relative grading belongs to an intersection in the $0$th column.

\begin{figure}[!ht]
\labellist
\small\hair 2pt
\pinlabel $\overline{\gamma}$ at -10 180
\pinlabel $a^s$ at 50 330
\pinlabel $y^n$ at 160 590
\pinlabel $l^s_r$ at 160 615
\pinlabel $\overline{\gamma}$ at 240 300
\pinlabel $a^s$ at 280 330
\pinlabel $y^n$ at 430 590
\pinlabel $l^s_r$ at 430 615
\pinlabel $(a)$ at 90 -15
\pinlabel $(b)$ at 350 -15
\endlabellist
\centering
\includegraphics[scale=0.55]{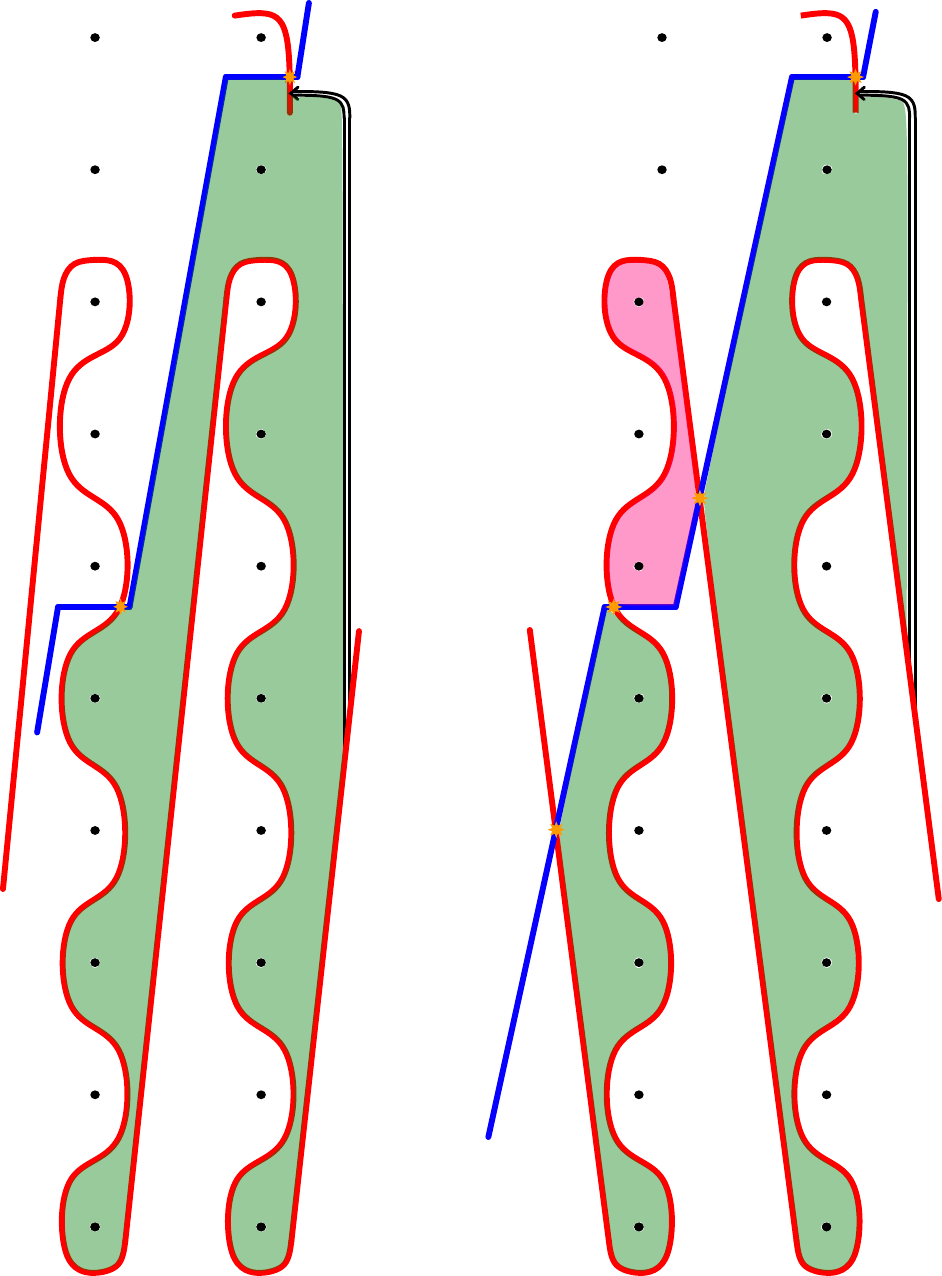}
\vspace{1\intextsep}
\caption{Example bigons $P$ between $a^s$ and $y^n$, showing the contributions from each column to $\text{Wind}(P)$ for $(a) \,\, \tau(K) \geq 0$ and $(b) \,\, \tau(K) < 0$ with $s \geq 0$.}
\label{fig:windexamples}
\end{figure}

\section{Initial cases with $|\tau(K)| < g(K)$}
\label{sec:relative}

Our objective is to build a collection of lemmas required to prove the main theorem. These vary depending on $r$ in relation to $g(K)$, and on $\tau(K)$ and its parity. The primary technique involves comparing the various values of $\text{Width}(MR^{[s]})$ to obstruct periodicity (see Lemma \ref{lem:periodicity}), typically done by showing that $\text{Width}(MR^{[s']})$ is maximal if $[s']$ is the $\text{spin}^{c}$ structure associated to the line that crosses height $g(K)-1$. At other times the widths will agree up to translation, but the multiplicity of specific elements of the grading multisets will not.

\begin{figure}[!ht]
\labellist
\small\hair 2pt
\pinlabel $l^s_r$ at -5 0
\pinlabel $l^{-s}_r$ at 200 540
\pinlabel $\mathcal{E}$ at 145 155
\endlabellist
\centering
\includegraphics[scale=0.65]{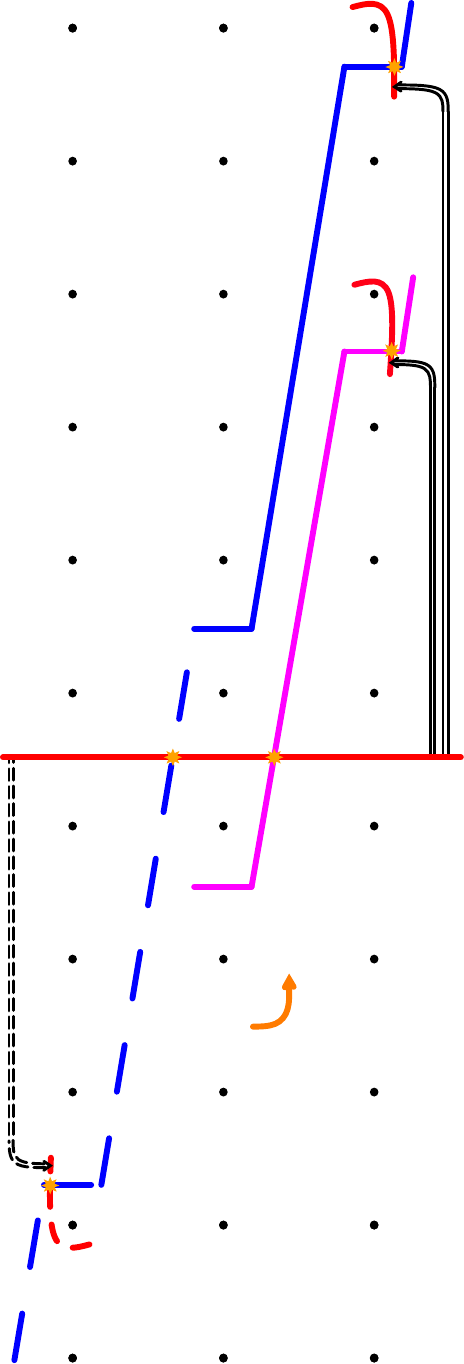}
\caption{The elliptic involution, denoted by $\mathcal{E}$, on $\partial M \setminus \{z\}$ affects the lift of $\widehat{\textit{HF}}(M)$ to the tiled plane by placing intersections of $\widehat{\textit{HF}}(M)$ and $l^s_r$ in negative columns (dashed blue line) in correspondence with intersections of $\widehat{\textit{HF}}(M)$ and $l^{-s}_r$ in positive columns (solid purple line).}
\label{fig:spincconjinv}
\end{figure}

Recall that Theorem \ref{thm:ICpairing} identifies $\widehat{\textit{HF}}(S^3_r(K), [s]) \cong \textit{HF}(\widehat{\textit{HF}}(M), l^s_r)$. In order to halve the amount of comparisons to make, we leverage the fact that $\widehat{\textit{HF}}(S^3_r(K), [s]) \cong \widehat{\textit{HF}}(S^3_r(K), [-s])$ \cite{OS04c}. In immersed curves form, Theorem \ref{thm:involution} implies that intersections between $\widehat{\textit{HF}}(M)$ and $l^s_r$ in negative columns of $\widetilde{T}$ are in correspondence with intersections of $\widehat{\textit{HF}}(M)$ and $l^{-s}_r$ that belong to positive columns of $\widetilde{T}$ (see Figure \ref{fig:spincconjinv}). Also, intersections associated to the self-conjugate $\text{spin}^{c}$ structure(s) $[0]$ (and possibly $[r/2]$) are symmetric in this way by default.

Recall that the smallest element of $MR^{[s]}$ is the relative grading of an intersection belonging to the $0$th column of $\widetilde{T}$, which is either the reference intersection $a^s$ or a nearby right/left intersection. This means that we can capture $\text{Width}(MR^{[s]})$ by considering non-negative intersections associated to both $[s]$ and $[-s]$. Note that since $\text{parity}([s])=\text{parity}([-s])$, the need to translate a multiset by 1 is consistent if it arises.
\begin{definition}
The multiset $MR^{[s]}_+$ consists of the relative gradings of intersections between $\widehat{\textit{HF}}(M)$ and $l^s_r$ that belong to non-negative columns of $\widetilde{T}$. We define $MR^{[s]}_-$ analogously, and notice that $\text{Width}(MR^{[s]}) = \text{max}\left\{\text{Width}(MR^{[s]}_+), \text{Width}(MR^{[s]}_-)\right\}$.
\end{definition}

Due to how genus detection is expressed by $\widehat{\textit{HF}}(M)$, either $\overline{\gamma}$ achieves height $g(K)$ (equivalent to $|\tau(K)|=g(K)$), or only a simple figure-eight at height $g(K)-1$ achieves this desired height (equivalent to $|\tau(K)| < g(K))$. We will divide the problem among these two cases, starting with the latter. The ensuing case analysis is admittedly complicated, but hopefully Figure \ref{fig:Sec4Cases} makes it more palatable.\\

\begin{figure}
\centering
\includegraphics[scale=0.75]{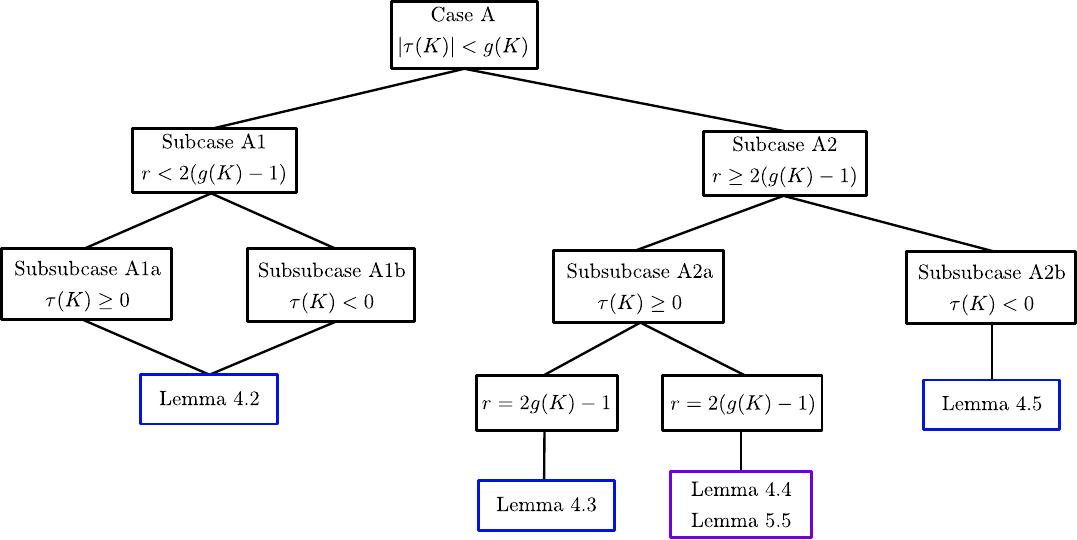}
\caption{The case flowchart for the arguments in this section. Blue boxes indicate that the contained lemmas only appeal to relative grading information, while purple boxes use some of the absolute grading material from Section \ref{sec:absolute}.}
\label{fig:Sec4Cases}
\end{figure}

\noindent \textbf{Case A: $\mathbf{|\tau(K)| < g(K)}$}. Since $|\tau(K)| < g(K)$, there exists a simple figure-eight component at height $g(K)-1$. Let $[s']$ be the $\text{spin}^{c}$ structure for which $l^{s'}_r$ intersects this simple figure-eight, which means $w_{s'} = \frac{g(K)-1-s'}{r}$. Our potential reducing slopes of $1 < r \leq 2g(K)-1$ divide this case into two subcases. When $r \geq 2(g(K)-1)$, we equivalently have $[s']=g(K)-1$ and $w_{s'} = 0$. Otherwise $r < 2(g(K)-1)$, or equivalently $w_{s'} > 0$, which is the easier starting point.\\

\noindent \textbf{Subcase A1: $r < 2(g(K)-1)$}. In this situation, we will show that $\text{Width}(MR^{[s']})$ is maximal.

\begin{lemma}
Suppose $K$ is thin, $|\tau(K)| < g(K)$, and $1 < r < 2(g(K)-1)$. Then there exists an $[s'] \in \text{Spin}^{c}(S^3_r(K))$ for which every $[s] \neq [\pm s']$ satisfies $MR^{[s]} \not \cong MR^{[s']}$ up to translation.
\label{lem:mainlem1}
\end{lemma}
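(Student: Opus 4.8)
\emph{The plan.} Take $[s']$ to be the $\text{Spin}^{c}$ structure for which $l^{s'}_r$ passes through the simple figure-eight at height $g(K)-1$; this component exists because $|\tau(K)|<g(K)$, and the hypothesis $1<r<2(g(K)-1)$ forces $w_{s'}=\frac{g(K)-1-s'}{r}\geq 1$. I will show that $\text{Width}(MR^{[s']})$ is \emph{strictly} maximal, i.e.\ strictly larger than $\text{Width}(MR^{[s]})$ for every $[s]\neq[\pm s']$. Since translating a multiset preserves its width, strict maximality immediately gives $MR^{[s]}\not\cong MR^{[s']}$ up to translation, and $[\pm s']$ is the correct exception because $MR^{[s']}\cong MR^{[-s']}$ automatically by Theorem \ref{thm:involution}.

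\emph{A closed form for the width.} First I would reduce to one-sided widths: using $\text{Width}(MR^{[s]})=\max\{\text{Width}(MR^{[s]}_+),\text{Width}(MR^{[s]}_-)\}$ together with the conjugation symmetry of Theorem \ref{thm:involution} in the form $\text{Width}(MR^{[s]}_-)=\text{Width}(MR^{[-s]}_+)$, it is enough to understand $\text{Width}(MR^{[s]}_+)$ for each class. By Proposition \ref{prop:closegens} and the bullet list following it, the smallest element of $MR^{[s]}_+$ comes from the $0$th column of $\widetilde{T}$ and equals $0$ or $-1$. The largest element is the relative grading of the left intersection $x^n$ of the furthest figure-eight that $l^s_r$ can meet below height $g(K)$, with $n=[s]+w_sr$ (if the genuine furthest figure-eight sits lower, the width only shrinks, which does no harm to the bound I want). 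Feeding Lemma \ref{lem:grweights} and Proposition \ref{prop:windformula} into the grading formula of Proposition \ref{def:grformula}, and using $M_{rel}(x^n)=M_{rel}(y^n)+1$, gives
\[
M_{rel}(x^n)=2\widetilde{H}_{[s]}+2\sum_{i=1}^{w_s}\bigl([s]+ir\bigr)-\tau(K)-n,
\]
which after substituting $n=[s]+w_sr$ and the parity formula for $\widetilde{H}_{[s]}$ from the proof of Proposition \ref{prop:closegens} collapses to
\[
\text{Width}(MR^{[s]}_+)=\frac{n^2-[s]^2}{r}+\varepsilon_s ,
\]
where $\varepsilon_s\in\{0,1,2\}$ is a bounded correction fixed by $\text{parity}([s])$, $\text{parity}(\tau(K))$, and whether the minimal grading is $0$ or $-1$. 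When $\tau(K)<0$ the minimum is always $0$ and $\text{Rot}(P)=0$, so $\varepsilon_s\in\{0,1\}$ and this case is easier.

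\emph{The monotonicity.} Next I would note that as $[s]$ ranges over $\text{Spin}^{c}(S^3_r(K))$ the reachable heights $n([s])$ run exactly through the top window $\{g(K)-1,\dots,g(K)-r\}$, with $[s']$ the unique class reaching $g(K)-1$. Writing $n=n'+\rho$ when $\rho:=[s]\leq[s']$ and $n=n'-r+\rho$ when $\rho>[s']$, where $n'=w_{s'}r$, a short computation shows $n^2-\rho^2\leq(g(K)-1)^2-(s')^2-2w_{s'}r$ for every $[s]\neq[\pm s']$, hence $\frac{n^2-[s]^2}{r}\leq\frac{(g(K)-1)^2-(s')^2}{r}-2w_{s'}$. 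Since the $\varepsilon$ corrections differ by at most $2$ between any two classes, this already yields $\text{Width}(MR^{[s']})\geq\text{Width}(MR^{[s]})$.

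\emph{The hard part.} When $w_{s'}\geq 2$ the deficit $2w_{s'}\geq 4$ already exceeds the maximal possible spread of the $\varepsilon$ corrections, so the inequality is strict outright. The genuine difficulty is the case $w_{s'}=1$, where the deficit is exactly $2$ and equals the worst-case $\varepsilon$-spread, producing a threatened tie when $[s]$ is a residue neighbor of $[s']$. The resolution is that the $\varepsilon$'s are not independent: the parity configuration that makes $\varepsilon_{s'}$ small and the parity configuration at height $[s]$ that makes $\varepsilon_s=2$ cannot coexist, since $\text{parity}([s])\neq\text{parity}([s'])$ for a neighbor; chasing this through Proposition \ref{prop:closegens} and the bullet list forces $\varepsilon_{s'}\geq 1$ whenever $\varepsilon_s=2$, restoring strictness. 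Carrying out this bookkeeping in cases on $\text{parity}(\tau(K))$ and the sign of $\tau(K)$ completes the argument; and if some residual configuration still gave equal widths, the multisets could be separated by comparing the multiplicity with which the extremal relative grading is attained, since the conclusion only needs $MR^{[s]}\not\cong MR^{[s']}$. This parity-and-multiplicity bookkeeping, rather than any single estimate, is the main obstacle.
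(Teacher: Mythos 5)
Your overall skeleton is the same as the paper's: take $[s']$ to be the class whose line meets the figure-eight at height $g(K)-1$, and show $\text{Width}(MR^{[s']})$ is strictly maximal using Lemma \ref{lem:grweights}, Proposition \ref{prop:closegens}, and Proposition \ref{prop:windformula}. The genuine gap is in your ``closed form.'' Writing $n=[s]+w_sr$ and using $2\sum_{i=1}^{w_s}([s]+ir)=\frac{n^2-[s]^2}{r}+(n-[s])$, your own expression becomes
\[
M_{rel}(x^n)=\frac{n^2-[s]^2}{r}+\bigl(2\widetilde{H}_{[s]}-[s]-\tau(K)\bigr),
\]
so your correction term is $2\widetilde{H}_{[s]}-[s]-\tau(K)$. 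The parity formula for $\widetilde{H}$ that you import from the proof of Proposition \ref{prop:closegens} is only valid for heights below $|\tau(K)|$; once $[s]\geq\tau(K)\geq 0$ one has $\widetilde{V}_{[s]}=0$ and $\widetilde{H}_{[s]}=[s]$, so the correction equals $[s]-\tau(K)$ and grows linearly in $[s]$ (for instance $\tau(K)=0$ with $[s]$ near $r/2$ gives a correction of order $r/2$, not at most $2$); similarly, for $\tau(K)<0$ the $0$th column contributes $2[s]$ to the winding by Proposition \ref{prop:windformula}, again unbounded. Hence $\varepsilon_s\in\{0,1,2\}$ is false, and the monotonicity step --- a quadratic deficit of $2w_{s'}$ beating an $\varepsilon$-spread of at most $2$, with only $w_{s'}=1$ needing parity care --- does not establish strictness even when $w_{s'}\geq 2$.

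Those linear terms are precisely what the actual proof must control with the correct sign: for $s<s'$ one uses $\widetilde{H}_{s'}\geq\widetilde{H}_{s}$, and for $s>s'$ one performs the one-column shift and the thinness estimate $s'+s-1\leq 2(\widetilde{H}_{s'}-\widetilde{H}_{-s})\leq s'+s$, which together yield $M_{rel}(y^{g-1})-M_{rel}(y^n)\geq 1$ in every configuration (and the $\tau(K)<0$ case needs its own set of four sign configurations, not a shortcut via $\text{Rot}=0$). Your closing fallback --- ``if widths still tie, compare multiplicities'' --- is also left unexecuted; in fact the only possible near-tie can be pinned down exactly ($\widetilde{H}_{s}=\widetilde{H}_{s'}$, $w_{s'}=1$, $s=s'-1$, which forces $s=\tau(K)$) and is excluded by a parity contradiction, with no multiplicity count needed. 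So the route is the right one, but the uniform bound on the correction terms is an error, and the signed bookkeeping that constitutes the heart of the estimate is missing.
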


\begin{proof}
For some $[s] \neq [\pm s']$, the largest possible relative grading that $MR^{[s]}_{+}$ can achieve is associated to an intersection of some hypothetical simple figure-eight at largest height. Looking at the terms in the grading difference formula for a bigon from $a^s$ to such a generator, we see that the $2\text{Wind}(P)$ term satisifes $2\text{Wind}(P) \geq 2n$ while the other term is $-1-\tau(K)-n$. For this reason, we will suppose that $\widehat{\textit{HF}}(M)$ has a simple figure-eight at height $n$, taken to be the largest integer satisfying both $n<g(K)-1$ and $n \equiv s \,\, (\text{mod} \,\, r)$. Let $P'$ be the bigon between $a^{s'}$ and $y^{g-1}$, and $P$ the bigon between $a^s$ and $y^n$. Because the choice of $a^{s'}$ depends on $\tau(K)$, we will handle the $\tau(K) \geq 0$ subcase first before handling the $\tau(K) < 0$ subcase.\\

\noindent \textbf{Subsubcase A1a: $\mathbf{\tau(K) \geq 0}$}. Due to Lemma \ref{lem:grweights}, Proposition \ref{prop:closegens}, and Proposition \ref{prop:windformula}, $\text{Width}(MR^{[s]}_+)$ is nearly determined by $M_{rel}(y^n)$. We have $M_{rel}(y^n) \leq \text{Width}(MR^{[s]}_+) \leq M_{rel}(y^n)+1$, with either equality depending on whether $a^s$ is the smallest relatively graded intersection. To compare widths, we compute
\[
M_{rel}(y^{g-1}) = 2\left(H_{s'} + \sum_{i=1}^{w_{s'}} (s'+ir)\right) - 1 - (s'+w_{s'}r), 
\]
and likewise
\[
M_{rel}(y^n) = 2\left(H_{s} + \sum_{i=1}^{w_{s}} (s+ir)\right) - 1 - (s+w_{s}r).
\]

\noindent Their difference is then
\begin{align*}
M_{rel}(y^{g-1})-M_{rel}(y^n) &= 2\left(H_{s'} + \sum_{i=1}^{w_{s'}} (s'+ir) - \left(H_{s} + \sum_{i=1}^{w_{s}} (s+ir)\right)\right) \\
& \,\,\,\,\,\,\,\,\,\,\,\,\,\,\,\,\,\,\,\,\,\,\,\,\,\,\,\,\,\,\,\,\,\,\,\,\,\,\,\,\,\,\,\,\,\,\,\,\, - (s'+w_{s'}r-(s+w_{s}r)) \\
&= 2\left((H_{s'} - H_{s}) + \sum_{i=1}^{w_{s'}} (s'+ir) - \sum_{i=1}^{w_{s}} (s+ir)\right) \\ 
& \,\,\,\,\,\,\,\,\,\,\,\,\,\,\,\,\,\,\,\,\,\,\,\,\,\,\,\,\,\,\,\,\,\,\,\,\,\,\,\,\,\,\,\,\,\,\,\,\, - (s'-s) -r(w_{s'}-w_{s}).
\end{align*}
If $s < s'$ so that $w_{s}=w_{s'}$, then
\begin{align*}
M_{rel}(y^{g-1})-M_{rel}(y^n) &= 2((H_{s'} - H_{s}) + w_{s'} (s'-s)) - (s'-s)\\
&= 2(H_{s'} - H_{s}) + (2w_{s'} -1)(s'-s) \\
&\geq 1,
\end{align*}

\noindent since $w_{s'} > 0$ and $s' > s$ implies that $H_{s'} \geq H_{s}$.\\

If $s > s'$ so that $w_{s}=w_{s'} - 1$, then shifting $P$ one column to the right in $\widetilde{T}$ (see Figure \ref{fig:sandsprimecomparisons}) provides
\begin{align*}
M_{rel}(y^{g-1})-M_{rel}(y^n) &= 2\left((H_{s'} - H_{s}) + \sum_{i=1}^{w_{s'}} (s'+ir) - \sum_{i=1}^{w_{s}} (s+ir)\right) \\
& \,\,\,\,\,\,\,\,\,\,\,\,\,\,\,\,\,\,\,\,\,\,\,\,\,\,\,\,\,\,\,\,\,\,\,\,\,\,\,\,\,\,\,\,\,\,\,\,\, - (s'-s) -r(w_{s'}-w_{s}) \\
(\text{column shift}) \,\,\,\,\,\, &= 2\left((H_{s'} - H_{s}) + \sum_{i=1}^{w_{s'}} (s'+ir) - \sum_{i=2}^{w_{s'}} (s+(i-1)r)\right) \\
& \,\,\,\,\,\,\,\,\,\,\,\,\,\,\,\,\,\,\,\,\,\,\,\,\,\,\,\,\,\,\,\,\,\,\,\,\,\,\,\,\,\,\,\,\,\,\,\,\, - (s'+r-s)\\
&= 2\left((H_{s'} - H_{s}) + (s'+r) + \sum_{i=2}^{w_s'} (s'+ir) - \sum_{i=2}^{w_{s'}} (s+(i-1)r)\right) \\
& \,\,\,\,\,\,\,\,\,\,\,\,\,\,\,\,\,\,\,\,\,\,\,\,\,\,\,\,\,\,\,\,\,\,\,\,\,\,\,\,\,\,\,\,\,\,\,\,\, - (s'+r-s)\\
&= 2\left((H_{s'} + s - H_{s}) + (s'+r-s) + (w_{s'}-1)(s'+r-s) \right) \\
& \,\,\,\,\,\,\,\,\,\,\,\,\,\,\,\,\,\,\,\,\,\,\,\,\,\,\,\,\,\,\,\,\,\,\,\,\,\,\,\,\,\,\,\,\,\,\,\,\, - (s'+r-s) \\
&= 2(H_{s'} + (s - H_{s})) + (2w_{s'}-1)(s'+r-s)\\
&= 2(H_{s'} - V_s) + (2w_{s'}-1)(s'+r-s)\\
&= 2(H_{s'} - H_{-s}) + (2w_{s'}-1)(s'+r-s).
\end{align*}

\noindent Notice that $s'+s-1 \leq 2(H_{s'}-H_{-s}) \leq s'+s$ depending on the parities of $s$ and $s'$ together with $s > s'$. Then we have
\begin{align*}
M_{rel}(y^{g-1})-M_{rel}(y^n) &= 2(H_{s'} - H_{-s}) + (2w_{s'}-1)(s'+r-s) \\
&\geq s'+s-1 + (2w_{s'}-1)(s'+r-s) \\
&\geq s'+s-1+s'+r-s \\
&= 2s'-1+r \\
&> 1,
\end{align*}

\noindent since $w_{s'} > 0$ and $s' < \frac{r-1}{2}$ if there exists an $s > s'$.

\begin{figure}[!ht]
\labellist
\small\hair 2pt
\pinlabel $(a)$ at 95 -5
\pinlabel $(b)$ at 385 -5
\pinlabel $a^{s'}$ at 5 37
\pinlabel $a^{s}$ at 50 35
\pinlabel $y^{g-1}$ at 180 390
\pinlabel $y^{s+r}$ at 190 320
\pinlabel $a^{s'}$ at 300 37
\pinlabel $a^{s}$ at 370 35
\pinlabel $y^{g-1}$ at 470 390
\pinlabel $y^{s+r}$ at 480 320
\endlabellist
\centering
\includegraphics[scale=0.7]{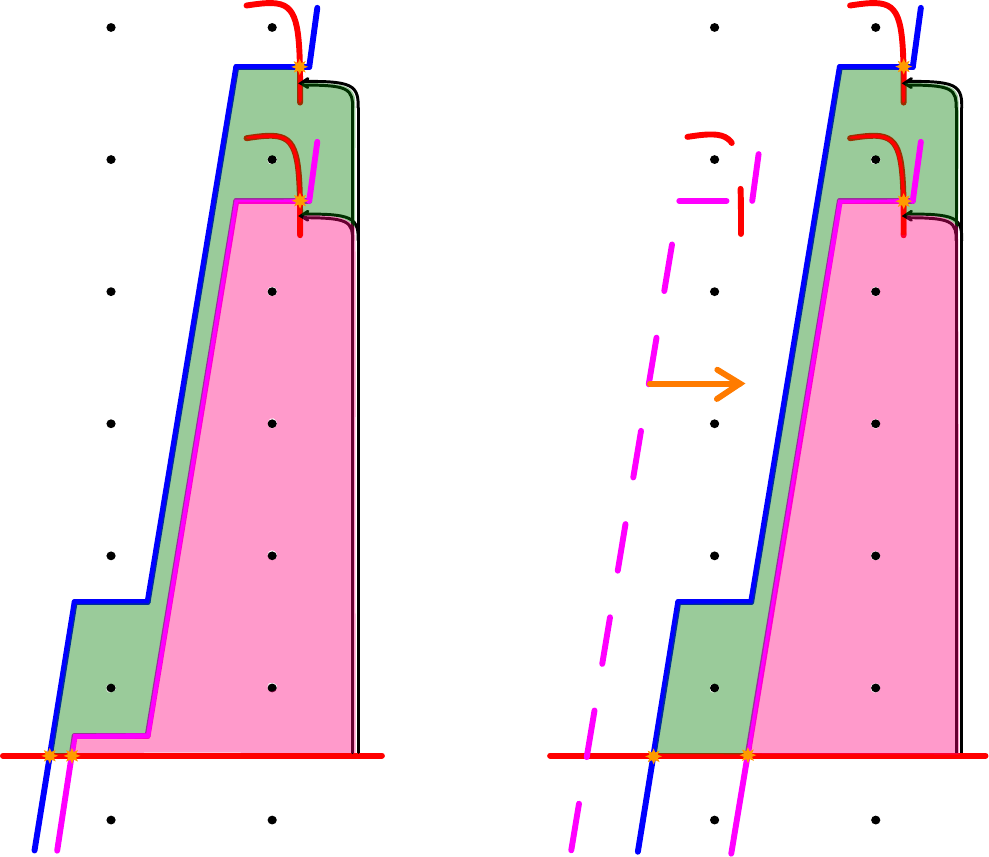}
\vspace{1\intextsep}
\caption{Example bigons $P'$ (split-shaded green and pink) and $P$ (shaded pink) when $w_{s'}=1$. $(a)$ has $s < s'$, while $(b)$ has $s > s'$ together with the single column shift to the right.}
\label{fig:sandsprimecomparisons}
\end{figure}

In both situations, we see that $M_{rel}(y^{g-1}) - M_{rel}(y^n) \geq 1$. If this difference is greater than one, then 
\[
\text{Width}(MR^{[s']}) \geq \text{Width}(MR^{[s']}_+) \geq M_{rel}(y^{g-1}) > M_{rel}(y^n)+1 \geq \text{Width}(MR^{[s]}_+).
\]
This already handles the possibility where we need to translate $MR^{[s]}_+$ by 1, so suppose $M_{rel}(y^{g-1})-M_{rel}(y^n)=1$. This is possible only if $H_{s} = H_{s'}$, $w_{s'}=1$, and $s=s'-1$, which altogether imply that $s = \tau(K)$. However, the widths only match if $\text{Width}(MR^{[s]}_+) = M_{rel}(y^n)+1$. This condition is equivalent to having $\text{parity}(s) \neq \text{parity}(\tau(K))$, which is a contradiction. Therefore $\text{Width}(MR^{[s']}) > \text{Width}(MR^{[s]}_{\pm})$, which completes the $\tau(K) \geq 0$ subcase.

\noindent \textbf{Subsubcase A1b: $\mathbf{\tau(K) < 0}$}. Recall that the reference intersection $a^s$ has no nearby left/right intersections belonging to a simple figure-eight. This means that $a^s$ has the smallest relative grading of $MR^{[s]}$, and so $\text{Width}(MR^{[s]}_+)=M_{rel}(y^n)+1$. From Proposition \ref{prop:windformula} we see
\begin{align*}
\text{Wind}(P) = \left\{
	\begin{array}{l c}
	\displaystyle s + \sum_{i=1}^{w_s} (s+ir)& s \geq 0, \\
	\displaystyle \sum_{i=1}^{w_s} (s+ir) & s < 0.
	\end{array}
	\right.
\end{align*}

\noindent If $0 \leq s < s'$, then proceeding as before we have
\begin{align*}
M_{rel}(y^{g-1})-M_{rel}(y^n) &= 2\left(s' + \sum_{i=1}^{w_{s'}} (s'+ir) - \left(s + \sum_{i=1}^{w_{s}} (s+ir)\right)\right) \\
& \,\,\,\,\,\,\,\,\,\,\,\,\,\,\,\,\,\,\,\,\,\,\,\,\,\,\,\,\,\,\,\,\,\,\,\,\,\,\,\,\,\,\,\,\,\,\,\,\,\,\,\, - (s'-s) -r(w_{s'}-w_{s}) \\
&= 2(s' - s + w_{s'} (s'-s)) - (s'-s)\\
&= (2w_{s'} + 1)(s'-s)\\
&\geq 3.
\end{align*}

\noindent If $s < s' \leq 0$, then 
\begin{align*}
M_{rel}(y^{g-1})-M_{rel}(y^n) &= (2w_{s'} -1)(s'-s)\\
&\geq 1.
\end{align*}

\noindent If $s > s'$, then as before we have $w_{s} = w_{s'}-1$. If $s > s' \geq 0$, then
\begin{align*}
M_{rel}(y^{g-1})-M_{rel}(y^n) &= 2\left((s'-s) + \sum_{i=1}^{w_{s'}} (s'+ir) - \sum_{i=1}^{w_{s}} (s+ir)\right) \\
& \,\,\,\,\,\,\,\,\,\,\,\,\,\,\,\,\,\,\,\,\,\,\,\,\,\,\,\,\,\,\,\,\,\,\,\,\,\,\,\,\,\,\,\,\,\,\,\,\, - (s'-s) -r(w_{s'}-w_{s}) \\
(\text{column shift}) \,\,\,\,\,\, &= 2\left((s'-s) + \sum_{i=1}^{w_{s'}} (s'+ir) - \sum_{i=2}^{w_{s'}} (s+(i-1)r)\right) - (s'+r-s)\\
&= 2(s' + (s'+r-s) + (w_{s'}-1)(s'+r-s)) - (s'+r-s) \\
&= 2s' + (2w_{s'}-1)(s'+r-s) \\
&\geq 1.
\end{align*}

\noindent In the event that $0 \geq s > s'$, we get
\begin{align*}
M_{rel}(y^{g-1})-M_{rel}(y^n) &= 2\left(\sum_{i=1}^{w_{s'}} (s'+ir) - \sum_{i=1}^{w_{s}} (s+ir)\right) - (s'-s) -r(w_{s'}-w_{s}) \\
(\text{column shift}) \,\,\,\,\,\, &= 2\left(\sum_{i=1}^{w_{s'}} (s'+ir) - \sum_{i=2}^{w_{s'}} (s+(i-1)r)\right) - (s'+r-s)\\
&= 2(s' + r + (w_{s'}-1)(s'+r-s)) - (s'+r-s) \\
&= 2(s + w_{s'}(s'+r-s)) - (s'+r-s) \\
&= 2s + (2w_{s'}-1)(s'+r-s) \\
&\geq 2s+s'+r-s \\
&\geq (s'+s)+r \\
&\geq 1.
\end{align*}

\noindent In every inequality we have $M_{rel}(y^{g-1}) > M_{rel}(y^n)$. Then 
\[
\text{Width}(MR^{[s']}) \geq \text{Width}(MR^{[s']}_+) = M_{rel}(y^{g-1}) +1 > M_{rel}(y^n) +1 = \text{Width}(MR^{[s]}_+),
\]

\noindent for each $[s] \in \text{Spin}^{c}(S^3_r(K))$. This completes the $\tau(K) < 0$ subsubcase, and the proof of Lemma \ref{lem:mainlem1}.
\end{proof}

\noindent \textbf{Case A2: $r \geq 2(g(K)-1)$}. Recall that in this case we have $w_{s'}=0$. Let us consider $r=2g(K)-1$ first. When $\tau(K) \geq 0$, the surgery slope is large enough so that every intersection lies in the $0$th column of $\widetilde{T}$. Width alone as an invariant won't be enough, so we will also need to appeal to the multiplicities of the elements of the relative grading multisets. They will be used to show that only $\text{spin}^{c}$ structures with the same parity are unobstructed. When we assume that $S^3_r(K)$ is reducible later on, the fact that $r$ is odd will provide a contradiction with periodicity. When $\tau(K) < 0$, we need far less sublety.

\begin{lemma}
Suppose $K$ is thin, $0 \leq \tau(K) < g(K)$, and $r = 2g(K)-1$. Then there exists an $[s'] \in \text{Spin}^{c}(S^3_r(K))$ for which $[s] \neq [\pm s']$ satisfies $MR^{[s]} \cong MR^{[s']}$ up to translation only if $\text{parity}([s])=\text{parity}([s'])$. 
\label{lem:mainlem2p}
\end{lemma}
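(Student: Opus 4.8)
\textbf{Proof plan for Lemma \ref{lem:mainlem2p}.}

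The plan is to take $[s'] = g(K)-1$, which is legitimate here since $r = 2g(K)-1 \geq 2(g(K)-1)$ forces $w_{s'}=0$, so the relevant simple figure-eight at height $g(K)-1$ is hit in the $0$th column of $\widetilde{T}$. Because the surgery slope is large, Proposition \ref{prop:windformula} tells us that \emph{every} intersection associated to any $[s]$ lies in the $0$th column: for each $[s]$ we have $w_s = 0$, so $\text{Wind}(P)$ reduces to just $\widetilde{H}_s$ (or $s$, or $0$ according to the sign cases there). Consequently $MR^{[s]} = MR^{[s]}_+$, and by Lemma \ref{lem:grweights} together with Proposition \ref{prop:closegens} the whole multiset $MR^{[s]}$ is determined by: (i) which heights $n$ with $|\tau(K)| \le n \le g(K)-1$ and $n \equiv s \pmod r$ carry a simple figure-eight — here since $r = 2g(K)-1$ there is at most one such $n$ in $[|\tau(K)|,g(K)-1]$ for each residue — (ii) the heights $n < |\tau(K)|$ with $n \equiv s$, which contribute the little $\{0,0,\pm 1\}$-type clusters from Proposition \ref{prop:closegens}, and (iii) the reference intersection $a^s$ itself. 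So the computation of $MR^{[s]}$ is essentially local.

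First I would write down $M_{rel}(y^n)$ for the single ``outer'' figure-eight at height $n$ (the largest height $\equiv s \pmod r$ that actually supports a component), using $M_{rel}(y^n) = 2\widetilde H_n - 1 - \tau(K) - n$ from Lemma \ref{lem:grweights} and Proposition \ref{prop:windformula} with $w_s = 0$. Then the parity formula for $\widetilde H_n$ from Proposition \ref{prop:closegens} shows that $\widetilde H_n = \tfrac{n+\tau(K)}{2}$ when $\text{parity}(n)=\text{parity}(\tau(K))$ and $\widetilde H_n = \tfrac{n+\tau(K)+1}{2}$ otherwise; substituting gives $M_{rel}(y^n) = -1$ in the equal-parity case and $M_{rel}(y^n) = 0$ in the opposite-parity case. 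Combined with $M_{rel}(x^n) = M_{rel}(y^n)+1$ and the fact (from the bulleted list after Proposition \ref{prop:closegens}) that the smallest relative grading is $-1$ exactly when $\text{parity}(s)=\text{parity}(\tau(K))$ and a right intersection exists (and $0$ otherwise), this shows that the \emph{placement} of $MR^{[s]}$ relative to its minimum, i.e. $\text{Width}(MR^{[s]})$ and the multiplicity profile near the top, depends on $\text{parity}(s)$. The key claim is then: if $\text{parity}(s) \neq \text{parity}(s')$ then either $\text{Width}(MR^{[s]}) < \text{Width}(MR^{[s']})$, or the widths agree but the multiset $MR^{[s']}$ contains an element with larger multiplicity (coming from the extra $\{0,0\}$ or $\{-1,-1\}$ pile-up forced by the parity mismatch between $s'$ and the intermediate figure-eight heights). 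I would verify this by comparing, residue class by residue class, the top of $MR^{[s]}$: the maximal element is $M_{rel}$ of the outer figure-eight at the largest admissible height $n_s$, and $n_{s'} = g(K)-1 \geq n_s$, so $2\widetilde H_{n_{s'}} - n_{s'}$ versus $2\widetilde H_{n_s} - n_s$ must be compared exactly as in the proof of Lemma \ref{lem:mainlem1} but with all $w$-terms zero, giving $M_{rel}(y^{g-1}) - M_{rel}(y^{n_s}) = 2(\widetilde H_{g-1} - \widetilde H_{n_s}) - ((g-1)-n_s) \geq 0$ with equality iff $\widetilde H$ jumps at every step from $n_s$ to $g-1$, which pins down the parity of $s$.

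The main obstacle I anticipate is the boundary bookkeeping when $\text{Width}(MR^{[s]}) = \text{Width}(MR^{[s']})$ as sets up to translation but the lemma still needs to exclude $[s]$ via parity — this is the ``multiplicity'' case flagged in the paragraph before the lemma. Handling it requires carefully tracking not just the extreme values but the full multiplicity vector of $MR^{[s]}$, in particular the count of figure-eights at the intermediate heights $n \equiv s \pmod r$ with $|\tau(K)| \le n < g(K)-1$ (of which, since $r = 2g(K)-1 > g(K)-1-|\tau(K)|$, there are actually none — this is the simplification that makes the lemma tractable) and the $n < |\tau(K)|$ contributions, whose $\{0,0,-1\}$ versus $\{0,0,1\}$ shape alternates with $\text{parity}(n) - \text{parity}(\tau(K))$. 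I would organize the argument so that the self-conjugate structures $[0]$ (and $[r/2]$ when $r$ even — here $r$ is odd so only $[0]$) are treated as automatically ``consistent'' as noted after the definition of $MR^{[s]}_\pm$, and then a clean case split on $\text{parity}(\tau(K))$ (with the $\tau(K)=0$ sub-case, where $\widetilde H_n = \lceil n/2 \rceil$, done first as a warm-up) finishes it. The $\tau(K) < 0$ part of the lemma statement is deliberately excluded here (it will be the ``far less subtlety'' case handled separately), so I need not address it.
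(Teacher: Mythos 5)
Your plan is essentially the paper's proof: take $[s'] = g(K)-1$, observe that for $r = 2g(K)-1$ every intersection lies in the $0$th column so each $MR^{[s]}$ is determined locally, rule out most $[s]$ by comparing widths via $M_{rel}(y^{g-1}) - M_{rel}(y^{s})$, and settle the remaining equal-width cases (which force $s \leq \tau(K)$ and $\tau(K) \in \{g(K)-2,\, g(K)-1\}$) by the parity-driven mismatch in the multiplicities of $0$ and $1$ after translating so the minimum is $0$. One correction needed when you execute it: the parity formula $\widetilde{H}_n \in \{\tfrac{n+\tau(K)}{2}, \tfrac{n+\tau(K)+1}{2}\}$, hence $M_{rel}(y^n) \in \{-1,0\}$, is only valid for $n \leq \tau(K)$; for $n > \tau(K)$ one has $\widetilde{H}_n = n$ and $M_{rel}(y^n) = n-1-\tau(K)$, which is exactly what makes $\text{Width}(MR^{[s']})$ large (and also means your displayed difference $2(\widetilde{H}_{g-1}-\widetilde{H}_{n_s}) - ((g-1)-n_s)$ can be negative when $\tau(K) = g(K)-1$ and the parities mismatch, though that is precisely one of the equal-width cases your multiplicity step resolves).
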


\begin{proof}
The $\text{spin}^{c}$ structure $[s']$ we want to consider has $[s']=g(K)-1$. Suppose for the sake of contradiction that some $[s] \neq [\pm s']$ satisfies $MR^{[s]} \cong MR^{[s']}$ up to translation and $\text{parity}(s) \neq \text{parity}(s')$. We know that $r > 1$ forces $g(K) > 1$, and also that each $l^s_r$ intersects $\widehat{\textit{HF}}(M)$ exactly once due to this large surgery slope. Because the choice of reference generator $a^{s'}$ depends on $\tau(K)$, let us split into two cases: $\tau(K) \geq 0$ and $\tau(K) < 0$.

Assume $\tau(K) \geq 0$. Because all intersections lie within the 0th column of $\widetilde{T}$, we will instead use the hyperelliptic involution invariance of $\widehat{\textit{HF}}(M)$ to only consider $s \geq 0$. If $\widehat{\textit{HF}}(M)$ has no simple figure-eight at height $s$, then $\text{Width}(MR^{[s]})=0$ immediately does not match $\text{Width}(MR^{[s']}) \geq 1$, so we may as well assume that there is a simple figure-eight at height $s$. We have $M_{rel}(y^{s'})=2H_{s'}-1-\tau(K)-s' = s'-1-\tau(K)$ by Lemma \ref{lem:grweights} and Proposition \ref{prop:windformula}, since $H_{s'}=s'$ when $\tau(K) \leq g(K)-1=s'$. Further,
\begin{align*}
M_{rel}(y^{s'}) - M_{rel}(y^{s}) &= 2H_{s'}-1-\tau(K)-s' - (2H_s-1-\tau(K)-s)\\
&= 2(H_{s'}-H_s) - (s'-s).
\end{align*}

If $s > \tau(K)$, then $H_{s} = s$ implies that $M_{rel}(y^{s'}) - M_{rel}(y^{s}) = s'-s \geq 1$. But then
\[
\text{Width}(MR^{[s']}) = M_{rel}(y^{s'})+1 > M_{rel}(y^s)+1 \geq \text{Width}(MR^{[s]}),
\]
so we must have $s \leq \tau(K)$ together with $\text{Width}(MR^{[s]}) = 1$. Notice that $\text{Width}(MR^{[s']}) = M_{rel}(y^{s'})+1 = s'-\tau(K) > 1$ if $\tau(K) < s'-1$, and so we are also forced to have either $\tau(K)=s'-1$ or $\tau(K)=s'$. In both cases we have $\text{Width}(MR^{[s']}) = 1$. Since using width as an invariant has been exhausted, let us count multiplicities of elements of the $MR^{[s]}$'s next.

Recall that $e_n$ denotes the number of simple figure-eights at height $n$ of $\widehat{\textit{HF}}(M)$. Further, we need $e_{s} = e_{s'}$ in order to have $|MR^{[s]}| = |MR^{[s']}|$. We have assumed that $\text{parity}([s]) \neq \text{parity}([s'])$, so one of these two multisets contains $-1$ and must be translated by 1 to make 0 the smallest element. This translated multiset will then contain $0$ with multiplicity $e_{s'}$, while the other multiset will contain $0$ with multiplicity $e_{s'}+1$. This is the desired contradiction.
\end{proof}

When $r=2(g(K)-1)$, we will end up having $\text{Width}(MR^{[s]}) = 1$ for every $[s]$ if $\tau(K)$ is large enough. This means relative grading information alone will not be enough, and so we will return to such cases in Section \ref{sec:absolute}.

\begin{lemma}
Suppose $K$ is thin, $0 \leq \tau(K) < g(K)-2$, and $r = 2(g(K)-1)$. Then there exists an $[s'] \in \text{Spin}^{c}(S^3_r(K))$ for which every $[s] \neq [\pm s']$ satisfies $MR^{[s]} \not \cong MR^{[s']}$ up to translation.
\label{lem:mainlem2t}
\end{lemma}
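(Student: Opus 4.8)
The plan is to take $[s'] = g(K)-1$, which since $r = 2(g(K)-1)$ is the self-conjugate $\text{spin}^{c}$ structure $[r/2]$, and to show that $\text{Width}(MR^{[s']})$ is strictly larger than $\text{Width}(MR^{[s]})$ for every $[s] \neq [\pm s']$; since a translation of a multiset preserves its width, this forces $MR^{[s]} \not\cong MR^{[s']}$ up to translation. Using the conjugation symmetry $\widehat{\textit{HF}}(S^3_r(K),[s]) \cong \widehat{\textit{HF}}(S^3_r(K),[-s])$ (Theorem \ref{thm:involution}, cf. \cite{OS04c}) it suffices to treat $0 \leq s \leq g(K)-2$. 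For each such $s$ the slope $r = 2(g(K)-1)$ is large enough that the heights $s+r$ and $s-r$ at which $l^s_r$ crosses in the two columns of $\widetilde{T}$ adjacent to the $0$th column both have absolute value exceeding $g(K)-1$; since every component of $\widehat{\textit{HF}}(M)$ is supported between heights $-(g(K)-1)$ and $g(K)-1$, all intersections of $\widehat{\textit{HF}}(M)$ with $l^s_r$ lie in the $0$th column.

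Next I would bound $\text{Width}(MR^{[s']})$ from below. Because $|\tau(K)| < g(K)$ there is a simple figure-eight component at height $g(K)-1$, and $l^{s'}_r$ meets it in the $0$th column. Since $g(K)-1 > \tau(K)$ we have $\widetilde{V}_{g(K)-1} = 0$ and hence $\widetilde{H}_{g(K)-1} = g(K)-1$; combined with $w_{s'} = 0$, Lemma \ref{lem:grweights}, and Proposition \ref{prop:windformula} this yields $M_{rel}(y^{g(K)-1}) = g(K)-2-\tau(K)$, and therefore $M_{rel}(x^{g(K)-1}) = g(K)-1-\tau(K)$. The hypothesis $\tau(K) < g(K)-2$ makes this positive, so $MR^{[s']}$ contains both $M_{rel}(a^{s'}) = 0$ and $M_{rel}(x^{g(K)-1}) = g(K)-1-\tau(K)$, whence $\text{Width}(MR^{[s']}) \geq g(K)-1-\tau(K)$.

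Then I would bound $\text{Width}(MR^{[s]})$ from above for $0 \leq s \leq g(K)-2$. All intersections of $l^s_r$ lie in the $0$th column, so $MR^{[s]}$ records only the reference intersection $a^s$ together with, when it exists, the figure-eight component at height $s$. If $s \leq \tau(K)$, Proposition \ref{prop:closegens} and the ensuing identification of the smallest relative grading give $\text{Width}(MR^{[s]}) \leq 1$. If $\tau(K) < s \leq g(K)-2$, then $\widetilde{H}_s = s$, and Lemma \ref{lem:grweights} with Proposition \ref{prop:windformula} give $M_{rel}(y^s) = s-1-\tau(K) \geq 0$ and $M_{rel}(x^s) = s-\tau(K)$, with $a^s$ at grading $0$ the smallest, so $\text{Width}(MR^{[s]}) = s-\tau(K) \leq g(K)-2-\tau(K)$. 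Since $\tau(K) < g(K)-2$ forces $1 \leq g(K)-2-\tau(K)$, in every case $\text{Width}(MR^{[s]}) \leq g(K)-2-\tau(K) < g(K)-1-\tau(K) \leq \text{Width}(MR^{[s']})$, which completes the proof.

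The step requiring the most care is the second paragraph: confirming that in the $0$th column $l^{s'}_r$ meets precisely the figure-eight at height $g(K)-1$ (plus the single $\overline{\gamma}$-intersection $a^{s'}$), so that the values computed from Lemma \ref{lem:grweights} and Proposition \ref{prop:windformula} really do witness $\text{Width}(MR^{[s']})$. It is also worth emphasizing where the strict inequality $\tau(K) < g(K)-2$ is used: it is exactly what keeps $y^{g(K)-1}$ strictly above $a^{s'}$ while also forcing $g(K)-2-\tau(K) \geq 1$, so that width alone separates $[s']$ from every other $\text{spin}^{c}$ structure; the borderline values $\tau(K) \in \{g(K)-2, g(K)-1\}$ collapse this gap and are deferred to Section \ref{sec:absolute}.
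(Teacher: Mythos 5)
Your proof is correct and takes essentially the same route as the paper: both fix $s' = g(K)-1$, use Lemma \ref{lem:grweights} and Proposition \ref{prop:windformula} (with $\widetilde{H}_{s'}=s'$) to get $M_{rel}(y^{g(K)-1}) = g(K)-2-\tau(K) > 0$, hence $\text{Width}(MR^{[s']}) = g(K)-1-\tau(K) > 1$, and then bound $\text{Width}(MR^{[s]})$ by $1$ for $|s| \leq \tau(K)$ (Proposition \ref{prop:closegens}) and by $|s|-\tau(K) < g(K)-1-\tau(K)$ for $\tau(K) < |s| < s'$. Your explicit verification that all intersections for $[s] \neq [\pm s']$ lie in the $0$th column, and your use of only a lower bound on $\text{Width}(MR^{[s']})$, are just slightly more careful versions of steps the paper leaves implicit.
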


\begin{proof}
We again use $s'=g(K)-1$, and notice that when $\tau(K) < g(K)-2$, we have 
\[
M_{rel}(y^{s'}) = 2(g(K)-1)-1-\tau(K)-(g(K)-1) = g(K)-2-\tau(K) > 0.
\]

\noindent This shows that $\text{Width}(MR^{[s']}) = M_{rel}(y^{s'})+1 > 1$. Any $[s] \neq [\pm s']$ with $|s| \leq \tau(K)$ has $\text{Width}(MR^{[s]}) = 1$ due to Proposition \ref{prop:closegens}, so suppose $\tau(K) < |s| < s'$. In this case, $\text{Width}(MR^{[s]}) \leq M_{rel}(y^s)+1$, but we also have $M_{rel}(y^{s'})-M_{rel}(y^{s}) = s'-|s| > 0$. Then $\text{Width}(MR^{[s]}) < \text{Width}(MR^{[s']})$, which completes the proof.
\end{proof}

When $\tau(K) < 0$, the fact that the reference intersection $a^s$ lies outside of the neighborhood of $\widetilde{\mu}_0$ is very convenient. This is an example of a \textit{non-vertical intersection}, which is an intersection between $l^s_r$ and $\overline{\gamma}$ that lies outside of a neighborhood of a lift $\widetilde{\mu}$.

\begin{lemma}
Suppose $K$ is thin, $-g(K) < \tau(K) < 0$, and $r \geq 2(g(K)-1)$. Then there exists an $[s'] \in \text{Spin}^{c}(S^3_r(K))$ for which every $[s] \neq [\pm s']$ satisfies $MR^{[s]} \not \cong MR^{[s']}$ up to translation. 
\label{lem:mainlem2n}
\end{lemma}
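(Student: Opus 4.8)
The plan is to take $[s'] = g(K)-1$, exactly as in Lemmas~\ref{lem:mainlem2p} and~\ref{lem:mainlem2t}, and to show that $\text{Width}(MR^{[s']}) > \text{Width}(MR^{[s]})$ for every $[s] \neq [\pm s']$; since the width of a multiset is a translation invariant, this immediately gives $MR^{[s]} \not\cong MR^{[s']}$ up to translation. The hypothesis $\tau(K) < 0$ puts us in the ``convenient'' situation: the reference intersection $a^s$ is a non-vertical intersection of $l^s_r$ with $\overline{\gamma}$ lying outside the neighborhood of $\widetilde{\mu}_0$, and by the bulleted discussion following Proposition~\ref{prop:closegens} it always realizes the minimum of $MR^{[s]}$. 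Hence for every $[s]$, $\text{Width}(MR^{[s]})$ is simply the largest relative grading occurring in $MR^{[s]}$, with none of the parity-driven $\pm 1$ ambiguity that complicated the $\tau(K)\geq 0$ arguments.

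First I would evaluate $\text{Width}(MR^{[s']})$. Because $r \geq 2(g(K)-1)$ we have $w_{s'}=0$, so the bigon $P'$ from $a^{s'}$ to the right intersection $y^{s'}$ of the height-$(g(K)-1)$ figure-eight stays in the $0$th column, and Proposition~\ref{prop:windformula} (the branch $\tau(K)<0$, $[s']\geq 0$, $w_{s'}=0$) gives $\text{Wind}(P') = s' = g(K)-1$. Combining with $2\text{Wght}(P')-2\text{Rot}(P') = -1-\tau(K)-(g(K)-1)$ from Lemma~\ref{lem:grweights} yields $M_{rel}(y^{s'}) = g(K)-2-\tau(K)$, and since $M(x^{s'})-M(y^{s'})=1$ we conclude $\text{Width}(MR^{[s']}) \geq M_{rel}(y^{s'})+1 = g(K)-1-\tau(K) \geq g(K) \geq 2$ (note $\tau(K)\le -1$ forces $g(K)\ge 2$). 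Any further intersections, such as the extra column-$1$ figure-eight that appears when $r = 2(g(K)-1)$ and $[s']$ is self-conjugate, can only increase this and never push the minimum below $M_{rel}(a^{s'})=0$.

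Next I would bound $\text{Width}(MR^{[s]})$ for $[s]\neq[\pm s']$; by the hyperelliptic symmetry of Figure~\ref{fig:spincconjinv} it suffices to treat $s\geq 0$. The key observation is that, because $r \geq 2(g(K)-1)$, the interval $[-(g(K)-1),g(K)-1]$ contains at most one height in the class $[s]$, and since $[s]\neq[\pm s']$ that height, if it exists, satisfies $0\leq n\leq g(K)-2$ and lies in the $0$th column; in particular $l^s_r$ meets no figure-eight outside column $0$. If $|s|<|\tau(K)|$, Proposition~\ref{prop:closegens} gives $\text{Width}(MR^{[s]})\leq 1 < g(K)-1-\tau(K)$. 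If $|\tau(K)|\leq s\leq g(K)-2$, the unique column-$0$ figure-eight sits at height $n=s$ with $w_s=0$, so $\text{Wind}(P)=s$ and Lemma~\ref{lem:grweights} gives $M_{rel}(y^s)=s-1-\tau(K)$, whence $\text{Width}(MR^{[s]})\leq M_{rel}(y^s)+1 = s-\tau(K) \leq g(K)-2-\tau(K) < \text{Width}(MR^{[s']})$. Finally, if $s>g(K)-1$ (possible only when $r$ strictly exceeds $2(g(K)-1)$), then $l^s_r$ meets no figure-eight at all, so $MR^{[s]}$ records only intersections with $\overline{\gamma}$ and its width is even smaller. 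In every case $\text{Width}(MR^{[s]}) < \text{Width}(MR^{[s']})$, which finishes the proof.

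The step I expect to be the main obstacle is the bookkeeping underlying the last paragraph: confirming that for $[s]\neq[\pm s']$ the multiset $MR^{[s]}$ has no element exceeding $s-\tau(K)$ (respectively $1$), i.e., that apart from the single column-$0$ figure-eight at height $s$ the line $l^s_r$ contributes only the reference intersection $a^s$ and no further $\overline{\gamma}$-intersections of higher grading. This requires feeding in the precise shape of $\overline{\gamma}$ for a thin knot with $\tau(K)<0$ — that it winds only inside a thin vertical strip around $\overline{\mu}$ at heights between $\tau(K)$ and $0$ and otherwise has constant negative slope $2\tau(K)+1$ — in order to enumerate all essential intersections of $l^s_r$ with $\widehat{\textit{HF}}(M)$ after regular homotopy. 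Once that geometric input is secured, the grading estimates are routine applications of Lemma~\ref{lem:grweights} together with Propositions~\ref{prop:closegens} and~\ref{prop:windformula}.
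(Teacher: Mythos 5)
Your proposal is essentially the paper's argument: the paper also takes $[s']=g(K)-1$, uses that for $\tau(K)<0$ the reference intersection realizes the minimum so that $\text{Width}(MR^{[s]})=M_{rel}(y^s)+1$, and then compares via Lemma \ref{lem:grweights} and Proposition \ref{prop:windformula}, getting $M_{rel}(y^{s'})-M_{rel}(y^s)=s'-s>0$ for $0\le s<s'$ (after reducing to $s\ge 0$ by the involution), which is exactly your width computation. One sub-step of yours is off, though it does not sink the argument: for $0\le |s|<|\tau(K)|$ you claim $\text{Width}(MR^{[s]})\le 1$ by Proposition \ref{prop:closegens}, but that proposition bounds figure-eight intersections against the nearby \emph{vertical} generator, which when $\tau(K)<0$ is not the chosen reference intersection $a^s$ (a non-vertical intersection to the left of $\overline{\mu}$). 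By your own formulas, if a figure-eight exists at height $s<|\tau(K)|$ then $M_{rel}(y^s)=s-1-\tau(K)$, so the width there is $s-\tau(K)$, which exceeds $1$ as soon as $|\tau(K)|\ge 2$. The repair is immediate and is what the paper does: treat all $0\le s<s'$ uniformly with $\text{Width}(MR^{[s]})\le M_{rel}(y^s)+1=s-\tau(K)<g(K)-1-\tau(K)=\text{Width}(MR^{[s']})$, so no separate $|s|<|\tau(K)|$ case is needed.
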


\begin{proof}
Since $w_{s'}=0$, we again have $[s']=g(K)-1$. Notice that each $l^s_r$ gives rise to only two non-vertical intersections around the $0$th column and intersections at height $s$ when $[s] \neq [\pm s']$. We have $s'$ maximal when $w_{s'}=0$, so use hyperelliptic involution invariance to assume $0 \leq s < s'$. Recall that $\text{Width}(MR^{[s]})=M_{rel}(y^s)+1$ under the assumptions that $\tau(K) < 0$. The formula for $\text{Wind}(P)$ does not depend on $\tau(K)$, which means

\begin{align*}
M_{rel}(y^{s'}) - M_{rel}(y^{s}) &= 2s'-1-\tau(K)-s' - (2s-1-\tau(K)-s)\\
&= s'-s.
\end{align*}

Then $\text{Width}(MR^{[s']}) = M_{rel}(y^{s'})+1 > M_{rel}(y^{s}) + 1 = \text{Width}(MR^{[s]})$, which implies $MR^{[s]} \not \cong MR^{[s']}$. 
\end{proof}

In the following section we address the remaining cases involving $|\tau(K)|=g(K)$, as well as the few unresolved cases of this section. In particular, the cases with $g(K)-2 \leq \tau(K) < g(K)$ and $r=2(g(K)-1)$ are handled in Lemma \ref{lem:largelem}.

\section{Remaining Cases and Absolute Gradings}
\label{sec:absolute}

With the case analysis for $|\tau(K)|<g(K)$ out of the way, we turn to the more difficult part. As in Section \ref{sec:relative}, Figure \ref{fig:Sec5Cases} breaks down the upcoming case analysis.\\

\begin{figure}[!h]
\centering
\includegraphics[scale=0.8]{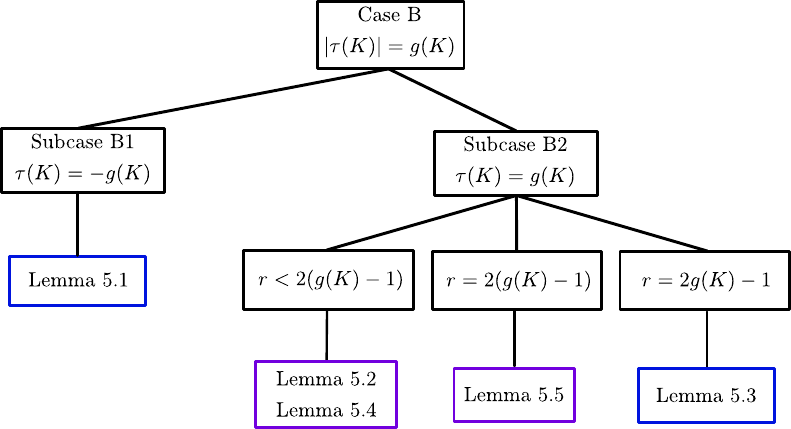}
\caption{The case flowchart for the arguments in this section. As before, blue boxes indicate that the contained lemmas only appeal to relative grading information, while purple boxes use absolute gradings.}
\label{fig:Sec5Cases}
\end{figure}

\noindent \textbf{Case B: $\mathbf{|\tau(K)|=g(K)}$}. When $|\tau(K)|$ is at its largest, the essential curve $\overline{\gamma}$ suffices to indicate $g(K)$ and we are not guaranteed a simple figure-eight at height $g(K)-1$. For these cases we still choose $[s']$ so that $g-1 \equiv s' \,\, (\text{mod} \,\, r)$ and continue to use $w_{s}$, except now modifying it to just be the largest multiple of $r$ so that $s+w_{s}r < g(K)$. The $\tau(K)=-g(K)$ case is easier, so we start there.\\

\noindent \textbf{SubCase B1: $\mathbf{\tau(K)=-g(K)}$}.

\begin{lemma}
Suppose $K$ is thin with $\tau(K)=-g(K)$, and let $1 < r \leq 2g(K)-1$. Then there exists an $[s'] \in \text{Spin}^{c}(S^3_r(K))$ for which every $[s] \neq [\pm s']$ satisfies $MR^{[s]} \not \cong MR^{[s']}$ up to translation.
\label{lem:mainlem3}
\end{lemma}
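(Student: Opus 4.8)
Since $\tau(K) = -g(K)$, the essential curve $\overline{\gamma}$ wraps around the cylinder at height $-g(K)$, so the tallest feature of $\widehat{\textit{HF}}(M)$ is $\overline{\gamma}$ itself reaching down to height $-g(K)$ and there may or may not be a simple figure-eight at height $g(K)-1$. The plan is the same as in Case A: pick $[s']$ with $g(K)-1 \equiv s' \pmod r$ (so $[s'] = g(K)-1$ when $r \geq 2(g(K)-1)$, and otherwise $w_{s'} > 0$), and show $\text{Width}(MR^{[s']})$ strictly dominates $\text{Width}(MR^{[s]})$ for every $[s] \neq [\pm s']$. Because $\tau(K) < 0$, the reference intersection $a^s$ is a non-vertical intersection on $\overline{\gamma}$, and by the bullet list following Proposition \ref{prop:closegens} it always carries the smallest relative grading of $MR^{[s]}$; consequently $\text{Width}(MR^{[s]}_\pm) = M_{rel}(y^n)+1$ whenever a simple figure-eight at the relevant height exists, and $\text{Width}(MR^{[s]}) = 0$ otherwise. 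This removes the parity subtlety that complicated Lemma \ref{lem:mainlem2p}, so width alone should suffice throughout.

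**Key steps.** First I would invoke Lemma \ref{lem:grweights} to get $2\text{Wght}(P) - 2\text{Rot}(P) = -1 - \tau(K) - |n| = -1 + g(K) - |n|$ for the bigon $P$ from $a^n$ to a right intersection $y^n$. Next, since $\tau(K) < 0$, Proposition \ref{prop:windformula} gives $\text{Wind}(P) = \sum_{i=0}^{w_s}(s+ir)$ when $s \geq 0$ and $\sum_{i=1}^{w_s}(s+ir)$ when $s < 0$; note the $\text{Wind}$ formula is insensitive to the value of $\tau(K)$, exactly as exploited in Lemma \ref{lem:mainlem2n}. Combining, $M_{rel}(y^n) = 2\text{Wind}(P) + g(K) - 1 - (s + w_s r)$. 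I would then split into the two subcases $w_{s'} = 0$ (i.e.\ $r \geq 2(g(K)-1)$) and $w_{s'} > 0$. The subcase $w_{s'} = 0$ is essentially Lemma \ref{lem:mainlem2n} verbatim — that lemma already handles $-g(K) < \tau(K) < 0$, so here I only need the boundary value $\tau(K) = -g(K)$, and the computation $M_{rel}(y^{s'}) - M_{rel}(y^s) = s' - s > 0$ carries over unchanged since the $\tau(K)$-terms cancel. For $w_{s'} > 0$, I would reuse the column-shift bookkeeping from Subcase A1b: compare $M_{rel}(y^{g-1})$ (bigon $P'$ from $a^{s'}$ to $y^{g-1}$) against $M_{rel}(y^n)$ (bigon $P$ from $a^s$ to $y^n$, $n$ the largest height $< g(K)$ with $n \equiv s \pmod r$, possibly now $n = g(K)-1$ is unavailable so $n \le g(K)-1$), handling $s < s'$ (same $w$) and $s > s'$ (with $w_s = w_{s'}-1$, column-shift) separately, and in each case read off a strictly positive lower bound using $w_{s'} \ge 1$ and the sign of $s'-s$. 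The arithmetic is the same family of sums $\sum(s+ir)$ already manipulated in Subcase A1b, just with $-1-\tau(K) = g(K)-1$ substituted for the weight term; I expect each branch to yield $M_{rel}(y^{g-1}) - M_{rel}(y^n) \ge 1$.

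**Finishing and the main obstacle.** Once $M_{rel}(y^{g-1}) > M_{rel}(y^n)$ in every branch, I conclude
\[
\text{Width}(MR^{[s']}) \ge \text{Width}(MR^{[s']}_+) = M_{rel}(y^{g-1}) + 1 > M_{rel}(y^n) + 1 \ge \text{Width}(MR^{[s]}_\pm),
\]
the last step using that $a^s$ is smallest so no extra $+1$ translation ambiguity arises — this is the clean advantage of $\tau(K) < 0$. Hence $MR^{[s]} \not\cong MR^{[s']}$ up to translation for all $[s] \neq [\pm s']$. The main obstacle I anticipate is bookkeeping at the boundary of the $\overline{\gamma}$-region: when $\tau(K) = -g(K)$, the essential curve descends all the way to $-g(K)$, so I must double-check that the region defining $\widetilde{H}_s$ (and hence $\text{Wind}(P')$ for the top bigon $P'$, which now ends on a simple figure-eight at height $g(K)-1$ that sits just below $\overline{\gamma}$'s winding region) is still correctly enclosed and that no intersection of $l^{s'}_r$ with a vertical segment of $\overline{\gamma}$ is miscounted — but since $|s'| = g(K)-1 < g(K) = |\tau(K)|$ is not automatic (it fails only if $r < 2(g(K)-1)$, in which case $w_{s'}>0$ and we are in the other subcase), the reference intersection structure is consistent with the earlier lemmas, and I expect the verification to be routine rather than genuinely hard.
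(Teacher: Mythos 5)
There is a genuine gap. Your entire comparison scheme runs through right intersections $y^n$ of simple figure-eight components, but when $\tau(K)=-g(K)$ genus detection is carried by $\overline{\gamma}$ itself and no figure-eight at height $g(K)-1$ (indeed no figure-eight at all) is guaranteed; the mirrors of $T(2,n)$ are thin with $\tau=-g$ and have none. Your fallback accounting, that $\text{Width}(MR^{[s]}_{\pm})=M_{rel}(y^n)+1$ when a figure-eight exists at the relevant height and $\text{Width}(MR^{[s]})=0$ otherwise, is false in this case: since $|\tau(K)|=g(K)$, the curve $\overline{\gamma}$ winds past every height below $g(K)$ near each lift of $\overline{\mu}$, so $l^s_r$ meets $\overline{\gamma}$ in a pair of vertical intersections around every column it crosses below height $g(K)$, and $MR^{[s]}$ has positive width with no figure-eights at all (already the two intersections $x_s^l$, $x_s^r$ flanking the $0$th column differ in grading by $2|s|$). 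In particular, for a thin $L$-space knot with $\tau=-g$ your bookkeeping would assign width $0$ to every $\text{spin}^c$ structure and could not single out $[s']$, so the conclusion would not follow. For the same reason the claim that the $w_{s'}=0$ subcase is Lemma \ref{lem:mainlem2n} ``verbatim'' does not transfer: the hypothesis $|\tau(K)|<g(K)$ of that lemma is exactly what guarantees the figure-eight at height $g(K)-1$ that its computation compares against, and at the boundary value $\tau(K)=-g(K)$ that guarantee disappears.

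The paper's proof is instead organized around intersections of $l^s_r$ with $\overline{\gamma}$. For $w_{s'}=0$ it notes that each $MR^{[s]}$ contains a pair of gradings differing by exactly $2|s|$ (from $x_s^l$, $x_s^r$) and that $\text{Width}(MR^{[s]})$ is pinned between $2\widetilde{H}_s-1$ and $2\widetilde{H}_s$ (or the analogous bound with $\widetilde{V}_s$, depending on the sign of $s$), so matching both the width and the $2|s|$-gap forces $\widetilde{V}_s=\widetilde{H}_{s'}$ and $\widetilde{H}_s=\widetilde{V}_{s'}$, hence $s=-s'$ by thinness. For $w_{s'}>0$ it chains the grading differences $2(s+ir)$ of consecutive vertical $\overline{\gamma}$-intersections out to the extremal intersection $b^s$ in the $w_s$th column and shows $M_{rel}(b^{s'})>M_{rel}(b^s)$ by the same column-shift arithmetic you had in mind. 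To repair your write-up you would need to replace every comparison of $y$-type intersections by these $\overline{\gamma}$-intersections; figure-eights here sit at heights $n<g(K)=|\tau(K)|$ and by Proposition \ref{prop:closegens} only shift gradings by at most one relative to the nearby vertical intersection, so they never control the widths in this case.
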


\begin{proof}
Recall the labeling scheme from Figure \ref{fig:referenceintersections}, and both $U_s$ and $L_s$ defined just before Definition \ref{def:multiset}. The reference intersection $a^s$ is a non-vertical intersection immediately to the left of the $0th$ column if $s \geq 0$, and is similarly immediately to the right of the $0th$ column if $s < 0$. In general we will label these generators $x_s^l$ and $x_s^r$, respectively. Let us dispense with the $w_{s'}=0$ case first.

Notice that each $MR^{[s]}$ contains two elements whose difference is precisely $2|s|$. The two bigons we traverse from $x_s^l$ to $x_s^r$ involve the same regions and winding numbers as those in Figure \ref{fig:HVregions} (Part $c$), and so $M_{rel}(x_s^r) - M_{rel}(x_s^l) = 2L_s-1 - (2U_s-1) = 2H_s = 2s$. We also see that $2L_{s}-1 \leq \text{Width}(MR^{[s]}) \leq 2L_{s}$ if $s \leq 0$ and $2U_{s}-1 \leq \text{Width}(MR^{[s]}) \leq 2U_{s}$ if $s > 0$, with the right-hand, even equalities achieved if an appropriate generator from a simple figure-eight exists at height $s$. So if some $[s] \neq [\pm s']$ is to achieve $MR^{[s]} \cong MR^{[s']}$ up to translation, we should see that the widths of these multisets agree and that there exist pairs with grading differences $2|s|$ and $2|s'|$. These are only possibly simultaneously true if $U_{s} = L_{s'}$ and $L_{s} = U_{s'}$, which forces $s = -s'$ with $K$ thin. Thus, we may assume $w_{s'} > 0$.

If $w_{s'} > 0$, we can appeal to $MR^{[s']}$ achieving maximal width once again. Due to the formula for $\text{Wind}(P)$ when $\tau(K) < 0$, the grading difference between consecutive non-vertical intersections between $\overline{\gamma}$ and $l^s_r$ around the $i$th column is $2(s+ir)$. As before, this happens because $2L_{s+ir}-1 - (2U_{s+ir}-1) = 2(H_{s+ir} = 2(s+ir)$, which is also positive. Then among non-negative columns, the vertical intersection in final $w_s$th column, which we now denote by $b^s$, has the largest relative grading in $MR^{[s]}$. This is because the differences around any given column are positive, and the vertical intersection on the right side of the final $w_s$th column necessarily has a smaller relative grading than $b^s$. The same reasoning applies to non-positive columns (one way to see this is to appeal to hyperelliptic involution invariance to note that such a maximally graded vertical intersection belonging to a negative column is in correspondence to one belonging to a positive column associated to the conjugate $\text{spin}^{c}$ structure.) Thus, $\text{Width}(MR^{[s]})$ is either $M_{rel}(b^s)$ or $M_{rel}(b^{-s})$ when $w_{s} > 0$. We will obtain our desired contradiction by comparing $M_{rel}(b^{s'})$ to every $M_{rel}(b^s)$ with $s \neq \pm s'$, just as in the lemmas of the previous section. 

Chaining the grading differences of vertical intersection pairs from $b^s$ back to $a^s$, we see that
\begin{align*}
\displaystyle M_{rel}(b^s) = \left\{
	\begin{array}{l c}
	\displaystyle 2 \left( \sum_{i=0}^{w_s-1} (s+ir) + L_{s+w_{s}r} \right) - 1 & \, \text{if} \, s \geq 0\\
	\displaystyle 2 \left( \sum_{i=1}^{w_s-1} (s+ir) + L_{s+w_{s}r} \right) - 1 & \, \text{if} \, s \leq 0,\\
	\end{array}
	\right.
\end{align*}
with empty sums taken to be zero as before. Since it can be hectic determining when such a sum is empty, we break into more cases.

When $s < s'$ we have $w_{s}=w_{s'}$, and it is straightforward to check that
\[
M_{rel}(b^{s'}) - M_{rel}(b^s) \geq 2(L_{s'+w_{s'}r} - L_{s+w_{s'}r}) > 0.
\]
\noindent This follows because the various multiples of $(s'-s)$ are positive if they appear, and because $L_{s'+w_{s'}r} > L_{s+w_{s'}r}$ when $s < s'$.

Let us begin the $s' < s$ cases with $w_{s'}=1$. For $0 \leq s' < s$ we can once again use a column shift to see 
\[
M_{rel}(b^{s'}) - M_{rel}(b^s) = 2(s' + L_{s'+r} - L_{s}) > 0,
\]
since $s' \geq 0$ and $L_{s'+r} > L_{s}$. The same inequality holds if $s' \leq 0 < s$, together with dropping the $s'$ term. For $s' < s \leq 0$ with $w_{s}=0$, we are forced to have $\text{Width}(MR^{[s]}) = 2L_{s} - 1$ if $s \geq 0$ and $\text{Width}(MR^{[s]}) = 2U_{s} - 1$ if $s \leq 0$, since $\text{Width}(MR^{[s']})$ is guaranteed to be odd. For the former we get 
\[
M_{rel}(b^{s'}) - M_{rel}(b^s) = 2(L_{s'+r}-L_{s}) > 0,
\]
\noindent since $s < s'+r$. The latter yields
\[
M_{rel}(b^{s'}) - M_{rel}(b^s) = 2(L_{s'+r}-U_{s}) > 0,
\]
\noindent since $s > s'$.

Finally we are left with $w_{s'} > 1$ with $s' < s$ (which then implies $w_s = w_{s'}-1$). If we have $0 \leq s' < s$, then the fact that $L_{s'+w_{s'}r}$ is maximal ensures
\begin{align*}
M_{rel}(b^{s'})-M_{rel}(b^s) &= 2 \left( \sum_{i=0}^{w_{s'}-1} (s'+ir) - \sum_{i=0}^{w_{s}-1} (s+ir) \right) + 2(L_{s'+w_{s'}r}- L_{s+(w_{s'}-1)r}) \\
(\text{column shift}) \,\,\,\,\,\, &= 2 \left(s' + \sum_{i=1}^{w_{s'}-1} (s'+ir) - \sum_{i=1}^{w_{s'}-1} (s+(i-1)r) \right) \\
& \,\,\,\,\,\,\,\,\,\,\,\,\,\,\,\,\,\,\,\,\,\,\,\,\,\,\,\,\,\,\,\,\,\,\,\,\,\,\,\,\,\,\,\,\,\,\,\,\,\,\,\, + 2(L_{s'+w_{s'}r}- L_{s+(w_{s'}-1)r}) \\
&= 2s' + 2(w_{s'}-1)(s'+r-s) + 2(L_{s'+w_{s'}r}- L_{s+(w_{s'}-1)r}) \\
&> 0,
\end{align*}
\noindent Analogously, the same inequality holds true if $s' \leq 0 < s$ by dropping the $2s'$ term. For $s' < s \leq 0$ a single $(s'+r-s)$ term disappears, but the inequality holds since $s'+r-s > 0$ and $L_{s+w_{s'}r}- L_{s+(w_{s'}-1)r} > 0$.

Then since $M_{rel}(b^{s'}) > M_{rel}(b^{s})$ for every configuration of $s$ relative to $s'$ for $w_{s'}>0$, we have $\text{Width}(MR^{[s']}) > \text{Width}(MR^{[s]})$. Together with the argument for $w_{s'}=0$, this completes the proof.
\end{proof}

\noindent \textbf{Subcase B2: $\mathbf{\tau(K)=g(K)}$}. Let us consider $1 < r < 2(g(K)-1)$ first, delaying the penultimate slope to Lemma \ref{lem:largelem} and the maximal slope to Lemma \ref{lem:mainlem5}. If $r < 2(g(K)-1)$, then $l^{s'}_r$ intersects $\overline{\gamma}$ more than once for $s' \equiv g(K)-1 \,\, (\text{mod} \,\, r)$. Our approach involves different arguments depending on whether $l^{s'}_r$ makes non-vertical intersections on both sides of the $0$th column. Also, since $\tau(K)$ is positive recall that the reference intersection $a^s$ is once again the vertical intersection belonging to the $0$th column.

\begin{lemma}
Suppose $K$ is thin with $\tau(K)=g(K)$, the surgery slope satisfies $1 < r < 2(g(K)-1)$, and that there exists a $k$ properly dividing $r$ so that every $[s] \in \text{Spin}^{c}(S^3_r(K))$ satisfies $MR^{[s]} \cong MR^{[s+k]}$ up to translation.
\begin{itemize}
\item If $r < g(K)-1$, then $MR^{[s]} \cong MR^{[s']}$ up to translation only if $[s]=[-s']$.
\item If $r \geq g(K)-1$, then $\tau(K)=g(K)=r=3$.
\end{itemize}
\label{lem:mainlem4}
\end{lemma}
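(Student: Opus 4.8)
The situation is that $\tau(K)=g(K)$, the slope satisfies $1 < r < 2(g(K)-1)$, and we are assuming for contradiction that periodicity holds: some proper divisor $k \mid r$ makes $MR^{[s]} \cong MR^{[s+k]}$ up to translation for all $[s]$. As usual I take $[s']$ with $s' \equiv g(K)-1 \pmod r$; since $r < 2(g(K)-1)$, the line $l^{s'}_r$ meets $\overline{\gamma}$ more than once, so $w_{s'} \geq 1$. The proof strategy should parallel Lemma \ref{lem:mainlem3}: produce a distinguished ``outermost'' vertical intersection $b^s$ on the left of the $w_s$th column (which by Proposition \ref{prop:windformula}, since $\tau(K)>0$ contributes $\widetilde H_s$ from the $0$th column plus $\sum_{i=1}^{w_s}(s+ir)$, has the largest relative grading in $MR^{[s]}$ when $w_s>0$), and show $M_{rel}(b^{s'})$ is strictly maximal among all $M_{rel}(b^s)$ with $[s]\neq[\pm s']$. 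The key input that makes $s'$ special is genus detection: $\overline{\gamma}$ crosses at height $g(K)$, so the chain of vertical-intersection grading jumps for $l^{s'}_r$ runs all the way up to a crossing at height $s'+w_{s'}r = g(K)-1$, i.e. $\widetilde H_{s'+w_{s'}r}$ is as large as possible, whereas for other $[s]$ the analogous top term is strictly smaller.

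\textbf{Key steps.} First I would establish the explicit formula for $M_{rel}(b^s)$ by chaining grading differences of consecutive vertical intersection pairs from $a^s$ up to $b^s$, using $2\mathrm{Wind}(P)$ contributions $2(s+ir)$ per column exactly as in the $\tau(K)<0$ computation of Lemma \ref{lem:mainlem3}, but now with the $0$th-column term being $2\widetilde H_s$ rather than $0$ (reflecting $\tau(K)\geq 0$) and no reversed grading arrow. Second, I would split into the cases the lemma statement itself distinguishes: when $r < g(K)-1$ the line $l^{s'}_r$ has vertical intersections on $both$ sides of the $0$th column, and the comparison $M_{rel}(b^{s'}) - M_{rel}(b^s) > 0$ for $[s]\neq[\pm s']$ goes through for each configuration of $s$ versus $s'$ (with $s<s'$ giving $w_s=w_{s'}$ and the inequality reducing to $\widetilde H_{s'+w_{s'}r} > \widetilde H_{s+w_{s'}r}$; $s>s'$ requiring a one-column shift and $w_s=w_{s'}-1$), yielding $\mathrm{Width}(MR^{[s']}) > \mathrm{Width}(MR^{[s]})$ and hence $MR^{[s]}\not\cong MR^{[s']}$ unless $[s]=[-s']$; this contradicts periodicity with any proper $k$ unless the period forces $[s']=[-s']$, which is the stated conclusion. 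Third, when $r \geq g(K)-1$ the width comparison degenerates — there are too few columns, $w_{s'}$ can equal $1$ with $s'$ small, and $MR^{[s']}$ need not be strictly widest — so here I would instead push the arithmetic to pin down the parameters: $r \geq g(K)-1$, $\tau(K)=g(K)$, and $1<r<2(g(K)-1)$ together with the existence of a proper divisor $k$ and the periodicity constraint should force $g(K)-1 \leq r$ and $r \leq g(K)+1$ or so, and then checking the small remaining possibilities (using that $k$ properly divides $r$, so $r$ is not prime, in particular $r\geq 4$ is needed unless $r=$ something small) collapses everything to $\tau(K)=g(K)=r=3$, the one genuinely exceptional configuration (this is the $T(2,n)$-adjacent case).

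\textbf{Main obstacle.} The hard part is the $r \geq g(K)-1$ branch. When the surgery slope is large relative to the genus, $l^{s'}_r$ only crosses $\overline{\gamma}$ twice or three times, the multiset $MR^{[s']}$ has small width, and the clean ``$MR^{[s']}$ is maximal'' mechanism fails; one must instead combine the width/pair-difference data of $MR^{[s]}$ with the divisibility of $r$ and the inequalities $g(K)-1 \leq r < 2(g(K)-1)$ to squeeze out that $g(K)$, $r$ are both small and then eliminate all cases but $(g,\tau,r)=(3,3,3)$. Getting the bookkeeping right — in particular tracking which sums in the $M_{rel}(b^s)$ formula are empty, and ensuring the column-shift identities $\widetilde H_{s+ir}-\widetilde H_{s+(i-1)r} \geq 0$ and $\widetilde H_{s}-\widetilde V_{s}=s$ are applied with the correct parity corrections — is where the real work lies; the $r < g(K)-1$ branch is essentially a transcription of Lemma \ref{lem:mainlem3}'s argument with $\widetilde H$ and $\widetilde V$ swapped.
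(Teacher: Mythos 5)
There is a genuine gap, and it sits exactly where you flag the ``main obstacle'': the $r \geq g(K)-1$ branch. Your plan is to ``push the arithmetic'' --- use $g(K)-1 \leq r < 2(g(K)-1)$ together with the divisibility of $r$ to squeeze $r$ down to a handful of small values and then check them, in particular invoking ``$k$ properly divides $r$, so $r$ is not prime.'' This does not work: $k=1$ is a proper divisor (and indeed $k=1$ is exactly what occurs in the exceptional conclusion $\tau(K)=g(K)=r=3$, where $r$ is prime), and the inequalities alone permit every $r$ up to $2g(K)-3$ for arbitrarily large $g(K)$, so no arithmetic squeeze pins $r$ down. What the paper actually does here is a sequence of multiset--multiplicity and parity arguments: for $r=g(K)-1$ (so $s'=0$) it shows the top grading $2\widetilde{H}_{s'}-1$ occurs in $MR^{[s']}$ with multiplicity at least two but can occur at most once in any other $MR^{[s]}$ (a figure-eight in the first column is excluded because $s+r=\tau(K)-2$ has the wrong parity); for $g(K)-1 < r < 2g(K)-3$ it shows $MR^{[s]}\cong MR^{[s']}$ forces $[s]=[s'\pm 1]$, hence $k=1$, and then the presence of width-$1$ classes forces $r\leq 3$, with $r=2$ eliminated as before, leaving only $r=3$, $s'=-1$, i.e. $\tau(K)=g(K)=r=3$; and the boundary case $r=2g(K)-3$ needs its own count (every width is $1$, $e_{s'}=0$, and a dimension/parity count using that $k$ is odd). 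None of this is recoverable from your sketch, and your proposed substitute would in fact erase the very exceptional case the lemma is designed to isolate.

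A secondary, smaller issue: even the $r < g(K)-1$ branch is not ``a transcription of Lemma \ref{lem:mainlem3}.'' When $w_{s'}=1$ and $-s' < s < s'$ the width comparison has a genuine equality case ($\widetilde{H}_s=\widetilde{H}_{s'}$ with $s=s'-1$), and the paper must pass to the finer structure of the multisets --- the pair of vertical intersections whose grading difference is $2|s|$ versus $2|s'|$, i.e. the mismatch $\widetilde{V}_s\neq\widetilde{V}_{s'}$ --- and, in the $w_{s'}>1$ comparison with $s<s'$, resolve the equality case $\widetilde{H}_{s'}=\widetilde{H}_s$, $s=s'-1$, $w_{s'}=2$ by a parity argument. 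Your plan asserts strict inequalities throughout and would stall at precisely these equality configurations; relatedly, your hybrid formula for $M_{rel}(b^s)$ (carrying both a $\widetilde{H}_s$ term and a top term $\widetilde{H}_{s+w_sr}$) is not the correct chain for $\tau(K)=g(K)$, where the paper's formula is $2\bigl(\widetilde{H}_s+\sum_{i=1}^{w_s-1}(s+ir)\bigr)-1$.
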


\begin{proof}

If $r < g(K)-1$, then the slope of $l^{s'}_r$ is small enough so that intersecting it with $\overline{\gamma}$ produces vertical intersections in at least 3 columns of $\widetilde{T}$. We know $w_{s'} > 0$ since $r < 2(g(K)-1)$, so suppose $w_{s'}=1$. We have non-vertical intersections with $\overline{\gamma}$ to the left and right of this column, which we can label $c^{s'}$ and $b^{s'}$, respectively. Then $M_{rel}(c^{s'}) = 2V_{s'}-1$ and $M_{rel}(b^{s'}) = 2H_{s'}-1$, and so $2H_{s'}-1 \leq \text{Width}(MR^{[s']}) \leq 2H_{s'}$ since $s' > 0$ yields $H_{s'} > V_{s'}$. If some $[s] \neq [\pm s']$ satisfies $MR^{[s]} \cong MR^{[s']}$ up to translation then the parities of their widths must agree. Under the same labeling convention for intersections associated to $[s]$, we see that $2V_{s}-1 \leq \text{Width}(MR^{[s]}) \leq 2V_{s}$ if $s \leq 0$ and $2H_{s}-1 \leq \text{Width}(MR^{[s]}) \leq 2H_{s}$ if $s \geq 0$.

For $0 < s' < s$, we compute for either parity of width that
\[
\text{Width}(MR^{[s']})-\text{Width}(MR^{[s]}) = 2(H_{s'}-V_s).
\]
This implies that $V_s = H_{s'}$, which is impossible when $s' > 0$. Similarly, if $s < -s'$ then the analogous statement holds true using $H_s$. 

When $-s' < s < s'$, something interesting occurs. In addition to $l^{s'}_r$, we see that $l^s_r$ successfully makes two non-vertical intersections on both sides of the $0$th column. Also since $K$ is thin, it follows that $V_{s} \leq H_{s'}$ and $H_{s} \leq H_{s'}$, with equality only possible when $s = -s'+1$ or $s=s'-1$, respectively. However, this results in the configuration shown for the latter situation in Figure \ref{fig:sameHdiffV}. We see that while the widths of $MR^{[s]}$ and $MR^{[s']}$ can agree if $H_{s}=H_{s'}$, this necessarily results in $V_{s} \neq V_{s'}$ since $K$ is thin. This is true vice versa as well, and so the multisets cannot both contain the same relative gradings for their respective vertical intersection pairs. Thus we must consider $w_{s'} > 1$, and we will do so following similar computations to those in Lemma \ref{lem:mainlem3}.

\begin{figure}[!ht]
\labellist
\small\hair 2pt
\pinlabel $\xrightarrow{V_{s}-V_{s'}}$ at -15 160
\pinlabel $\xleftarrow{H_{s'}-H_{s}}$ at 80 160
\endlabellist
\centering
\includegraphics[scale=0.7]{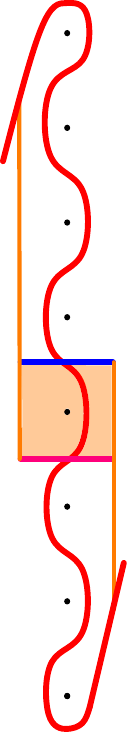}
\caption{If $H_{s}=H_{s'}$ and $s=s'-1$, then $V_{s}-V_{s'}=1$ when $K$ is thin.}
\label{fig:sameHdiffV}
\end{figure}

When $w_{s'} > 1$ the intersection with largest relative grading in $MR^{[s]}$ comes from the furthest non-vertical intersection, which we will update and label $b^s$ when $s \geq 0$ or $c^s$ if $s \leq 0$. Since we can use the hyperelliptic involution invariance of $\widehat{\textit{HF}}(M)$ to treat such a $c^{s}$ as $b^{-s}$, let us only compare $M_{rel}(b^{s'})$ to the various $M_{rel}(b^s)$. Chaining grading differences between adjacent non-vertical intersections from $b^s$ back to $a^s$, we have
\[
\displaystyle M_{rel}(b^s) = 2 \left( H_{s} + \sum_{i=1}^{w_s-1} (s+ir) \right) - 1.
\]

If $s < s'$ then $w_s=w_{s'}$, and we compute
\begin{align*}
M_{rel}(b^{s'})-M_{rel}(b^s) &= 2 \left( H_{s'} - H_s + \sum_{i=1}^{w_s-1} (s'+ir) - \sum_{i=1}^{w_s-1} (s+ir) \right) \\
&= 2 (H_{s'}-H_s + (w_{s'}-1)(s'-s)) \\
&\geq 1.
\end{align*}
We have equality only if $H_{s'}=H_{s}$, $s=s'-1$, and $w_{s'}=2$. In this case, we have $\text{parity}(s'+2r) = \text{parity}(s')$ regardless of $r$. However $\text{parity}(s'+2r) \neq \text{parity}(\tau(K))$, and so $\text{parity}(s)=\text{parity}(\tau(K))$. This implies $\text{Width}(MR^{[s]}) \leq M_{rel}(b^s)$, which means we cannot have $s < s'$. 

If $s > s'$, then with $w_s=w_{s'}-1$ we obtain
\begin{align*}
M_{rel}(b^{s'})-M_{rel}(b^s) &= 2 \left( H_{s'} - H_s + \sum_{i=1}^{w_{s'}-1} (s'+ir) - \sum_{i=1}^{w_s-1} (s+ir) \right) \\
(\text{column shift}) \,\,\,\,\,\, &= 2 \left( s'+r+ H_{s'} - H_s + \sum_{i=2}^{w_{s'}-1} (s+ir) - \sum_{i=2}^{w_{s'}-1} (s+ir) \right) \\
&= 2(s'+r) - 2(H_{s}-H_{s'}) +2(w_{s'}-2)(s'+r-s). \\
\end{align*}

\noindent Now $s-s' \leq 2(H_s - H_{s'}) \leq s-s'+1$ when $K$ is thin by careful inspection of these regions. This implies 
\begin{align*}
M_{rel}(b^{s'})-M_{rel}(b^s) &= 2(s'+r) - 2(H_{s}-H_{s'}) +2(w_{s'}-2)(s'+r-s) \\
&\geq 2(s'+r) - (s-s'+1) + 2(w_{s'}-2)(s'+r-s) \\
&= 2s'+r-1 + (2w_{s'}-3)(s'+r-s) \\
&>1.
\end{align*}

\noindent Altogether, these grading comparisons are enough to see that $MR^{[s]} \not \cong MR^{[s']}$ up to translation when $r < g(K)-1$. Next we look at the cases with larger surgery slopes.\\

If $r=g(K)-1$, then $w_{s'}=1$ and $s'=0$. Due to hyperelliptic involution invariance, we can assume $s \neq s'$ satisfies $s < 0$. Notice that $MR^{[s']}$ contains $2H_{s'}-1$ with multiplicity at least two since $l^{s'}_r$ generates non-vertical intersections on both sides of the 0th column and $V_{s'}=H_{s'}$. The only way that $MR^{[s]}$ could contain this grading with multiplicity greater than one is if a simple figure-eight component in the 1st column has an intersection with $l^s_r$. The nearby vertical intersection $a^{s+r}$ has $M_{rel}(a^{s+r}) = 2H_{s'}-2$, and so we would require $\text{parity}(s+r) \neq \text{parity}(\tau(K))$ in order for an intersection with a simple figure-eight to have the desired grading. However $s+r = \tau(K)-2$, and so $MR^{[s]}$ cannot contain $2H_{s'}-1$ more than once. Thus, no $[s] \neq [s']$ satisfies $MR^{[s]} \cong MR^{[s']}$ up to translation when $r=g(K)-1$.

We still have $w_{s'}=1$ if $r > g(K)-1$, but now $s' < 0$. The crux of the argument in the previous case relied on $H_{s'} > 1$. This holds more generally when $r < 2g(K)-3$, except now $MR^{[s']}$ need only contain $2H_{s'}-1$ once. Since $\text{Width}(MR^{[s']}) \geq 3$, the above argument still applies to show that $MR^{[s]} \cong MR^{[s']}$ up to translation only if $[s] = [s' \pm 1]$. This forces $k=1$, which in turn forces $r \leq 3$ so that there cannot exist $[s'+\alpha k]$ with $\text{Width}(MR^{[s'+\alpha k]})=1$. The possibility $r=2$ is handled exactly as in the $r=g(K)-1$ argument, and so we must have $r=3$. This means $s' = -1$, and so $\tau(K)=g(K)=r=3$.\\

If $r = 2g(K)-3$, then only $l^{\pm s'}_r$ generates non-vertical intersections between columns of $\widetilde{T}$. All other $l^s_r$ intersect $\widehat{\textit{HF}}(M)$ only in the 0th column, which means that every $\text{Width}(MR^{[s]}) = 1$. Since $\text{parity}(s') = \text{parity}(\tau(K))$ when $r=2g(K)-3$, we must have $e_{s'}=0$. This is because a simple figure-eight component at this height would contribute an intersection with relative grading $-1$ to $MR^{[s']}$, which would yield $\text{Width}(MR^{[s']}) = 2$ and prevent periodicity. We have dim $\widehat{\textit{HF}}(S^3_r(K), [s']) = 3 + 2e_{s'+r}$, and so some $[s'+k]$ satisfying $MR^{[s'+k]} \cong MR^{[s']}$ up to translation forces $1+2e_{s'+k} = 3+2e_{s'+r}$, or $e_{s'+k} = e_{s'+r}+1$. If necessary, translate $MR^{[s'+k]}$ so that $0$ is the smallest element. The only way that the multiplicities of $0$ and $1$ agree is if $MR^{[s]}$ contains more 0's than 1's, which happens only when $\text{parity}(s'+k) = \text{parity}(\tau(K))$. But $\text{parity}(s') = \text{parity}(\tau(K))$ as well, which is a contradiction since $k$ is odd when $r$ odd.
\end{proof}

We return to the two unhandled cases of $\tau(K)=g(K)=r=3$ and $r=2(g(K)-1)$ shortly in Subsection \ref{sec:dinvt}, and for now are left with the case where $r=2g(K)-1$. Because $\tau(K)=g(K)$, there is no guaranteed simple figure-eight at height $g(K)-1$. This small difference is enough of an issue if $K$ is an $L$-space knot, since each $MR^{[s]} = \left\{0\right\}$ means $\widehat{\textit{HF}}$ cannot provide an obstruction. With existing techniques, we can only show the following:

\begin{lemma}
Suppose $K$ is thin with $\tau(K)=g(K)$, and let $r = 2g(K)-1$. If there exists a $k$ properly dividing $r$ such that every $[s] \in \text{Spin}^{c}(S^3_r(K))$ satisfies $\widehat{\textit{HF}}(S^3_r(K), [s]) \cong \widehat{\textit{HF}}(S^3_r(K), [s+k])$, then $K$ is an $L$-space knot.
\label{lem:mainlem5}
\end{lemma}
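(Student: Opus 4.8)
The plan is to argue by contrapositive: assuming $K$ is \emph{not} an $L$-space knot, I will show no proper divisor $k$ of $r = 2g(K)-1$ can make all $\widehat{\textit{HF}}(S^3_r(K),[s])$ periodic with period $k$. Since $\tau(K) = g(K)$, the essential curve $\overline{\gamma}$ already realizes the genus, so every line $l^s_r$ meets $\widehat{\textit{HF}}(M)$ only in the $0$th column (the slope $r = 2g(K)-1$ is large enough), and each $l^s_r$ crosses $\overline{\gamma}$ exactly once at height $s$. Hence $\dim \widehat{\textit{HF}}(S^3_r(K),[s]) = 1 + 2e_s$, where $e_s$ counts simple figure-eight components at height $s$, and $MR^{[s]}$ records $\{0\}$ together with the pairs $\{-1,0\}$ or $\{0,1\}$ contributed by each figure-eight at height $s$ (via Proposition \ref{prop:closegens}, since $|s| \le g(K)-1 = |\tau(K)|-1 < |\tau(K)|$). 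Crucially, because $\tau(K) \ge 0$ and by the parity dichotomy following Proposition \ref{prop:windformula}, a figure-eight at height $s$ contributes $\{-1,0,0\}$ to $MR^{[s]}$ exactly when $\mathrm{parity}(s) = \mathrm{parity}(\tau(K))$, and contributes $\{0,0,1\}$ otherwise.

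The key observation is the following. If $K$ is not an $L$-space knot, then $\widehat{\textit{HFK}}(K) \neq \F$ in some Alexander grading strictly between $-g(K)$ and $g(K)$, which forces at least one simple figure-eight component to exist — say the topmost one sits at height $n_0$ with $0 \le n_0 \le g(K)-1$; by the involution symmetry $e_{n_0} = e_{-n_0} \ge 1$. I would then pick the distinguished $\text{spin}^{c}$ structure to be $[s'] := [n_0]$ (or, if it is cleaner, the one of largest $|s|$ carrying a figure-eight), so that $MR^{[s']}$ has width $1$ and contains $0$ with a multiplicity that is strictly pinned down by $e_{s'}$ and $\mathrm{parity}(s')$. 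Now suppose for contradiction that a proper divisor $k \mid r$ gives $\widehat{\textit{HF}}(S^3_r(K),[s]) \cong \widehat{\textit{HF}}(S^3_r(K),[s+k])$ for all $[s]$. Iterating, $e_{s'} = e_{s'+k} = e_{s'+2k} = \cdots$; but since $k < r = 2g(K)-1$ and $k \mid r$, the arithmetic progression $s' + k\Z \pmod r$ visits a residue $[t]$ with $|t| > g(K)-1$, i.e. with no figure-eight at that height, so $e_t = 0$. This already forces $e_{s'} = 0$ unless the dimensions can be matched by a grading \emph{translation} — which is where the parity argument enters: $r = 2g(K)-1$ is odd, so $k$ is odd, hence $\mathrm{parity}(s'+k) \ne \mathrm{parity}(s')$, and moving along the progression flips the "direction" ($\{-1,0,0\}$ versus $\{0,0,1\}$) of each figure-eight contribution at alternating steps. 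Tracking the multiplicity of the smallest grading in each $MR^{[s'+jk]}$ (after normalizing $0$ to be smallest) against $e_{s'+jk}$, the periodicity relation $1 + 2e_{s'+jk} = 1 + 2e_{s'+(j+1)k}$ together with the parity flip yields the same multiset-multiplicity contradiction used in the final paragraph of Lemma \ref{lem:mainlem4}: the two candidate multisets either have different widths or differ in the multiplicity of their minimal element. Since some step lands on $e_t = 0$, we conclude $e_s = 0$ for \emph{all} $s$ in the progression, and by running this over every residue class (or invoking that the progression generated by $k$ together with the known vanishing forces all $e_s$ to be $0$), $\widehat{\textit{HFK}}(K)$ has rank one in every Alexander grading, i.e. $K$ is an $L$-space knot — contradiction.

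Concretely the step order is: (1) record $\dim \widehat{\textit{HF}}(S^3_r(K),[s]) = 1 + 2e_s$ and the explicit form of $MR^{[s]}$ from Proposition \ref{prop:closegens} and the parity dichotomy; (2) fix $[s'] = [n_0]$ with $e_{s'} \ge 1$, assuming $K$ is not an $L$-space knot; (3) assume a proper divisor $k$ of $r$ gives the hypothesized periodicity and deduce $e_{s'} = e_{s'+jk}$ for all $j$; (4) use $k < r$, $k \mid r$ to find $j$ with $s' + jk \equiv t \pmod r$, $|t| > g(K)-1$, so $e_t = 0$; (5) run the parity/multiplicity bookkeeping (as in Lemma \ref{lem:mainlem4}'s last paragraph) along the chain to derive that the periodicity forces $e_{s'} = 0$, contradicting (2); (6) conclude contrapositively.

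I expect step (5) to be the main obstacle: one must carefully verify that the grading \emph{translations} permitted in "$\cong$ up to absolute grading" (here we have genuine $\widehat{\textit{HF}}$ isomorphism respecting only the relative grading within each $\text{spin}^{c}$ summand, so translations are genuinely free) cannot conspire to match a width-$1$ multiset $\{-1,0,0\}$ to $\{0,0,1\}$ — this is exactly where oddness of $r$ (hence of $k$) is essential, and the argument is the multiplicity-of-the-minimal-element count rather than width. A secondary subtlety is making sure the progression $s' + k\Z \pmod r$ really does reach a residue outside $[-(g(K)-1), g(K)-1]$; since $k$ is a \emph{proper} divisor of the odd number $r = 2g(K)-1$, we have $k \le r/3 < g(K)-1$ for $g(K)$ large (and the small-genus cases $g(K) \le$ a constant are checked directly, as elsewhere in the paper), so the progression has at least three distinct residues and must exit the "figure-eight band." Everything else is routine grading arithmetic already set up in Sections \ref{sec:gradings} and \ref{subsec:lemsoup}.
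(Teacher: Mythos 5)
There is a genuine gap, and it sits exactly at your step (4). With $r = 2g(K)-1$, the centered representatives of $\Z/r\Z$ run over $\{-(g(K)-1),\dots,g(K)-1\}$, so \emph{every} residue $[t]$ has $|t| \leq g(K)-1$: there is no spin$^c$ structure whose line $l^t_r$ crosses at a height outside the range where simple figure-eights can live. Your claim that the progression $s' + k\Z \pmod r$ must visit some $[t]$ with $|t| > g(K)-1$ (and hence $e_t = 0$) is therefore impossible, and the ``exit the figure-eight band'' mechanism --- including the remark that $k \leq r/3 < g(K)-1$ forces an exit --- cannot produce the vanishing you need. Since your step (5) is phrased as propagating that vanishing back along the chain (``since some step lands on $e_t=0$\dots''), the argument as structured does not close.

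The good news is that the ingredient you flag as secondary is actually the whole proof, and you do not need any chain or any vanishing height. The paper's argument is: assuming $K$ is not an L-space knot, take $t$ to be the height of the \emph{lowest} simple figure-eight (so $t \leq 0$, which keeps $t+k$ a centered representative and makes parities of $[t]$ and $[t+k]$ honestly comparable). Since each $l^s_r$ meets $\overline{\gamma}$ once, $\dim \widehat{\textit{HF}}(S^3_r(K),[s]) = 1+2e_s$, so the hypothesized isomorphism between $[t]$ and $[t+k]$ forces $e_{t+k} = e_t \geq 1$. Now apply exactly the multiplicity count you describe: by Proposition \ref{prop:closegens} (and the parity dichotomy after it), after normalizing the minimum to $0$ the multiset $MR^{[s]}$ has $(e_s, e_s+1)$ or $(e_s+1, e_s)$ as multiplicities of $0$ and $1$ according to whether $\mathrm{parity}(s)$ agrees with $\mathrm{parity}(\tau(K))$; with $e_t = e_{t+k} \geq 1$ these cannot match unless $\mathrm{parity}(t) = \mathrm{parity}(t+k)$, i.e.\ $k$ is even --- contradicting $k \mid r = 2g(K)-1$ odd. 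That single comparison, as in the last paragraph of Lemma \ref{lem:mainlem4}, already yields the contradiction; rewrite your steps (3)--(5) to make this the main (and only) engine, and drop the band-escape step entirely.
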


\begin{proof}
Suppose for the sake of contradiction that $K$ is not an $L$-space knot, meaning that dim $\widehat{\textit{HF}}(S^3_r(K), [s]) > 1$ for some $[s] \in \text{Spin}^{c}(S^3_r(K))$. Each $l^s_r$ intersects $\overline{\gamma}$ precisely once since $r \geq 2g(K)-1$. In order to have dim $\widehat{\textit{HF}}(S^3_p(K),[s]) > 1$ for some $[s]$, we need for $\widehat{\textit{HF}}(M)$ to have a simple figure-eight component at height $s$. Let $t$ be the height of the lowest simple figure-eight component. We have $e_t$ many simple figure-eights at height $t$, and so we must also have $e_{t+k} = e_{t}$ many simple figure-eight components at height $t+k$ to satisfy 
\[
\text{dim} \,\, \widehat{\textit{HF}}(S^3_r(K), [t]) = \, \text{dim} \,\, \widehat{\textit{HF}}(S^3_r(K), [t+k]).
\] 

If $\text{parity}([t]) \neq \text{parity}([t+k])$, then one of $MR^{[t]}$ or $MR^{[t+k]}$ contains $-1$ and would need to be translated by 1 to make 0 the smallest element by Proposition \ref{prop:closegens}. However, this results in both multisets having unequal multiplicities of $0$'s and $1$'s. This would lead to $MR^{[t]} \not \cong MR^{[t+k]}$ up to translation, and so we must have $\text{parity}([t+k]) = \text{parity}([t])$. However this condition implies that $k$ is even, which contradicts $r=2g(K)-1$ being odd. Therefore, $K$ must be an $L$-space knot.
\end{proof}

\subsection{Obstructions from Absolute Gradings}
\label{sec:dinvt}

Until now, we have primarily appealed to information carried by $\widehat{\textit{HF}}(S^3_r(K))$ as this is the easier flavor of Heegaard Floer homology computable by immersed curves techniques. However, we now need to involve the absolutely, $\Q$-graded, $+$-flavor of Heegaard Floer homology to handle the remaining cases. When considering $S^3_r(K) = Y \# Z$ with $|H^2(Y)|=k<\infty$, we will use properties of the $d$-invariants mentioned in Section \ref{sec:background} in order to obtain a relationship between $r$, $k$, and the $V$'s associated to $K$. We initially settle the curious $\tau(K)=g(K)=r=3$ case, and afterwards assemble the proof of Theorem \ref{thm:main}.

\begin{lemma}
Let $K$ be a thin knot with $\tau(K)=g(K)=3$. Then $S^3_3(K)$ is irreducible.
\label{lem:dinvts}
\end{lemma}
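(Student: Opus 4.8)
The statement to be proved is that a thin knot $K$ with $\tau(K)=g(K)=3$ has $S^3_3(K)$ irreducible. We argue by contradiction: suppose $S^3_3(K)\cong Y\#Z$ is reducible. By the background results quoted in the introduction, the reducing slope is integral, one summand is a non-trivial lens space, and $|r|>1$; since $r=3$ here, one summand --- say $Y$ --- is a lens space with $|H^2(Y)|=k\in\{1,3\}$ (a $k=3$ lens-space summand is $L(3,q)$, and $k=1$ is impossible since that would make $Y=S^3$ and the surgery irreducible, contradicting reducibility unless $Z$ itself is reducible, which is excluded). So $k=3$, and then $Z$ is an integer homology sphere. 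Lemma~\ref{lem:periodicity} with $k=3=r$ forces $\widehat{\textit{HF}}(S^3_3(K),[s+3\alpha])\cong\widehat{\textit{HF}}(S^3_3(K),[s])$ for all $\alpha$, i.e. the three $\spinc$ summands are all relatively-graded isomorphic; this is precisely the periodicity hypothesis of Lemma~\ref{lem:mainlem4} with $k=3$ (properly dividing $r=3$ is vacuous here, so one must be slightly careful --- $k=r$ gives the trivial periodicity that Lemma~\ref{lem:mainlem4} formally excludes). The output of Lemma~\ref{lem:mainlem4} in the case $r=g(K)-1$... wait: here $r=3=g(K)=3$, so $r\geq g(K)-1=2$, and the lemma's conclusion in that regime is exactly the degenerate configuration $\tau(K)=g(K)=r=3$ --- so $\widehat{\textit{HF}}$ alone does not obstruct, and we genuinely need the absolute grading input, which is why this lemma lives in Section~\ref{sec:dinvt}.

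The substance is therefore a $d$-invariant computation. Since $\tau(K)=g(K)=3$ and $K$ is thin, the staircase/thin structure (via~\cite{Pet13}, or read off from the immersed curve $\overline\gamma$ which wraps at height $\tau(K)=3$) pins down the sequence $V_s$: one gets $V_0=?$, and in any case $V_s=0$ for $s\geq g(K)=3$, while $V_s>0$ for $0\leq s<3$, with the precise values determined by thinness; for a thin knot with $\tau=g=3$ one computes $V_0,V_1,V_2$ explicitly (for the model staircase these are $2,1,1$ or $1,1,0$ depending on parity --- I would nail this down from the curve picture and $H_s-V_s=s$ of Lemma~\ref{lem:HminusV}). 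Then Proposition~\ref{prop:dinvtsurg} gives
\[
d(S^3_3(K),[s]) = d(L(3,1),[s]) - 2\max\{V_s,V_{3-s}\}
\]
for $s=0,1,2$, and the known values $d(L(3,1),[s])$ from~\cite[Prop.~4.8]{OS03a}. On the other hand, additivity of $d$-invariants under connected sum plus $d(Z,\cdot)=0$ for $Z$ an integer homology $L$-space (or more generally $d(Z)\equiv 0\bmod 2\Z$, and one tracks it) forces
\[
d(S^3_3(K),[s]) = d(L(3,q),[s']) + d(Z),
\]
so the multiset $\{d(S^3_3(K),[s])\}_{s}$ must, up to a global shift by $d(Z)$, coincide with $\{d(L(3,q),[s'])\}$ for some lens space $L(3,q)$ --- and lens-space $d$-invariant vectors are rigid. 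The contradiction comes from comparing these two expressions: the $-2\max\{V_s,V_{3-s}\}$ corrections computed from the thin $K$ cannot be absorbed into the finite list of $d$-invariant vectors of the $L(3,q)$'s (up to overall shift), because the $V$'s for $\tau=g=3$ are large enough --- specifically $V_0\geq 1$ --- to push the $d$-invariants out of the admissible range.

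The main obstacle, and the step requiring the most care, is the exact bookkeeping of the $d(L(3,\cdot),[s])$ values together with the $\spinc$-labelling correspondence: Proposition~\ref{prop:dinvtsurg} labels $\spinc$ structures on $S^3_3(K)$ via the mapping-cone/Euclidean convention, the connected-sum decomposition labels them via $\spinc(L(3,q))\times\spinc(Z)$, and the periodicity from Lemma~\ref{lem:periodicity} is what reconciles the two --- so one has to verify that the induced identification of $\Z/3\Z$ is compatible with $\spinc$-conjugation symmetry $d(Y,\s)=d(Y,\bar\s)$. Once the labels are fixed, the remaining inequality is a short finite check: show $\max\{V_0,V_3\}=V_0\geq 1$ (true since $\tau(K)=g(K)$ forces $V_0\geq \lceil g/2\rceil \geq 1$ for the thin staircase), and observe that this makes the three numbers $d(S^3_3(K),[0]),d(S^3_3(K),[1]),d(S^3_3(K),[2])$ fail to form (a shift of) any valid lens-space $d$-vector for $p=3$ --- of which there are only two up to the symmetries. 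I would present the $V_s$ computation as a short lemma or inline remark referencing the immersed-curve picture of $\overline\gamma$, then close with the finite comparison.
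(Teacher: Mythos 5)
Your overall framework is the paper's: since $r=3$ is prime the reducible surgery must be $L(3,q)\,\#\,Y$ with $Y$ an integer homology sphere, and the contradiction is extracted by comparing $d$-invariant additivity with Proposition \ref{prop:dinvtsurg} and the thin-knot values of the $V_s$. However, the decisive step is misidentified, and as stated it fails. You locate the contradiction in the \emph{size} of the corrections (``$V_0\geq 1$ pushes the $d$-invariants out of the admissible range''), but a uniform shift of the whole $d$-vector by $d(Y)$ is always admissible, and $d(Y)$ is unconstrained a priori; largeness of $V_0$ obstructs nothing. Indeed, for a thin knot with \emph{even} $\tau$ one has $V_0=V_1\geq 1$ and the three numbers $d(L(3,1),[s])-2\max\{V_s,V_{3-s}\}$ form exactly a shift of the $L(3,1)$ $d$-vector, so no contradiction arises from your criterion. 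The actual obstruction, which is the content of the paper's proof, is the \emph{non-uniformity} of the corrections: matching at the self-conjugate $\text{spin}^c$ structure forces $d(Y)=-2V_0$, while matching at the conjugate pair forces $d(Y)=-2V_1$, hence $V_0=V_1$; but for a thin knot this equality holds only when $\tau(K)$ is even (via Proposition \ref{prop:closegens} and $V_s=H_{-s}$, Lemma \ref{lem:HminusV}), and here $\tau(K)=3$ is odd, so $V_0=2\neq 1=V_1$.

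Relatedly, you leave the $V_s$ computation hedged (``$2,1,1$ or $1,1,0$ depending on parity''), but $\tau(K)=3$ is fixed, and the values $V_0=2$, $V_1=V_2=1$ are exactly where the oddness of $\tau$ enters; without pinning these down (or at least the inequality $V_0\neq V_1$), your ``short finite check'' cannot be completed, since the check must detect a \emph{relative} discrepancy between the correction at $[0]$ and at $[\pm 1]$, not an absolute one. The preliminary discussion of Lemma \ref{lem:mainlem4} and the $\text{spin}^c$-labelling bookkeeping is fine but tangential; the fix needed is to replace the ``out of range'' argument with the equality $-2V_0=d(Y)=-2V_1$ and the parity statement for thin knots.
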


\begin{proof} 
If $S^3_3(K)$ is reducible, it must admit an integer homology sphere connected summand $Y$ since $r=3$ is prime. Using the additivity of the $d$-invariants, we have 
\[
d(S^3_3(K), [s]) = d(L(3, \pm 1), [s]) + d(Y).
\]
Proposition \ref{prop:dinvtsurg} then implies $d(Y)=-2V_0(K)=-2V_1(K)$, which in turn forces $V_0(K)=V_1(K)$. However this is true only for thin knots with even $\tau(K)$, which can be seen using the formula in Proposition \ref{prop:closegens} together with $V_s=H_{-s}$. This forms the desired contradiction.
\end{proof}

\begin{lemma}
Let $K$ be a thin knot with $\tau(K) \geq g(K)-2$. Then $S^3_r(K)$ is irreducible when $r=2(g(K)-1)$.
\label{lem:largelem}
\end{lemma}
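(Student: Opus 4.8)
The plan is to assume $S^3_r(K)$ is reducible with $r = 2(g(K)-1)$ and derive a contradiction from absolute gradings, in the spirit of Lemma~\ref{lem:dinvts}. Write $g = g(K)$ and $\tau = \tau(K)$, so that $\tau \in \{g-2, g-1, g\}$. First I would fix the connected-sum decomposition. By the results recalled in the introduction, $S^3_r(K)$ has a nontrivial lens space summand, and by \cite{Gre15} together with Dey's theorem that nontrivial cables are not thin \cite{Dey19}, $S^3_r(K)$ is not a connected sum of two lens spaces. Hence $S^3_r(K) \cong X \# Z$ with $X = L(a,q)$ a nontrivial lens space, $Z$ not a lens space, $|H^2(Z)| = k < \infty$, and, by the K\"unneth formula for $\widehat{\textit{HF}}$, $r = ak$ with $\gcd(a,k) = 1$. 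Lemma~\ref{lem:periodicity} then supplies the periodicity $\widehat{\textit{HF}}(S^3_r(K), [s]) \cong \widehat{\textit{HF}}(S^3_r(K), [s+k])$ of relatively graded vector spaces, hence $MR^{[s]} \cong MR^{[s+k]}$ up to translation, for all $[s]$.

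As observed at the end of Section~\ref{subsec:lemsoup}, every $\text{Width}(MR^{[s]})$ equals $1$ in this range of $\tau$ (and every $MR^{[s]} = \{0\}$ if $K$ is an $L$-space knot), so relative gradings alone do not suffice and I would pass to $d$-invariants. Combining Proposition~\ref{prop:dinvtsurg}, $\text{spin}^c$-conjugation symmetry, and additivity of the correction terms under connected sum gives, for every $s$,
\[
d(Z, [s]_k) = d(L(r,1),[s]) - d(X, [s]_a) - 2\max\{V_s, V_{r-s}\},
\]
where $[s]_a$ and $[s]_k$ denote the images of $[s]$ under the K\"unneth splitting $\Z/r\Z \cong \Z/a\Z \oplus \Z/k\Z$; the key feature is that the left-hand side depends only on $s \bmod k$. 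For the right-hand side I would feed in that for a thin knot the formula for $\widetilde H_s$ in Proposition~\ref{prop:closegens}, together with $V_s = H_{-s}$ and Lemma~\ref{lem:HminusV}, gives $V_s = \lceil (\tau - s)/2\rceil$ for $0 \le s \le \tau$ and $V_s = 0$ for $s \ge \tau$; thus $\max\{V_s, V_{r-s}\}$ is an explicit, unimodal function of $[s] \in \Z/r\Z$ whose largest value $V_0 = \lceil \tau/2\rceil$ is attained only near $[s]=[0]$. I would pair this against the recursively computed $d$-invariants of $L(r,1)$ and of $X$ from \cite[Proposition~4.8]{OS03a}.

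Evaluating the displayed identity at $s = 0$ and at the remaining representatives $k, 2k, \dots, (a-1)k$ of $[0]_k$ and subtracting forces a rigid numerical relation between $V_0$, the small values $\max\{V_{jk}, V_{r-jk}\}$, and differences of lens-space correction terms. Because $r = 2(g-1)$ is even and the $V_s$ sequence is so tightly constrained, I expect this relation to fail once $g$ exceeds a small explicit bound, with the handful of tiny cases --- where $r$ is small and the Matignon--Sayari bound already leaves almost no room --- dispatched by hand; when $a = 2$ this comparison is exactly the obstruction to an $\R P^3$ summand alluded to in the introduction. The genuinely delicate sub-case, and the one I would treat with the most care, is $\tau = g$: here $\widehat{\textit{HF}}$ provides nothing when $K$ is an $L$-space knot, and the value $V_0 = \lceil g/2\rceil$ is of the same order as the lens-space $d$-invariant ``budget'', so the inequalities must be run exactly rather than crudely. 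It is worth recording that a thin $L$-space knot is a $T(2,n)$, for which $S^3_{2(g-1)}(K)$ is directly seen to be irreducible via Moser \cite{Mos71}; for the non-$L$-space thin knots with $\tau = g$ one still has honest nonzero $V_s$, and the comparison above applies.
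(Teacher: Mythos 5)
Your overall strategy---pass from the relative-grading stalemate to correction terms, use additivity of $d$-invariants over the connected sum, and feed in the explicit thin-knot values of the $V_s$---is the right one, but there is a genuine gap exactly where the contradiction is supposed to appear. Evaluating your identity at the representatives $0, k, 2k, \dots, (a-1)k$ of $[0]_k$ cancels the $d(Z,\cdot)$ terms but leaves the differences $d(X,[jk]_a)-d(X,[0]_a)$ for an \emph{unknown} lens space $X=L(a,q)$; you never control these, and the step ``I expect this relation to fail once $g$ exceeds a small explicit bound'' is a hope rather than an argument. The paper's proof is organized precisely to avoid this obstacle: it first shows that the periodicity constant $k$ must be \emph{even}, by comparing multiplicities of $0$'s and $1$'s in $MR^{[s']}$ and $MR^{[s'-k]}$ (possible because $\text{Width}(MR^{[s]})=1$ for every $[s]$ when $\tau(K)\geq g(K)-2$, while $\dim \widehat{\textit{HF}}(S^3_r(K),[s'])>1$). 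With $k$ even, the two self-conjugate structures $[0]$ and $[\tfrac{r}{2}]$ restrict to the same $\text{spin}^c$ structure on the lens space summand, and pairing them with $[\tfrac{k}{2}]$ and $[\tfrac{r+k}{2}]$ makes \emph{all} unknown correction terms of both summands cancel upon summing the two resulting equations. What survives, via Proposition \ref{prop:dinvtsurg} and the closed form of $d(L(r,1),[s])$, is the identity $\tfrac{r-k}{4}=(V_0-V_{\frac{r}{2}})+(V_{\frac{k}{2}}-V_{\frac{r-k}{2}})$, and the contradiction is then an explicit computation with the thin-knot values of the $V$'s together with $k\leq \tfrac{r}{3}$, split into the cases $\tau(K)=g(K)$, $g(K)-1$, $g(K)-2$, with the single residual case $r=\tau(K)=g(K)=2$ ruled out by a multiplicity count comparing $MR^{[0]}$ and $MR^{[1]}$. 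Your route, by contrast, would require uniform control of $d(L(a,q),\cdot)$ for arbitrary $(a,q)$, which is never supplied.

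A secondary problem: to dispose of the $\tau(K)=g(K)$, L-space-knot subcase you assert that a thin L-space knot is a $T(2,n)$, but in this paper that statement is only a conjecture (it is known for alternating knots). No such case split is needed anyway: for a thin knot with $\tau(K)=g(K)$ the $V_s$ are nonzero for $s<\tau(K)$ whether or not $K$ is an L-space knot, and the identity above applies verbatim (this is the paper's Case C1, where $V_{\frac{r}{2}}=1$ and $V_{\frac{k}{2}}-V_{\frac{r-k}{2}}=\tfrac{r}{4}-\tfrac{k}{2}$ force $V_0=\tfrac{k}{4}+1$, which is incompatible with $V_0=\lceil\tau(K)/2\rceil$ and $k\leq\tfrac{r}{3}$ except in the small case handled separately).
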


\begin{proof}
Let $\tau(K) \geq g(K)-2$, and suppose for the sake of contradiction that $S^3_r(K)$ is reducible for $r=2(g(K)-1)$. Then $S^3_r(K)$ admits as connect summands a lens space $Y$ and a summand $Z$ with $|H^2(Z)|=k < \infty$. Since $H_1(S^3_r(K))$ is cyclic and $r$ is even, one of $|H_1(Y)|=\frac{r}{k}$ or $k$ is even. We will show the latter must be true.

Using the immersed curves techniques of the previous section, we see that $\text{Width}(MR^{[s]})$ \\
$=1$ for all $[s]$ when $K$ is thin and $\tau(K) \geq g(K)-2$. Using $s' \equiv g(K)-1 (\,\, \text{mod} \,\, r)$ again, we are guaranteed to have dim $\widehat{\textit{HF}}(S^3_r(K), [s']) > 1$ since $l^{s'}_r$ either intersects a simple figure-eight at height $g(K)-1$ when $\tau(K) < g(K)$ or intersects $\overline{\gamma}$ multiple times when $\tau(K)=g(K)$. In order for some $[s'-k]$ to satisfy $MR^{[s'-k]} \cong MR^{[s']}$ up to translation, we also require $\text{parity}(s'-k) = \text{parity}(s')$ so that the multiplicities of 0 and 1 agree. This implies $k$ is even.

Let $\pi_Y([s])$ and $\pi_Z([s])$ denote the restrictions of $[s]$ to $\text{Spin}^{c}(Y)$ and $\text{Spin}^{c}(Z)$, respectively. Since $\text{Spin}^{c}(S^3_r(K)) \cong \Z/r\Z$ is $\Z/r\Z$-equivariant \cite{OS08}, we have both
\[
\pi_Y([s+\tfrac{r}{k})]) = \pi_Y([s]) \,\, \text{and} \,\, \pi_Z([s+k]) = \pi_Z([s]).
\]

\noindent The two self-conjugate $\text{spin}^{c}$ structures of $S^3_r(K)$ must project onto the lone self-conjugate structure of $L(\tfrac{r}{k}, q)$, and so 
\[
\pi_Y([0])=\pi_Y([\tfrac{r}{2}]) \in \text{Spin}^{c}(L(\tfrac{r}{k}, q)).
\]

\noindent Their respective restrictions on $Z$ are distinct, and so let $\pi_Z([0]) = u_e$ and $\pi_Z([\tfrac{r}{k}]) = u_o$ (subscripts indicate parity of the $\text{spin}^{c}$ structure before restriction). Due to the additivity of $d$-invariants, we have
\[
d(S^3_r(K), [s]) = d(L(\tfrac{r}{k}, q), \pi_Y([s])) + d(Z, \pi_Z([s])).
\]

\noindent Since $k$ is even, we may apply this to the self-conjugate structures to see

\begin{align*}
d(S^3_r(K), [0])-d(S^3_r(K), [\tfrac{r}{2}]) &= (d(L(\tfrac{r}{k}, q), [0]) + d(Z, u_e)) - (d(L(\tfrac{r}{k}, q), [0]) + d(Z, u_o)) \\
&= d(Z, u_e)-d(Z, u_o).
\end{align*}

\noindent Observe that $\pi_Z([\tfrac{k}{2}]) = u_0$, and so $\pi_Z([\tfrac{r+k}{2}]) = u_e$. We likewise have $\pi_Y([\tfrac{k}{2}])=\pi_Y([\tfrac{r+k}{2}])$, and so
\[
d(S^3_r(K), [\tfrac{k}{2}])-d(S^3_r(K), [\tfrac{r+k}{2}]) = d(Z, u_o)-d(Z, u_e).
\]

Using the inductive formula for $d(L(p,q))$ \cite[Proposition 4.8]{OS03a}, it follows that 
\[
d(L(r,1),[s]) = \tfrac{s^2}{r} - s + \tfrac{r-1}{4}. 
\]
Summing the prior two equations and using Proposition \ref{prop:dinvtsurg} (with $V_{\frac{r-k}{2}} = \text{max}\{V_{\frac{r+k}{2}}, V_{r-\frac{r+k}{2}}\}$) yields
\begin{align*}
2\left(V_0 - V_{\frac{r}{2}} + V_{\frac{k}{2}} - V_{\frac{r-k}{2}}\right) &= d(L(r,1),[0]) - d(L(r,1),[\tfrac{r}{2}]) + d(L(r,1),[\tfrac{k}{2}]) \\ 
& \,\,\,\,\,\,\,\,\,\,\,\,\,\,\,\,\,\,\,\,\,\,\,\,\,\,\,\,\,\,\,\,\,\,\,\,\,\,\,\,\,\,\,\,\,\,\,\,\,\,\,\,\,\,\,\,\,\,\,\,\,\,\,\,\,\,\,\,\,\,\,\,\,\,\,\,\, - d(L(r,1),[\tfrac{r+k}{2}]) \\
&= -\left(\frac{r^2}{4r}-\frac{r}{2}\right) + \left(\frac{k^2}{4r}-\frac{k}{2}\right) - \left(\frac{(r+k)^2}{4r}-\frac{r+k}{2}\right)\\
&= \frac{r-k}{2},
\end{align*}
Therefore, we have the following relationship between $r$, $k$, and the $V$'s associated to $K$:
\begin{equation}
\frac{r-k}{4} = (V_0 - V_{\frac{r}{2}}) + (V_{\frac{k}{2}} - V_{\frac{r-k}{2}}).
\label{eq:rkVEq}
\end{equation}

\noindent Notice that when $K$ is thin and $\tau(K) \geq 0$, we have that
\begin{align*}
V_0 = \left\{
	\begin{array}{l c}
	\dfrac{\tau(K)+1}{2} & \text{if parity}(\tau(K))=1,\\
	\dfrac{\tau(K)}{2} & \text{if parity}(\tau(K))=0.\\
	\end{array}
	\right.
\end{align*}

We will use this to generate contradictions, and break into cases since the values of $V_{\frac{r}{2}}$ and $V_{\frac{r-k}{2}}$ depend on $\tau(K)$. It will also be useful to use the fact that $k \leq \frac{r}{3}$.\\

\noindent \textbf{Case C1: $\mathbf{\tau(K) = g(K)}$}. Here we have $V_{\frac{r}{2}} = 1$ and $V_{\frac{r-k}{2}} > 0$. We see that $V_{\frac{k}{2}}-V_{\frac{r-k}{2}}$ is given by half the distance between $\frac{r-k}{2} - \frac{k}{2}$ since $K$ is thin. Thus, 
\[
V_{\frac{k}{2}}-V_{\frac{r-k}{2}} = \frac{r}{4}-\frac{k}{2}.
\]

\noindent This together with Equation \ref{eq:rkVEq} above then yields
\[
V_0 = \frac{k}{4} + 1.
\]

\noindent If $\text{parity}(\tau(K))=1$, then we have
\begin{align*}
& \,\,\,\,\,\,\,\,\,\, \frac{\tau(K)+1}{2} = \frac{k}{4} + 1\\
&\Leftrightarrow \frac{\tau(K)-1}{2} = \frac{k}{4}\\
&\Leftrightarrow \frac{\tau(K)-1}{2} \leq \frac{r}{12}\\
&\Leftrightarrow 6(\tau(K)-1) \leq 2(g(K)-1)\\
&\Leftrightarrow 6(\tau(K)-1) \leq 2(\tau(K)-1)\\
&\Rightarrow 4\tau(K) \leq 4.
\end{align*}

\noindent However this would imply $g(K) = \tau(K) = 1 \Rightarrow r = 0$, a clear contradiction. If $\text{parity}(\tau(K))=0$, then similar reasoning yields $\tau(K) \leq 2$, which forces $r=\tau(K)=g(K)=2$. We return to immersed curves techniques to rule out this case by comparing the multiplicity of elements of $MR^{[0]}$ and $MR^{[1]}$. Since $\text{parity}(\tau(K))=0$, we must translate $MR^{[0]}$ by one so that $0$ is its smallest element. The multiplicity of $0$ in the translated $MR^{[0]}$ is $e_0$, and the multiplicity of $0$ in $MR^{[1]}$ is $2e_1+2$. However if $S^3_2(K)$ is reducible then Lemma \ref{lem:periodicity} forces $e_0=2e_1+1$ in order for dim $\widehat{\textit{HF}}(S^3_2(K), [1]) =$ dim $\widehat{\textit{HF}}(S^3_2(K), [0])$, generating the desired contradiction. \\
\medskip

\noindent \textbf{Case C2: $\mathbf{\tau(K) = g(K)-1}$}. Once again $V_{\frac{r-k}{2}} > 0$, and in this case we obtain $V_0 = \frac{k}{4}$ since $V_{\frac{r}{2}}=0$. Together with $r=2\tau(K)$, the argument of the previous case yields the contradiction $4\tau(K) \leq -4$ when $\text{parity}(\tau(K))=1$ or $4\tau(K) \leq 2$ when $\text{parity}(\tau(K)) = 0$.\\

\noindent \textbf{Case C3: $\mathbf{\tau(K) = g(K)-2}$}. We still have $V_{\frac{r}{2}}=0$, and things are more interesting here since it is possible for $V_{\frac{r-k}{2}}=0$. This happens only if $k=2$, in which case Equation \ref{eq:rkVEq} becomes
\[
\frac{r-2}{4} = V_0+V_1.
\]

\noindent Curiously enough $\tau(K) = V_0+V_1$ for a thin knot, and so this would force $\tau(K) = \frac{r-2}{4} = \frac{2(\tau(K)+1)-2}{4} \Leftrightarrow 4\tau(K) = 2\tau(K) \Rightarrow \tau(K)=0$. However this forces $r=k$, a contradiction. Then we cannot have $k=2$, and so $V_{\frac{r-k}{2}} > 0$ and we once again have $V_0 = \frac{k}{4}$. As with the previous cases, having $r=2(\tau(K)+1)$ would yield the contradictions $6(\tau(K)+1) \leq 2(\tau(K)+1)$ if $\text{parity}(\tau(K)) = 1$ or $4\tau(K) \leq 2$ if $\text{parity}(\tau(K))=0$. This completes the proof.
\end{proof}

\subsection{Proof of Theorem \ref{thm:main}}

\begin{proof}
Suppose $S^3_r(K)$ is reducible for $K$ thin and hyperbolic. The Matignon-Sayari bound implies that $1 < r \leq 2g(K)-1$ \cite[Theorem 1.1]{MS03}, after mirroring the knot if necessary to make the surgery slope positive. Reducibility also gives $S^3_r(K) \cong Y \# Z$ for some lens space $Y$ and some $Z$ with $|H^2(Z)| = k < \infty$. By Lemma \ref{lem:periodicity}, we have $\widehat{\textit{HF}}(S^3_r(K), [s+\alpha k]) \cong \widehat{\textit{HF}}(S^3_r(K), [s])$ for arbitrary $[s], \alpha \in \Z/r\Z$. When $r < 2(g(K)-1)$, Lemmas \ref{lem:mainlem1}, \ref{lem:mainlem3}, and \ref{lem:mainlem4} apply to show that there exists an $[s'] \in \text{Spin}^{c}(S^3_r(K))$ such that either $\widehat{\textit{HF}}(S^3_r(K), [s'])$ is relatively-graded isomorphic only to $\widehat{\textit{HF}}(S^3_r(K), [-s'])$, or $\tau(K)=g(K)=r=3$. The latter is prevented by Lemma \ref{lem:dinvts}, so we proceed with the former. Since $Y \not \cong S^3$, we see that $[s']$ cannot be self-conjugate and also that $|H_1(Y)| = 2$. This implies $Y = \R P^3$, as well as $k = |[s']-[-s']| = 2|[s']|$. However, together this means 4 divides $r$, which is impossible when $S^3_r(K)$ admits an $\R P^3$ summand with $H_1(S^3_r(K))$ cyclic.

Therefore, we must have $r \geq 2(g(K)-1)$. Lemmas \ref{lem:mainlem2t} and \ref{lem:mainlem2n} cover $-g(K) < \tau(K) < g(K) - 2$ and Lemma \ref{lem:largelem} covers $\tau(K) \geq g(K)-2$ for the possibility that $r=2(g(K)-1)$, and so we must have $r=2g(K)-1$. In this situation $k$ is odd since $r$ is odd, which means periodicity will cycle through $\text{spin}^{c}$ structures with different parities. Then Lemmas \ref{lem:mainlem2p}, \ref{lem:mainlem2n}, and \ref{lem:mainlem3} apply to fully obstruct reducibility via the above argument if $\tau(K) \neq g(K)$. If $\tau(K)=g(K)$, our techniques have been exhausted and leave just the conclusion of Lemma \ref{lem:mainlem5}, showing that $K$ must be an $L$-space knot.
\end{proof}

\bibliographystyle{alpha}
\bibliography{Thin_knots_and_the_Cabling_Conjecture}

\end{document}